\documentclass[preprint,12pt]{elsarticle}
\usepackage{mathrsfs}
\usepackage{amssymb}
\usepackage{amsmath,amscd}
\usepackage{amsfonts,amssymb}
\usepackage{latexsym}
\usepackage{amsbsy}
\usepackage{graphicx}
\usepackage{overpic}
\usepackage{mathdots}
\usepackage[all]{xy}
\usepackage{multirow}
\usepackage{tikz-cd}
\usepackage{lmodern}
\usepackage{blkarray, bigstrut}
\usepackage{booktabs}
\usepackage{diagbox}
\usepackage{arydshln}
\usepackage{changepage}
\usepackage{mathtools}
\usepackage{xcolor}
\newtheorem{thm}{Theorem}[section]
\newtheorem{theo}[thm]{Theorem}
\newtheorem{defn}[thm]{Definition}
\newtheorem{lem}[thm]{Lemma}
\newtheorem{prop}[thm]{Proposition}
\newtheorem{coro}[thm]{Corollary}
\newtheorem{claim}[thm]{Claim}
\newtheorem{exam}{Example}[section]
\newtheorem{examp}[exam]{Example}

\setcounter{tocdepth}{1}
\DeclareMathOperator{\rank}{rank}
\DeclareMathOperator{\ind}{ind}
\DeclareMathOperator{\initial}{in}
\DeclareMathOperator{\codim}{codim}
\DeclareMathOperator{\sing}{Sing}
\newenvironment{proof}{\noindent\textbf{proof.}}{\hfill$\square$\par}

\numberwithin{equation}{section}



\journal{-}

\begin{document}
\begin{frontmatter}
\title{Every type A quiver locus is a Kazhdan-Lusztig variety}
\author[1]{Jiajun Xu}
\author[1]{Guanglian Zhang\corref{cor1}}
\affiliation[1]{organization={School of Mathematical Sciences, Shanghai Jiao Tong University},
            city={Shanghai},
            postcode={200240},
            country={China}}

\cortext[cor1]{Corresponding author\\Email: s1gh1995@sjtu.edu.cn (Jiajun Xu), g.l.zhang@sjtu.edu.cn (Guanglian Zhang)\\
The work was supported by National Natural Science Foundation of China  (Grant No.12271347).}


\begin{abstract}
The Zariski closures of the orbits for representations of type A Dynkin quivers under the action of general linear groups (i.e. quiver loci) exhibit a profound connection with Schubert varieties. In this paper, we present a scheme-theoretical isomorphism between a type A quiver locus and the intersection of an opposite Schubert cell and a Schubert variety, also known as a Kazhdan-Lusztig variety in geometric representation theory. Our results generalize and unify the Zelevinsky maps for equioriented type A quiver loci and bipartite type A quiver loci, as presented respectively by A. V. Zelevinsky in 1985 and by R. Kinser and J. Rajchgot in 2015. Through this isomorphism, we establish a direct and natural connection between type A quiver loci and Schubert varieties. We compute an explicit relationship between Zelevinsky permutations and the indecomposable factors of the corresponding representations. Additionally, we present three applications of our isomorphism with examples in order to justify its further potentials.

\end{abstract}
\begin{keyword}
Quiver locus \sep partial flag variety \sep Schubert variety \sep Kazhdan-Lusztig variety
\MSC 14M15 \sep 16G20 \sep 05E10

\end{keyword}

\end{frontmatter}

\section{Introduction}

Quiver loci are the orbit closures of quiver representations under the action of general linear groups. In recent decades, significant connections have been revealed between the geometry of the quiver loci for Dynkin quivers and that of Schubert varieties. In 1985, A. V. Zelevinsky established a bijection, known as the Zelevinsky map, between the quiver locus of equioriented type A quiver and an open dense subvariety in Schubert variety \cite{zelevinskii_two_1985}, in order to address the cohomology problem. Lusztig also presented a similar viewpoint for cyclic quivers in the 1980s to prove the existence of canonical bases for quantum groups \cite{lusztig1990cir}. Lakshmibai and Peter Magyar subsequently conducted a detailed study of the Zelevinsky map, emphasizing that it is a scheme-theoretical isomorphism\cite{lakdegesche1998}. After that, the equivalence between the singularity properties of quiver loci and Schubert varieties was further explored in many aspects \cite{bobinski_normality_2001}, \cite{bobinski_schubert_2002}. In 2015, Ryan Kinser and Jenna Rajchgot extended the Zelevinsky map from equioriented type A quivers to bipartite type A quivers, providing a richer pattern for studying the relationship between type A quiver loci and Schubert/Kazhdan-Lusztig varieties. They showed that every quiver locus of bipartite type A quiver is scheme-theoretically isomorphic to a Kazhdan-Lusztig variety, which is an intersection of a Schubert variety and an opposite Schubert cell. Consequently, it follows that the type A quiver locus of arbitrary orientation must be isomorphic to an open subvariety of some Kazhdan-Lusztig variety, up to a factor of product of general linear groups: in order to facilitate the comparison of our new results with this approach in the rest of this paper, we refer to it as the K-R's approach. Ryan Kinser and Jenna Rajchgot also generalized their results to type D quivers in 2021\cite{kinser_type_2021}. In our paper, we aim to construct a scheme-theoretical isomorphism from the type A quiver locus to a Kazhdan-Lusztig variety directly. Moreover, this isomorphism is expected to unify the Zelevsinky maps for equioriented and bipartite type A quivers.
$$\begin{tikzcd}
  Q=1 \arrow[r, "\beta_1"'] & 2 \arrow[r, "\beta_2"'] & 3 & 4 \arrow[l,"\alpha_1"] & 5 \arrow[l,"\alpha_2"] \arrow[r,"\beta_3"'] & 6& 7 \arrow[l,"\alpha_3"]\end{tikzcd}$$

Let $Q=(Q_0,Q_1,s,t)$ be a Dynkin type A quiver, where $Q_0$ is the set of vertices (let $Q_0=\{1,2,\cdots,n\}$ if $Q$ has $n$ vertices), $Q_1$ is the set of arrows, and $s,t:Q_1\to Q_0$ are maps from arrows to their start and target points. Given the dimension vector $\mathbf{d}=(d_x)_{x\in Q_0}$ of $Q$, the representation space $rep_Q(\mathbf{d})$ is an affine space (over a algebraically closed field $k$, the same below). The algebraic group $\mathrm{GL}(\mathbf{d})=\prod_{x\in Q_0}\mathrm{GL}(d_x)$ acts on $rep_Q(\mathbf{d})$ and for any $V\in rep_Q(\mathbf{d})$ we denote the $\mathrm{GL}(\mathbf{d})$-orbit of $V$ by $\mathcal{O}_V$. Its Zariski closure $\overline{\mathcal{O}_V}$ in $rep_Q(\mathbf{d})$ is the quiver locus. Let $G=\mathrm{GL}(N)$, where $N=\sum_{x\in Q_0} d_x$, and take $B$ and $B^-$ to be the upper and lower triangular Borel subgroup of $G$ respectively. Our main theorem is as follows.

\begin{theo}[Theorem \ref{MAIN}]\label{introtheo1}
  For any type A quiver $Q$ with dimension vector $\mathbf{d}$, there is a Zelevinsky map $$\zeta_Q:rep_Q(\mathbf{d})\to B^-v_QP_Q/P_Q,$$ where $v_Q$ is a permutation in the Weyl group $W(G)\cong S_N$ of $G$, and $P_Q$ is a block upper triangular parabolic subgroup of $G$ containing $B$. Both of $v_Q$ and $P_Q$ depend only on $Q$ and $\mathbf{d}$, and the restriction of $\zeta_Q$ on $\overline{\mathcal{O}_V}$ is a scheme-theoretical isomorphism onto a Kazhdan-Lusztig variety $\overline{BwP_Q/P_Q}\cap B^-v_QP_Q/P_Q$, where $w$ depends on $V$ (that is, $w_Q(\mathbf{r})$ in Theorem \ref{MAIN}, called the Zelevinsky permutation).
\end{theo}

This result provides a straightforward interpretation of the close connection between quiver loci and Schubert/Kazhdan-Lusztig varieties, and in our paper we also point out several important concepts among the immediate corollaries. Firstly, the normality, Cohen-Macaulay property, and rational singularity property of type A quiver loci can be deduced, recovering the works of Bobiński and Zwara in \cite{bobinski_normality_2001}, as well as those of Kinser and Rajchgot in \cite{kinser_type_2015}. Furthermore, our construction may also enhance existing proofs in problems related to the theory of representations of type A quiver. For example, the vanishing of intersection cohomology of type A quiver loci in odd degrees and existence of canonical bases of quantum groups, etc.(cf.\cite{zelevinskii_two_1985},\cite{lusztigcanbase},\cite{lusztig1990cir}). Our result revisits these proofs with a more natural interpretation and avoids some lengthy steps.

Moreover, in future research, this construction allows us to parallel the research methods of both sides, and quickly utilize certain mature techniques from one side to deal with difficult problems on the other side. To achieve this, we need more precise information about Zelevinsky permutations to address the requirements in detailed calculations. We provide a direct and clear connection between the combinatorial data of Zelevinsky permutations and the indecomposable multiplicities of quiver representations, as follows. In fact, this connection also serves as a preparation (see discussion in Section \ref{singloc}) for our future studies.

\begin{theo}[Theorem \ref{m-r}]\label{introtheo2}
  Given type A quiver $Q$ with $n$ vertices and $V\in rep_Q(\mathbf{d})$, we assume $V=\oplus m_{pq}I_{pq}$, where $I_{pq},1\leq p\leq q\leq n$ are the indecomposable representations of $Q$. Let $w$ be the Zelevinsky permutation corresponding to $V$, i.e. $\overline{\mathcal{O}_V}\cong \overline{BwP_Q/P_Q}\cap B^-v_QP_Q/P_Q$. For permutation matrix $w$, there exists a suitable (see Definition \ref{defBM} and Section \ref{constructionofthemapsection}) block partitioning of $w$ that depends only on $Q$, and each block row and block column is labeled by some vertex in $Q_0=\{1,2,\cdots,n\}$, such that the number of non-zero elements of $w$ in the block at the row labeled by vertex $x$ and the column labeled by vertex $y$ is $$\left\{\begin{array}{ll}
                                      m_{yx} &,\mbox{ if }y\leq x  \\
                                      n_{xy}&,\mbox{ if }y=x+1 \\
                                      0 &,\mbox{ else}
                                    \end{array}\right.$$where $n_{xy}=\sum_{p\leq x<y\leq q} m_{pq}$.
\end{theo}

Of course, we should also emphasize the explanation of how this technique expands upon the existing literature. K-R's approach has also produced some results for arbitrary oriented A-type quivers in the past (see \cite{kinser_type_2015}, \cite{threeformulas}). There is a great deal of potential research that can stem from establishing direct isomorphisms and having explicit local equations instead of just an open immersion. Therefore, exploring these applications will further justify our approach.

To illustrate the application of our results, we first focus on the verification of the geometric vertex decomposability of the defining ideal of arbitrary oriented type A quiver loci. This application heavily relies on the selection of local equations, giving our results a significant advantage in this regard. Klein and Rajchgot (\cite{GVDandliaison}, 2016) have previously proven the geometric vertex decomposability of the defining ideal of equioriented type A quiver loci. Using our direct identification of this ideal of arbitrary oriented quivers to a Kazhdan-Lusztig ideal (Section \ref{G/BKL}), we can provide a simpler proof for the geometric vertex decomposability of arbitrary oriented quivers (Section \ref{GVD}).

Moreover, our approach also has advantages in solving problems that require specific details in calculations, such as checking the smoothness of type A quiver loci at specific points or computing the singular loci. Due to space limitations and the coherence of our logical presentation, we will only provide a weak criterion in this paper and illustrate the applications of our results with some examples (Section \ref{singloc}). In Section \ref{G/BKL}, we highlight a perspective of achieving isomorphisms using full flags. Then combining this perspective with the work of \cite{maxsingSL/B}, the calculations of singular loci of type A quiver loci can also be completed. However, this rough combination may overlook some integral information, such as the information provided in Theorem \ref{introtheo2}. Therefore, better results require further research and summarization, which we will present in a new article.

Moreover, our approach has advantages in solving problems that require specific details in calculations, for example checking the smoothness of type A quiver loci at specific points or computing the singular loci. Due to space limitations and coherence of logic, we will only provide a weak criterion in this paper and illustrate the power of our results with some examples in Section \ref{singloc}. In Section \ref{G/BKL}, we point out a perspective of achieving isomorphisms using full flags, and combining with the work of \cite{maxsingSL/B}, calculations of singular loci of type A quiver loci can also be completed. However, this rough combination will overlook some information that is preserved integrally, such as the information in Theorem \ref{introtheo2}. Therefore, better results require further research and summarization, which we will present in a new article.

Another aspect that can demonstrate the application potential of our results is its generalization to type D quivers, as discussed in Section \ref{gentoD}. It provides a novel non-trivial isomorphism method that exhibits higher consistency with type A isomorphisms and holds promise for generalization to other Dynkin types.

The rest of this paper is organized as follows.

In Section 2, we introduce some basic notations and preliminaries for the type A quiver loci, particularly the characterization of its degeneration in rank parameters.

In Section 3, we discuss the Kazhdan-Lusztig variety in an opposite Schubert cell and cite the important defining ideal of such variety in affine neighborhood. Section 2 and 3 serves as preparations and do not present any new results.

In Section 4, we present the construction of our Zelevinsky map for arbitrary oriented type A quiver loci. The main theorem is Theorem \ref{MAIN}, with Theorem \ref{main} as a central step. We prove that our construction provides a scheme-theoretical isomorphism between any type A quiver loci and certain Kazhdan-Lusztig varieties. Moreover, some immediate corollaries in geometric representation theory are deduced.

In Section 5, we compute the explicit relationship between Zelevinsky permutations and the indecomposable factors of the corresponding representations. This result provides important combinatorial data on Zelevinsky permutations and serves as preparation for future works.

In Section 6, we first discuss the view of achieving the Zelevinsky isomorphism through full flag varieties (Section \ref{G/BKL}). Then, in Section \ref{GVD}, we check the geometric vertex decomposability of the defining ideal of type A quiver loci as an application of our new isomorphism. In Section \ref{singloc}, we study the singularities of type A quiver loci and propose a weak criterion for the smoothness of type A quiver loci at specific points. We also claim a stronger criterion without proof due to space limitations. Finally, in Section \ref{gentoD}, we discuss the generalization of our isomorphism to type D quivers. Sections \ref{singloc} and \ref{gentoD} mainly serve to demonstrate the application direction of our main Theorem \ref{introtheo1}, and their complete content will be developed in future articles. However, the examples provided in these parts are sufficient to indicate the prospect of further research.

\section{Type A quiver loci}
For a quiver $Q=(Q_0,Q_1,s,t)$, its representation space $rep_Q(\mathbf{d})$ for a fixed dimension vector $\mathbf{d}$ can be identified as an affine matrix space $$rep_Q(\mathbf{d})\cong \prod_{\alpha\in Q_1} \mathbb{A}^{d_{s(\alpha)}\times d_{t(\alpha)}},$$
any representation in $rep_Q(\mathbf{d})$ can be denoted by $V=(V_\alpha)_{\alpha\in Q_1}$. Here, the matrix $V_\alpha$ corresponds to the arrow $\alpha$ in the quiver and has dimensions $d_{s(\alpha)}\times d_{t(\alpha)}$. Without loss of generality, we always assume $d_x\neq 0$ for all $x\in Q_0$.

Let $\mathrm{GL}(\mathbf{d})=\prod_{x\in Q_0} \mathrm{GL}(d_x)$, it acts on $rep_Q(\mathbf{d})$ by
$g.V=(g_{t(\alpha )}V_\alpha g^{-1}_{s(\alpha)})_{\alpha\in Q_1}$

The Zariski closure of a $\mathrm{GL}(\mathbf{d})$-orbit in $rep_Q(\mathbf{d})$ is called a quiver locus. If $Q$ is a Dynkin type A quiver, such closure variety is called type A quiver locus.

For a quiver $Q$ of Dynkin type A, we index the vertices as $1,2,...,n$, where vertex 1 represents the leftmost vertex, and vertex $n$ represents the rightmost vertex (i.e. we set the left and right directions artificially). The arrows in $Q$ can be classified as left arrows or right arrows, and we denote the left arrows by $\alpha_i$ and the right arrows by $\beta_i$ (where the value of $i$ always increases from left to right). A representation of $Q$ is denoted by $V=(A_\cdot ,B_\cdot)$, where the maps on the left arrows $\alpha_i$ (resp. right arrows $\beta_i$) are represented by $A_i$ (resp. $B_i$). Please see Example \ref{ex1} and Example \ref{Zex} for further clarification.

The source and sink vertices are called critical points in $Q$. We also denote these critical points by $s_1,s_2,\cdots$ from left to right, see also Example \ref{Zex}.
\subsection{Degeneration of type A quiver loci}

Given a quiver $Q$, consider any integers $1\leq a<b\leq n$, we say it determines an interval $[a,b]$ of $Q$ and write $[a,b]\subset Q$.  We define $Q_{[a,b]}$ to be the subquiver obtained by cutting $Q$ from $a$ to $b$. Now for $[a,b]$ an interval of $Q$, let $Sou_{[a,b]}$ be the set of source vertices in $Q_{[a,b]}$ and $Sin_{[a,b]}$ be the set of sinks in $Q_{[a,b]}$, note that these two sets contains not only the critical points of $Q$ but also the boundary vertices $a$ or $b$.

Given any representation $V$ with dimension vector $\mathbf{d}$, we can define $$\varphi^V_{[a,b]}:\mathop{\oplus}\limits_{x\in Sou_{[a,b]}}V_x\to\mathop{\oplus}\limits_{y\in Sin_{[a,b]}}V_y, $$
whose components are given by$$\begin{array}{ccc}V_{x_1}\oplus V_{x_2}&\to& V_y\\(v_1,v_2)&\mapsto &V_{P_1}v_1+V_{P_2}v_2\end{array}$$
where $x_1,x_2$ are two sources adjacent to sink $y$, $P_i (i=1,2)$ denotes the path from $x_i$ to $y$ , and $V_{P_i}$ is the composition of all the matrices placed on the path $P_i$, i.e., the composition of all the maps $V_{\alpha}$ going from the source $x_i$ to the sink $y$ (in the obvious order).

Let $r_{[a,b]}(V)=\rank \varphi_{[a,b]}^V$. There is a classic result for the degenerations of type A quiver loci.

\begin{theo}[cf.\cite{degenerationofA}]
  $V,W$ are in the same $\mathrm{GL}(\mathbf{d})$-orbit if and only if $r_{[a,b]}(V)=r_{[a,b]}(W)$ for all interval $[a,b]$ of $Q$. Additionally, $V$ lies in the closure of orbit of $W$ if and only if $r_{[a,b]}(V)\leq r_{[a,b]}(W)$ for all interval $[a,b]$ of $Q$.
\end{theo}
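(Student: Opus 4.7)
The plan is to leverage Gabriel's theorem for type $A$ Dynkin quivers, which classifies the indecomposable representations by intervals: for each interval $[c,d] \subseteq Q$, there is a unique (up to isomorphism) indecomposable $M_{[c,d]}$ supported on $[c,d]$, one-dimensional at each vertex of $[c,d]$ with identity maps on every arrow inside $[c,d]$. By Krull--Schmidt, any $V \in rep_Q(\mathbf{d})$ decomposes uniquely as
\[
V \cong \bigoplus_{[c,d] \subseteq Q} M_{[c,d]}^{m_{[c,d]}(V)},
\]
and two representations lie in the same $\mathrm{GL}(\mathbf{d})$-orbit if and only if their multiplicity vectors $m_{\bullet}(V)$ coincide.

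First I would compute the rank functions $r_{[a,b]}$ on indecomposables: a direct inspection of $\varphi^V_{[a,b]}$ when $V = M_{[c,d]}$ shows that the matrix has rank $1$ precisely when $[a,b] \subseteq [c,d]$, and vanishes otherwise. Since $r_{[a,b]}$ is additive on direct sums, this gives the triangular relation
\[
r_{[a,b]}(V) = \sum_{[c,d] \supseteq [a,b]} m_{[c,d]}(V).
\]
M\"obius inversion on the poset of intervals ordered by inclusion then recovers the entire multiplicity vector from the tuple $\bigl(r_{[a,b]}(V)\bigr)_{[a,b]\subseteq Q}$, which proves the first equivalence.

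For the orbit-closure statement, the forward direction follows from semicontinuity of rank: the locus $\{V : \rank \varphi_{[a,b]}^V \geq k\}$ is Zariski-open in $rep_Q(\mathbf{d})$, so passing to $\overline{\mathrm{GL}(\mathbf{d})\cdot W}$ can only decrease the rank functions. The converse requires constructing explicit one-parameter degenerations. My plan is to show that the closure order on orbits is generated by elementary moves of the form $M_{[a,d]} \oplus M_{[b,c]} \rightsquigarrow M_{[a,c]} \oplus M_{[b,d]}$ for $a \leq b \leq c \leq d$, each of which preserves the dimension vector and can be realized as an $\mathbb{A}^1$-specialization via a standard $t \to 0$ family that rescales the glue between the two summands. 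Induction on a suitable combinatorial statistic measuring the difference between $m_\bullet(V)$ and $m_\bullet(W)$ then patches these moves together.

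The main obstacle will be the combinatorial step of proving that if $r_{[a,b]}(V) \leq r_{[a,b]}(W)$ for every interval, then the decomposition of $W$ can actually be transformed into that of $V$ by a sequence of elementary moves of the above form while preserving the dimension vector throughout. This amounts to a careful analysis of the difference $m_\bullet(W) - m_\bullet(V)$ and selecting, at each step, a move that strictly decreases some well-chosen statistic (for instance, a weighted count of "crossings" among intervals); once this combinatorial reduction is in place, concatenation of the individual one-parameter families yields the required degeneration. Alternatively, one may simply invoke \cite{degenerationofA} directly to bypass this bookkeeping, since the statement is only used here as a preparatory input.
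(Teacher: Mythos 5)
The paper does not prove this theorem; it is quoted verbatim with a citation to \cite{degenerationofA}, so there is no in-paper proof to compare against. Your proposal therefore has to stand on its own, and its central step fails as soon as $Q$ is not equioriented. The formula you assert,
\[
r_{[a,b]}(V) = \sum_{[c,d] \supseteq [a,b]} m_{[c,d]}(V),
\]
is wrong in general. Take $A_3$ with orientation $1 \to 2 \leftarrow 3$ and $V = M_{[1,2]}$ (so $V_1=V_2=k$, $V_3=0$). For the interval $[1,3]$ we have $Sou_{[1,3]}=\{1,3\}$ and $Sin_{[1,3]}=\{2\}$, and $\varphi^V_{[1,3]}\colon k\oplus 0 \to k$ is $(v_1,v_3)\mapsto v_1$, which has rank $1$, even though $[1,3]\not\subseteq[1,2]$. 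The point is that once $Q_{[a,b]}$ has several sources or sinks, an indecomposable $M_{[c,d]}$ supported strictly inside $[a,b]$ can still produce a nonzero component of $\varphi^V_{[a,b]}$; the rank $r_{[a,b]}(M_{[c,d]})$ is $1$ precisely when $[c,d]$ contains a consecutive source--sink pair of $Q_{[a,b]}$ (together with the path between them), not when $[c,d]\supseteq[a,b]$. Consequently the inversion recovering $m_\bullet$ from $r_\bullet$ is not M\"obius inversion on the interval poset; the correct inversion (quoted later in the paper as Proposition~\ref{m-r}) carries orientation-dependent signs $(-1)^{b-a}$ indexed by the critical points $s_a$, and your ``triangular relation'' needs to be replaced by the correct one before any inversion argument can be made.

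For the orbit-closure direction your high-level plan -- realize the codimension-one degenerations by short exact sequences between interval modules and chain them together -- is conceptually the right idea (this is how Abeasis--Del Fra and Bongartz treat Dynkin degenerations). But as written it is only a sketch: the specific elementary move $M_{[a,d]}\oplus M_{[b,c]}\rightsquigarrow M_{[a,c]}\oplus M_{[b,d]}$ is not always realized by a short exact sequence (which ones occur depends on the orientation along $[a,d]$), and the ``well-chosen statistic'' that is supposed to drive the induction is never specified. You acknowledge this by offering to cite \cite{degenerationofA}, which is exactly what the paper does; but then the proposal has not actually supplied a proof. The one concrete, fully stated step in your argument -- the formula for $r_{[a,b]}(M_{[c,d]})$ and the resulting inversion -- is the step that is incorrect.
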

Based on this theorem, we can characterize the orbits and orbit closures using \textbf{rank parameters}. Given a vector of integers $\mathbf{r}=(r_{[a,b]})_{[a,b]\subset Q}$, if there exists a representation $V\in rep_Q(\mathbf{d})$ such that $r_{[a,b]}=(r_{[a,b]}(V)$ for all intervals $[a,b]$ of $Q$, we say $\mathbf{r}$ is a \textbf{rank parameter}, and denote $\mathcal{O}_\mathbf{r}=\mathcal{O}_V$ as the orbit of $V$. We say two rank parameters $\mathbf{r'}\leq \mathbf{r}$ if and only if $\mathcal{O}_{\mathbf{r'}}\subset \overline{\mathcal{O}_{\mathbf{r}}}$.
\begin{examp}\label{ex1}
  Consider the quiver and representation of quiver given by the following data($1,2,\cdots$ are indexes of vertices, not dimension vector):
  $$\begin{tikzcd}
  V=V_1 \arrow[r, "B_1","\beta_1"'] & V_2 \arrow[r, "B_2","\beta_2"'] & V_3 & V_4 \arrow[l,"\alpha_1","A_1"'] & V_5 \arrow[l,"\alpha_2", "A_2"'] \arrow[r,"\beta_3"', "B_3"] & V_6& V_7 \arrow[l,"\alpha_3","A_3"']
\end{tikzcd}$$
Its orbit is characterized by the rank of following matrices, which are matrices of $\varphi^V_{[a,b]}$ under standard basis.
$$B_1,B_2,A_1,A_2,B_3,A_3,B_2B_1,[A_1,B_2],A_1A_2,\left[\begin{matrix}
                                                      B_3 \\
                                                      A_2
                                                    \end{matrix}\right],[A_3,B_3],$$ $$[A_1,B_2B_1],[A_1A_2,B_2],\left[\begin{matrix}
          B_3 \\
          A_1A_2
        \end{matrix}\right],\left[\begin{matrix}
          A_3& B_3 \\
          0 &A_2
        \end{matrix}\right],$$ $$[A_1A_2,B_2B_1],\left[\begin{matrix}
          B_3& 0 \\
          A_1A_2 & B_2
        \end{matrix}\right],\left[\begin{matrix}
          A_3& B_3 \\
          0 & A_1A_2
        \end{matrix}\right],\left[\begin{matrix}
          B_3& 0 \\
          A_1A_2 & B_2B_1
        \end{matrix}\right],\left[\begin{matrix}
          A_3& B_3 & 0\\
          0 & A_1A_2&B_2
        \end{matrix}\right],$$$$\left[\begin{matrix}
          A_3& B_3 & 0\\
          0 & A_1A_2&B_2B_1
        \end{matrix}\right]$$
\end{examp}

To clarify the defining ideal of an orbit closure $\overline{\mathcal{O}_\mathbf{r}}$, we require some polynomial functions on $rep_Q(\mathbf{d})$ to describe the rank conditions on matrices. The affine coordinate ring $k[rep_Q(\mathbf{d})]$ is a polynomial ring generated by affine coordinates, which are functions that represent matrix entries. Let $f_{\alpha_i},f_{\beta_i},i=1,2,\cdots$ be matrices of affine coordinates such that $f_{\alpha_i}(V)=A_i$ and $f_{\beta_i}(V)=B_i$ for $V=(A_\cdot,B_\cdot)$.

In this paper, we refer to a subset $S\subset Q_0$ equipped with some total order as a \textbf{sequence} of vertices. Throughout this paper, we use the symbol $\prec$, such as ``$x\prec y$ in $S$", to denote the order relation in totally ordered set (sequence) of vertices to distinguish it from the symbol $``<"$ which typlically indicates the order of integers in conventional usage. We present a sequence as $$S=x\prec y\prec z\prec \cdots,$$ and denote the index of an element $x\in S$ in increasing order by $\ind_S(x)$, such as $\ind_S(x)=1,\ind_S(z)=3$ in the sequence above.

\begin{defn}\label{defBM}
  Given two sequences $S',S$ of vertices, i.e. equipping two sets $S',S\subset Q_0$ with some total orders respectively, a \textbf{$(S',S)$-blocked} matrix is a block matrix such that:
  \begin{itemize}
    \item[(BM1)] its block rows are labeled from north to south (top to bottom) by $S'$ in ascending order;
    \item[(BM2)] its block columns are labeled from west to east (left to right) by $S$ in ascending order;
    \item[(BM3)] the block row (resp. column) labeled by vertex $x$ has height (resp. width) $d_x$.
  \end{itemize}
\end{defn}

In such a matrix, we denote the row/column labeled by $x$ as row/column $[x]$, and we refer to the submatrix (block) placed at the intersection of row $[x]$ and column $[y]$ as its $([x],[y])$-block. It is important to note that labeling a matrix as $(S',S)$-blocked is simply a convenient way to describe it block by block, and does not imply any special properties. Any matrix of the appropriate size can be considered a $(S',S)$-blocked matrix.

Now, for an interval $[a,b]$ of $Q$, we assign total orders to the sets $Sou_{[a,b]}$ and $Sin_{[a,b]}$ by using the descending order of the integer indices of the vertices. We define a $(Sin_{[a,b]},Sou_{[a,b]})$-blocked matrix $M^Q_{[a,b]}$, where its $([x],[y])$-block is given by
$$\left\{\begin{array}{ll}
          f_{\alpha_i}f_{\alpha_{i+1}}\cdots f_{\alpha_j}, & \mbox{if $x<y$ and there is path}\begin{tikzcd}
x & x+1 \arrow[l, "\alpha_i"] & \cdots \arrow[l, "\alpha_{i+1}"] & y \arrow[l, "\alpha_j"]
\end{tikzcd} \\
           f_{\beta_j}\cdots f_{\beta_{i+1}} f_{\beta_i} , & \mbox{if $x>y$ and there is path}\begin{tikzcd}
y \arrow[r, "\beta_i"'] & y+1 \arrow[r, "\beta_{i+1}"'] & \cdots \arrow[r, "\beta_j"'] & x
\end{tikzcd} \\
          0,& \mbox{if there is no path from $y$ to $x$.}
         \end{array}\right. $$

Every entry in such a matrix is a function on $rep_Q(\mathbf{d})$. Therefore, $M^Q_{[a,b]}$ can be evaluated at $V$, and $M^Q_{[a,b]}(V)$ is exactly the matrix of $\varphi_{[a,b]}(V)$ under the standard basis. Every minor of $M^Q_{[a,b]}$ is a polynomial of affine coordinates of $rep_Q(\mathbf{d})$. According to the work of Riedtmann and Zwara\cite{definingidealofA} we have the defining ideal $I_\mathbf{r}$ of the orbit closure $\mathcal{O}_{\mathrm{r}}$ is generated by
$$<\mbox{minors of size $r_{[a,b]}+1$ in $M^Q_{[a,b]}$}|[a,b]\subset Q>.$$

\section{Kazhdan-Lusztig varieties}\label{KL}
In this paper, our focus is on a specific type of Kazhdan-Lusztig variety. To gain a more comprehensive understanding of both the Kazhdan-Lusztig variety and the Schubert variety, we recommend referring to \cite[Chap.8]{springer_linear_1998}, \cite{reviewsingofschu}, \cite{singofsch}, and \cite{kazhdan_representations_1979}.

Let us consider $N$ to be the sum of $d_x$ for every vertex $x$ in $Q_0$, i.e., $N=\sum_{x\in Q_0} d_x$. Additionally, let $G$ be the group $\mathrm{GL}(N)$. We define $S$ as a sequence of vertices obtained by applying some certain total order to $Q_0$. We fix the upper triangular subgroup of $G$ as the standard Borel subgroup $B$, and let $P$ to be the parabolic subgroup consisting of block upper triangular $(S,S)$-blocked matrices. The quotient variety $G/P$ is known as a flag variety.

We define $N_i$ to be the sum of $d_x$ over the first $i$ elements in the totally ordered set $S$, where $i = 1, 2, \cdots, n$, and $N_0=0$. For example, $$S=2\prec 5\prec 4\prec \cdots, N_2=d_2+d_5, N_3=d_2+d_5+d_4.$$Then, associated with the maximal torus of diagonal matrices, $W_G\cong S_N$ (the symmetric group) and $W_P\cong \prod_{i=1}^{n}S_{N_i-N_{i-1}}$ as the Weyl groups of $G$ and $P$ respectively. The orbits of the natural $B$-action on $G/P$ are indexed by $W_G/W_P$, or equivalently, by the set of shortest representatives within the cosets of $W_G/W_P$. These shortest representatives can be collected as $W^P:=$
$$\{w=(w_1,w_2,\cdots,w_N)\in S_N|w_{N_{i-1}+1}<w_{N_{i-1}+2}<\cdots<w_{N_i},\forall 1\leq i\leq n\}.$$
where $w=(w_1,w_2,\cdots,w_N)$ is the one-line notation of permutation $w$ that maps $j$ to $w_j$ for $j=1,2,\cdots,N$. Additionally, We consider any permutation $w$ as a permutation matrix with the $(i,j)$-elements given by
$$\left\{\begin{array}{ll}
           1,& \mbox{if }i=w_j \\
           0, & \mbox{else}
         \end{array}\right.$$

For $w\in W^P$, the $B$-orbit $BwP/P$ is called a Schubert cell, denoted by $X_w^\circ$. The closure of $X_w^\circ$ is the Schubert variety $X_w$. Similarly, for the opposite Borel $B^-$ of lower triangular matrices, we have the opposite Schubert cell $X^v_\circ=B^-vP/P$ where $v\in W^P$. The intersection of a Schubert variety and an opposite cell, i.e., $X_w\cap X^v_\circ$ is called a Kazhdan-Lusztig variety, denoted by $Y_w^v$. In this paper, we focus on a specific type of opposite cell $X^v_\circ$, where the index $v$ has a special form, as described below (also see Example \ref{Zex} for a specific instance).

Consider the permutation $v=(v_1,v_2,\cdots,v_N)\in W^P$ such that $v$ can be viewed as a $(S',S)$-blocked matrix, where $S'$ is a sequence of vertices formed by equipping $Q_0$ with some new order, and the $([x],[y])$-blocks of $v$ are given by
\begin{equation}\label{vQ}
\left\{\begin{array}{ll}
           I_{d_x}, & \mbox{if }x=y \\
           0, & \mbox{if }x\neq y
         \end{array}\right.\end{equation}
 We identify the opposite Schubert cell $X^v_\circ$ with the space (denoted by $\mathbb{A}^v$) of $(S',S)$-blocked matrices whose $([x],[y])$-blocks are determined by
 \begin{equation}\label{AQ}\left\{\begin{array}{ll}
            I_{d_x}, &\mbox{if }x=y \\
            0, & \mbox{if }x\prec y \mbox{ in } S\mbox{ or }S'   \\
            \mbox{undetermined}, &\mbox{ else}
          \end{array}\right. \end{equation}

Thus, the opposite cell $X^v_\circ$ is isomorphic to the affine space $\mathbb{A}^{\dim G/P-l(v)}$. In this paper we always identify the opposite cell with this space of matrices. Therefore, the coordinate ring $k[X^v_\circ]$ is the polynomial ring of the affine coordinates corresponding to the ``undetermined" area of matrices. We replace the entries in the ``undetermined" area of a matrix in such matrix space with the corresponding affine coordinate functions and denote the result by $Z$ (called the generic matrix).

Given any matrix $A$, we denote by $A_{i\times j}$ the southwest submatrix of $A$ consisting of entries in the southernmost $i$ rows and westernmost $j$ columns. At this time, if the southernmost $i$-th row (resp. westernmost $j$-th column) is exactly the closed north boundary of block row $[x]$ (resp. closed east boundary of block column $[y]$), i.e $i=\sum\limits_{\ind_{S'}(x')\geq \ind_{S'}(x)}d_{x'}$ (resp. $j=N_{ind_S(y)}$), we also denote $A_{i\times j}=A_{[x]\times j}$ (resp. $A_{i\times [y]}$), and similarly we define $A_{[x]\times[y]}$ to be the southwest submatrix of $A$ with row $[x]$ and column $[y]$ as the closed boundary. Note that $A_{[x]\times[y]}$ can be viewed as a $(\tilde{S'},\tilde{S})$-blocked matrix with $\tilde{S'}\subset S',\tilde{S}\subset S$.

The Bruhat order on $W^P$ is defined as $\tau\leq w$ if and only if $X_\tau\subset X_w$. This order can be described by the rank relations: $\tau\leq w$ if and only if
$$\rank \tau_{i\times [y]}\leq w_{i\times [y]},\forall i\in \{1,2,\cdots,N\},y\in Q_0.$$
The Kazhdan-Lusztig variety $Y^v_w$ is non-empty if and only if $v\leq w$ in $W^P$. The defining ideal $I_w$ of Kazhdan-Lusztig variety $Y_w^v$ in $k[X^v_\circ]$ is generated by (cf. \cite{kinser_type_2015},\cite{ESS})
$$\mbox{minors of size $\rank w_{i\times [y]}+1$ in $Z_{i\times [y]}$}, i\in \{1,2,\cdots,N\},y\in Q_0.$$

Furthermore, using the technique of Fulton's essential box \cite{ESS}, we can simplify the generator set. Here, we recall the row labels $S'$ of $v$. We denote by $P'$ the parabolic subgroup of block upper triangular $(S',S')$-blocked matrices and let $W_{P'}$ be its Weyl group. Particularly, when the index $w$ is the shortest representative in some $\tilde{w}W_P$ such that $\tilde{w}$ is the unique longest element in the double coset $W_{P'}\tilde{w}W_P$, we have
$$I_w=<\mbox{minors of size $\rank w_{[x]\times [y]}+1$ in $Z_{[x]\times [y]}$}| x,y\in Q_0>.$$

To observe this, one may apply \cite[Lemma 3.10]{ESS} to $\tilde{w}$ (or see our interpretation later). We say that such $w$ is of \textbf{Z-type}, where ``Z" indicates a further definition of ``Zelevinsky permutation" in Section 4.2.

\begin{defn}\label{ztype}
  A permutation $w$ is called of Z-type (corresponding to $(S',S)$-blocking), if one of the following equivalent conditions holds:
  \begin{itemize}
    \item[(1)] $w$ is the shortest element in some coset $\tilde{w}W_P$ such that $\tilde{w}$ is the unique longest element in the double coset $W_{P'}\tilde{w}W_P$.
    \item[(2)] $w$ is the unique maximal element in $W_{P'}w\cap W^P$ under the Bruhat order on $W^P$.
    \item[(3)] the nonzero entries in any block of $w$ form an identity matrix (of size may smaller than the block), and in the same block row such identity matrices (from different blocks) are arranged from southwest to northeast, in the same block column such identity matrices are arranged from northwest to southeast.
  \end{itemize}
\end{defn}

For any minor of size $\rank w_{i\times [y]}+1$ in $Z_{i\times [y]}$, where $w$ is a Z-type permutation, it is observed that either
\begin{itemize}
  \item $\exists k\geq 0$, satisfying $\rank w_{i\times [y]}=\rank w_{(i-k)\times [y]}+k$, and there is $x\in S'$ such that the southwest ${(i-k)\times j}$ area is exactly the same as southwest ${[x]\times[y]}$ area. At this time the minor of size $\rank w_{i\times j}+1$ in $Z_{i\times j}$ must be generated by minors of size $\rank w_{[x]\times [y]}+1$ in $Z_{[x]\times [y]}$. Or,
  \item $\exists k>0$, satisfying $\rank w_{i\times j}=\rank w_{(i+k)\times k}$, and there exists $x\in S'$ such that the southwest $(i-k)\times j$ area is exactly the same as southwest ${[x]\times[y]}$ area. At this time any minor of size $\rank w_{i\times j}+1$ in $Z_{i\times j}$ is exactly a minor of size $\rank w_{[x]\times [y]}+1$ in $Z_{[x]\times [y]}$.
\end{itemize}
That explains the spirit of Fulton's essential box in our specific situation.
\section{Zelevinsky map}
\subsection{Construction of the map}\label{constructionofthemapsection}
For two sequences of vertices $S=a_1\prec a_2\prec \cdots\prec a_s$ and $S'=b_1\prec b_2\prec \cdots\prec b_{s'}$ with empty intersection, a new sequence $S\triangleleft S'$ can be created by ordering the union of $S$ and $S'$ as $a_1 \prec a_2 \prec \cdots \prec a_s \prec b_1 \prec b_2 \prec \cdots \prec b_{s'}$.

For a quiver $Q$ of type A, any path can be naturally viewed as a sequence of vertices, where the order is determined by $s(\alpha) \prec t(\alpha)$ for every arrow $\alpha$ in the path. In quiver $Q$ we denote the maximal paths from right to left by $L_1,L_2,\cdots,L_l$ and the maximal paths from left to right by $R_1,R_2,\cdots,R_r$(the subscripts are always counted from left). The start and target vertices of a path $P$ can be obtained using the notations $s(P)$ and $t(P)$, respectively.

From now on, let

$S'_Q=$ $$R_1\backslash t(R_1)\triangleleft R_2\backslash t(R_2)\triangleleft \cdots \triangleleft R_r\backslash t(R_r)\triangleleft \{n\}\triangleleft L_l\backslash s(L_l)\triangleleft L_{l-1}\backslash s(L_{l-1})\triangleleft \cdots \triangleleft L_1\backslash s(L_1),$$

$S_Q=$ $$L_l\backslash t(L_l)\triangleleft L_{l-1}\backslash t(L_{l-1})\triangleleft \cdots \triangleleft L_1\backslash t(L_1)\triangleleft \{1\}\triangleleft R_1\backslash s(R_1)\triangleleft R_{2}\backslash s(R_{2})\triangleleft \cdots \triangleleft R_r\backslash s(R_r).$$

Let $P_Q$ be the parabolic subgroup of block upper triangular $(S_Q,S_Q)$-blocked matrices.

Given any representation $V=(A_\cdot,B_\cdot)\in rep_Q(\mathbf{d})$, we define a $(S'_Q,S_Q)$-blocked matrix with $([x],[y])$-blocks given by

$$\left\{\begin{array}{ll}
           I_{d_x}, &\mbox{if }x=y  \\
           0, & \mbox{if there is no path from $y$ to $x$} \\
          A_i,& \mbox{if $y=x+1$ and }\begin{tikzcd}
x & y \arrow[l, "\alpha_i"] \end{tikzcd}\\
           0, &  \mbox{if $y>x+1$ and there is a path from right to left}  \\
              & \begin{tikzcd}
x & x+1 \arrow[l, "\alpha_i"] & \cdots \arrow[l, "\alpha_{i+1}"] & y \arrow[l, "\alpha_j"]\end{tikzcd}\\
   B_j\cdots B_{i+1} B_i, &\mbox{if $y<x$ and there is a path from left to right}  \\
            &\begin{tikzcd}
y \arrow[r, "\beta_i"'] & y+1 \arrow[r, "\beta_{i+1}"'] & \cdots \arrow[r, "\beta_j"'] & x\end{tikzcd}\\

\end{array}\right.$$
We denote this matrix by $\zeta_Q(V)$. Let permutation $v_Q$ be the $(S'_Q,S_Q)$-blocked matrix defined as in (\ref{vQ}), its $([x],[y])$-block is
$$\left\{\begin{array}{ll}
           I_{d_x}, &\mbox{if }x=y  \\
           0, & \mbox{else}
         \end{array}\right.$$

We identify the opposite cell $B^-vP_Q/P_Q$ with the space of matrices $\mathbb{A}^{v_Q}$ as in (\ref{AQ}). Then $\zeta_Q(V)$ lies in $\mathbb{A}^{v_Q}$ for any $V\in rep_Q(\mathbf{d})$. We can define a map
$$\begin{array}{cccc}\zeta_Q:&rep_Q(\mathbf{d})&\to &\mathbb{A}^{v_Q}\\
&V=(A_\cdot,B_\cdot)&\mapsto& \zeta_Q(V)\end{array}$$

Conventionally, we also refer to this map as the Zelevinsky map for an arbitrary oriented type A quiver. This map is indeed a morphism of varieties, with a closed image. Furthermore, it naturally induces a surjective homomorphism of $k$-algebras
$$\zeta^*_Q:k[\mathbb{A}^{v_Q}]\to k[rep_Q(\mathbf{d})].$$

\begin{examp}\label{Zex}
Consider   $$\begin{tikzcd}
  V=V_1 \arrow[r, "B_1","\beta_1"'] & V_2 \arrow[r, "B_2","\beta_2"'] & V_3 & V_4 \arrow[l,"\alpha_1","A_1"'] & V_5 \arrow[l,"\alpha_2", "A_2"'] \arrow[r,"\beta_3"', "B_3"] & V_6& V_7 \arrow[l,"\alpha_3","A_3"']\\[-6mm]\ \ \ \ |\arrow[rr, "R_1"', no head] &             & {}|\arrow[rr, "L_1"', no head] & &{}|\arrow[r, "R_2"', no head] &{}|\arrow[r, "L_2"', no head] &{}|
\end{tikzcd}$$
The critical points of this quiver are $s_1=1,s_2=3,s_3=5,s_4=6$ and $s_5=7$. The paths are sequences, for example $$R_1=1\prec 2\prec 3, L_1=5\prec 4\prec 3.$$ The image of this representation under the Zelevinsky map is the matrix
$$\zeta_Q(V)=\begin{blockarray}{lccccccc}
 \begin{block}{l[ccccccc]}
                 1 &  0 & 0 & 0 & I & 0 & 0 & 0 \\
                2  &  0 & 0 & 0 & B_1 & I & 0 & 0 \\
                 5 &  0 & I & 0 & 0 & 0 & 0 & 0 \\
                 7 &  I & 0 & 0 & 0 & 0 & 0 & 0 \\
                 6 &  A_3 & B_3 & 0 & 0 & 0 & 0 & I \\
                 4 &  0 & A_2 & I & 0 & 0 & 0 & 0 \\
                  3 & 0 & 0 & A_1 & B_2B_1 & B_2 & I & 0\\
                  \end{block}
 &7 &  5 &  4 &  1 &  2 &  3 &  6\end{blockarray}$$
where the labels of rows and columns are $S'_Q$ and $S_Q$ respectively. Recall the settings discussed in Section \ref{KL}. In this example, the parabolic $P_Q$ is the group of block upper triangular matrices partitioned by $(d_7,d_5,d_4,d_1,d_2,d_3,d_6)$, where $\mathbf{d}$ is the dimension vector of $V$. Subsequently, the permutation index $v_Q$ of the opposite cell is exactly $$v_Q=\begin{blockarray}{lccccccc}
 \begin{block}{l[ccccccc]}
                 1 &  0 & 0 & 0 & I & 0 & 0 & 0 \\
                2  &  0 & 0 & 0 & 0 & I & 0 & 0 \\
                 5 &  0 & I & 0 & 0 & 0 & 0 & 0 \\
                 7 &  I & 0 & 0 & 0 & 0 & 0 & 0 \\
                 6 &  0 & 0 & 0 & 0 & 0 & 0 & I \\
                 4 &  0 & 0 & I & 0 & 0 & 0 & 0 \\
                  3 & 0 & 0 & 0 & 0 & 0 & I & 0\\
                  \end{block}
 &7 &  5 &  4 &  1 &  2 &  3 &  6\end{blockarray}$$
And the affine space $\mathbb{A}^{v_Q}=B^-v_QP_Q/P_Q$ consists of the matrices such as$$\begin{blockarray}{lccccccc}
 \begin{block}{l[ccccccc]}
                 1 &  0 & 0 & 0 & I & 0 & 0 & 0 \\
                2  &  0 & 0 & 0 & * & I & 0 & 0 \\
                 5 &  0 & I & 0 & 0 & 0 & 0 & 0 \\
                 7 &  I & 0 & 0 & 0 & 0 & 0 & 0 \\
                 6 &  * & * & 0 & * & * & 0 & I \\
                 4 &  * & * & I & 0 & 0 & 0 & 0 \\
                  3 & * & * & * & * & * & I & 0\\
                  \end{block}
 &7 &  5 &  4 &  1 &  2 &  3 &  6\end{blockarray},$$where $*$ indicates the places of affine coordinates.

The rank parameters needed to determine an orbit (closure) are listed in Example \ref{ex1}. In order to obtain an isomorphism between the quiver locus and the Kazhdan-Lusztig variety in this example, one may parallel the rank parameter in Example \ref{ex1} with the rank of the southwest submatrices of $\zeta_Q(V)$, which is determined by the defining ideals of these two varieties. Consider a few matrices that appear in the aforementioned list, for example: $$M_{[1,2]}^Q(V)=B_1,M_{[2,4]}^Q(V)=[A_1,B_2],M_{[1,7]}^Q(V)=\left[\begin{matrix}
          A_3& B_3 & 0\\
          0 & A_1A_2&B_2B_1
        \end{matrix}\right].$$By performing elementary transformations on matrices, we can calculate the rank of the southwest submatrices of $v_Q$, as illustrated below: $$\rank \zeta_Q(V)_{[2]\times[1]}=\rank \left[\begin{matrix}0 & 0 & 0 & B_1\\
                   0 & I & 0 & 0  \\
                  I & 0 & 0 & 0 \\
                 A_3 & B_3 & 0 & 0 \\
                   0 & A_2 & I & 0  \\
                 0 & 0 & A_1 & B_2B_1\end{matrix}\right]=\rank B_1+c,$$
        $$\rank \zeta_Q(V)_{[3]\times[2]}=\rank \left[\begin{matrix}0 & 0 & A_1 & B_2B_1 & B_2\end{matrix}\right]=\rank [A_1,B_2]+c',$$
        $$\rank \zeta_Q(V)_{[6]\times[1]}=\rank \left[\begin{matrix}
                                   A_3 & B_3 & 0 & 0  \\
                   0 & A_2 & I & 0 \\
                   0 & 0 & A_1 & B_2B_1
                                  \end{matrix}\right]=\rank \left[\begin{matrix}
          A_3& B_3 & 0\\
          0 & A_1A_2&B_2B_1
        \end{matrix}\right]+c'',$$ where $c,c',c''$ are constants that depend only on the form of $Q$ and $\mathbf{d}$, but not on the specific $V$. Actually, these constants are precisely the ranks of the corresponding southwest submatrices of $v_Q$. That is, we have
        $$\rank \zeta_Q(V)_{[2]\times[1]}=\rank M_{[1,2]}^Q(V)+\rank (v_Q)_{[2]\times[1]},$$
        $$\rank \zeta_Q(V)_{[3]\times[2]}=\rank M_{[2,4]}^Q(V)+\rank (v_Q)_{[3]\times[2]},$$
        $$\rank \zeta_Q(V)_{[6]\times[1]}=\rank M_{[1,7]}^Q(V)+\rank (v_Q)_{[6]\times[1]}.$$
        One may observe that there should be a correspondence from intervals to $S'_Q\times S_Q$, such as $$[1,2]\mapsto [2]\times[1],[2,4]\mapsto [3]\times[2],[1,7]\mapsto [6]\times[1].$$
\end{examp}
To specifically describe the correspondence from an interval to a southwest submatrix of $\zeta_Q(V)$ mentioned above, we make the following notations. First, we denote the element $z$ satisfying $\ind_{S'_Q}(z)=\ind_{S'_Q}(x)-1$ in $S'_Q$, i.e., the previous element of $x$ in the sequence $S'_Q$, by $N(x)$. Visually in $S'_Q$ the labels of rows, $N(x)$ is the first label north of $x$. Similarly we have $S(x)$ as the south one and $W(x),E(x)$ as the west and east labels of $x$ (in the sequence $S_Q$). The composition of the same such operations also makes sense, like $S^2(x)=S(S(x))$.
$$\begin{blockarray}{lccccc}
 \begin{block}{l[ccccc]}
                  \vdots   &   &  &  &  &   \\
                N(x) &   &  &  &  &    \\
                   x &  &  &  &  &   \\
                 S(x) &  &  & \cdots &  &    \\
                 S^2(x) &   &  &  &  &   \\
                  \vdots &  &  &  &  &   \\
                  \end{block}
 &\cdots &  W(x) &   x& E(x)  & \cdots  \end{blockarray}$$

 Recall that $s_1,s_2,\cdots$ are the critical points of $Q$.
\begin{defn}
  Given a vertex $x\in Q_0$, we define $x^L_Q$ to be
  $$\left\{\begin{array}{ll}
             s_{a-1}, &\mbox{if $x=s_a$ is a critical point}\\
             x, &\mbox{if $x$ is not a critical point}
           \end{array}\right.$$
  and define $x^R_Q$ to be
  $$\left\{\begin{array}{ll}
             s_{b+1}, &\mbox{if $x=s_b$ is a critical point}  \\
             x, &\mbox{if $x$ is not a critical point}
           \end{array}\right.$$ where $s_{a-1}=1$ if $s_a=1$ and $s_{b+1}=n$ if $s_b=n$.

  Moreover, for $x<n$ we define $\lambda_Q(x)$ to be
  $$\left\{\begin{array}{ll}
             x^L_Q, & \mbox{if $x$ is a source in $Q_{[x,n]}$} \\
             W(x^L_Q) , & \mbox{if $x$ is a sink in $Q_{[x,n]}$}
           \end{array}\right.$$
 and for $x>1$ we define $\mu_Q(x)$ to be
    $$\left\{\begin{array}{ll}
             x^R_Q, & \mbox{if $x$ is a sink in $Q_{[1,x]}$} \\
             S(x^R_Q), & \mbox{if $x$ is a source in $Q_{[1,x]}$}
           \end{array}\right.$$
\end{defn}
\begin{examp}
  In Example \ref{Zex}, consider the matrix $M_{[2,4]}^Q(V)=[A_1,B_2]$. We have $$2^L_Q=2,4^R_Q=4.$$And by the source/sink behaviors of $a=2,b=4$ in $Q_{[a,b]}$ there are $$\lambda_Q(a)=2^L_Q=2,\mu_Q(b)=S(4^R_Q)=3.$$ Then the matrix $\zeta_Q(V)_{[\mu_Q(b)]\times[\lambda_Q(a)]}=\zeta_Q(V)_{[3]\times[2]}$ exactly satisfies $$\rank \zeta_Q(V)_{[3]\times[2]}=\rank M_{[2,4]}^Q(V)+\rank (v_Q)_{[3]\times[2]}.$$
  $$\begin{blockarray}{cccccc|cc}
 \begin{block}{c[ccccc|cc]}
                 1 &  0 & 0 & 0 & I & 0 & 0 & 0 \\
                2  &  0 & 0 & 0 & B_1 & I & 0 & 0 \\
                 5 &  0 & I & 0 & 0 & 0 & 0 & 0 \\
                 7 &  I & 0 & 0 & 0 & 0 & 0 & 0 \\
                 6 &  A_3 & B_3 & 0 & 0 & 0 & 0 & I \\
                \boxed{4^R_Q=4} &  0 & A_2 & I & 0 & 0 & 0 & 0 \\
                \cdashline{1-8}[1pt/0pt]  3 & 0 & 0 & A_1 & B_2B_1 & B_2 & I & 0\\
                  \end{block}
 &7 &  5 &  4 &  1 &  \boxed{2^L_Q=2} &  3 &  6\end{blockarray}$$We can determine the values $\lambda_Q(a),\mu_Q(b)$ in a visual way: first find the vertices $a^L_Q\in S_Q, b^R_Q\in S'_Q$ in the labels of columns and rows respectively, then draw a line on the right(resp. left) of column $[a^L_Q]$ if $a$ is a source(resp. sink) in $Q_{[a,b]}$, and draw a line over(resp. under) the row $[b^L_Q]$ if $b$ is a sink(resp. source) in $Q_{[a,b]}$. Finally the southwest matrix bounded by these two lines is exactly $\zeta_Q(V)_{[\mu_Q(b)]\times[\lambda_Q(a)]}$. The following diagram indicates the relationship between the source/sink behavior and where we should draw the line. $$\begin{tikzcd}
 a \arrow[r] & \cdots           & \ a^L_Q| & , & \cdots \arrow[r] & b           & \overline{b^R_Q} \end{tikzcd}$$
  $$\begin{tikzcd}
 a & \cdots  \arrow[l]         & \ |a^L_Q & , & \cdots & b   \arrow[l]        & \underline{b^R_Q} \end{tikzcd}$$

\end{examp}
In the examples above we can also extract the relations between a rank parameter matrix and a corresponding southwest part in $\zeta_Q(V)$. That is, they can be linked through a series of special transformations of matrices. For instance, in Example \ref{Zex},
$$\rank \zeta_Q(V)_{[6]\times[1]}=\left[\begin{matrix}
                                   A_3 & B_3 & 0 & 0  \\
                   0 & A_2 & I & 0 \\
                   0 & 0 & A_1 & B_2B_1
                                  \end{matrix}\right]\Rightarrow \left[\begin{matrix}
                                   A_3 & B_3 & 0 & 0  \\
                   0 & 0 & I & 0 \\
                   0 & -A_1A_2 & A_1 & B_2B_1
                                  \end{matrix}\right]$$ $$\Rightarrow\left[\begin{matrix}
                                   A_3 & B_3 & 0 & 0  \\
                   0 & 0 & I & 0 \\
                   0 & -A_1A_2 & 0 & B_2B_1
                                  \end{matrix}\right]\Rightarrow\left[\begin{matrix}
                                   A_3 & B_3 & 0 & 0  \\
                   0 & A_1A_2 & B_2B_1 & 0 \\
                   0 & 0 & 0 & I
                                  \end{matrix}\right]=\left[\begin{matrix}
                                                              M_{[1,7]}^Q(V) & 0 \\
                                                              0 & I
                                                            \end{matrix}\right]$$
Rigorously, such relations are described by the following definition.
\begin{defn}
Given any $(S',S)$-blocked matrix $A$ with sequence of vertices $S',S$, we define two operations. Each operation is a composition of a series of special elementary block transformations of matrix.
\begin{itemize}
  \item[\textcircled{1}]in this operation, the following transformations are allowed: 
      \begin{itemize}
        \item[(i)] replacing a block row $[x]$ of $A$ by $$\mbox{ row }[x]\ -\mbox{ the $([x],[y])$-block }\times\mbox{ row }[y]\ $$with $y\prec x$ in $S'$.
        \item[(ii)] replacing a block column $[x]$ of $A$ by $$\mbox{ column } [x]\ -\mbox{ column }[y]\ \times \mbox{ the $([y],[x])$-block }$$with $x\prec y$ in $S$.
      \end{itemize}
  \item[\textcircled{2}]in this operation, the following transformations are allowed: 
  \begin{itemize}
    \item[(i)] Scaling a block row/column by $-1$.
    \item[(ii)] Interchanging the block rows/columns.
  \end{itemize}
\end{itemize}\end{defn}

Here we provide the allowed transformations in very detail in order to emphasize that this process will not involve the inverse of matrix. Therefore, we can replace the matrix in the field $k$ by matrix in some $k$-algebra later (Corollary \ref{k-alg}). Given two matrix $A,B$, we say $A\stackrel{\mbox{\rm{\tiny\textcircled{1}}}}{\Rightarrow}B$ if $A$ can be transformed into $B$ through an appropriate operation of type \rm{\textcircled{1}}. And similarly we have $A\stackrel{\mbox{\rm{\tiny\textcircled{2}}}}{\Rightarrow}B$. We say $A\stackrel{\mbox{\rm{\tiny\textcircled{1}+\textcircled{2}}}}{\Longrightarrow}B$ if $A$ can be transformed into $B$ in two steps: in the first step we use an operation \rm{\textcircled{1}}, and in the second step we use an operation \rm{\textcircled{2}}.

From now on, the proofs of our main theorems depend on the induction method. Given a quiver $Q$ with the number of maximal paths $l+r>1$, we denote the rightmost critical point except for $n$ by $n'$ and define the subquiver $Q':=Q_{[1,n']}$. Similarly, we can define $S'_{Q'},S_{Q'},M^{Q'}_{[a,b]},\zeta_{Q'},\lambda_{Q'}(x),\mu_{Q'}(x)$, and so on, using the data of $Q'$ (but note that we should replace $n$ with $n'$ in definitions about $Q'$). Given a representation $V\in rep_Q(\mathbf{d})$ we define a representation $V'\in rep_{Q'}(\mathbf{d'})$, where $d'=(d_x)_{x\in Q_0'}$, by $V'=(V_\alpha)_{\alpha\in Q_1'}$.

\begin{theo}\label{main}
  For any interval $[a,b]\subset Q$, there are labels $\mu_Q(b),\lambda_Q(a)$ such that
  $$\zeta_Q(V)_{[\mu_Q(b)]\times[\lambda_Q(a)]}\stackrel{\mbox{\rm{\tiny\textcircled{1}+\textcircled{2}}}}{\Longrightarrow}\left[\begin{matrix}
                                                                                                                            M^Q_{[a,b]}(V) &  &  \\
                                                                                                                             & I  &  \\
                                                                                                                             &  & 0
                                                                                                                          \end{matrix}\right],$$
where the identity matrix is of size $\rank (v_Q)_{[\mu_Q(b)]\times[\lambda_Q(a)]}$. Moreover, $$\rank (\zeta_Q(V)_{[\mu_Q(b)]\times[\lambda_Q(a)]})=\rank M_{[a,b]}(V)+\rank (v_Q)_{[\mu_Q(b)]\times[\lambda_Q(a)]}.$$
\end{theo}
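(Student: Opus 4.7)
The plan is to prove the statement by induction on the number $l+r$ of maximal paths in $Q$, using the reduction from $Q$ to $Q' = Q_{[1,n']}$ set up just before the theorem. The base case is $l+r=1$, when $Q$ is a single equioriented path: here $\zeta_Q(V)$ is block antitriangular, with identity blocks from $v_Q$ along an antidiagonal and a single product $A_j \cdots A_i$ (or $B_i \cdots B_j$) in a prescribed position. The southwest submatrix selected by $\mu_Q(b), \lambda_Q(a)$ then isolates exactly $M^Q_{[a,b]}(V)$ together with identity and zero blocks, and a sequence of block row/column permutations and sign changes (operation \textcircled{2}) rearranges it into the desired block-diagonal form.

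For the inductive step, $\zeta_Q(V)$ is obtained from $\zeta_{Q'}(V')$ by appending the block rows and columns corresponding to the vertices in $L_l$ and $R_r$ lying outside $Q'$, together with the identity and composition blocks encoding the two new maximal paths. The argument splits into two cases depending on where $[a,b]$ sits. If $b \leq n'$, so $[a,b] \subset Q'$, one verifies that $\mu_Q(b), \lambda_Q(a)$ pick out essentially the submatrix $\zeta_{Q'}(V')_{[\mu_{Q'}(b)] \times [\lambda_{Q'}(a)]}$ together with a collection of extra rows containing only identity blocks of $v_Q$ (and already-zero entries); these decouple from the rest under operation \textcircled{2} and contribute to the final identity summand, so the inductive hypothesis on $Q'$ finishes this case. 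If $b > n'$, then $[a,b]$ properly includes the new rightmost paths and $M^Q_{[a,b]}(V)$ involves the new $A_i$ or $B_i$. Here one uses operation \textcircled{1} with coefficients drawn from individual block entries to manufacture the necessary path compositions such as $A_i A_{i+1}$ or $B_{i+1} B_i$, pivoting on identity blocks of $v_Q$ that sit in intermediate rows; after these clearings the block form is either obtained directly or the problem reduces, via the isolation of an identity summand, to an inductive instance on a shorter subinterval of $Q'$.

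The main obstacle will be the combinatorial bookkeeping. For each case one must verify, according to the source/sink dichotomy encoded in the definitions of $\lambda_Q$ and $\mu_Q$, that the necessary identity pivots are actually available inside the southwest submatrix, and that the sequence of clearings never requires inverting a block. This is a crucial restriction, since operation \textcircled{1} only permits multiplication by a block entry of the matrix, not by its inverse; it is also the reason the scheme-level conclusion (Corollary \ref{k-alg}) will follow. The asymmetric order constraints $y \prec x$ in $S'$ and $x \prec y$ in $S$ force the clearings to proceed in a particular direction, and the definitions of $\lambda_Q(a), \mu_Q(b)$ (with their source/sink cases) appear to have been engineered precisely so that the identity pivots line up in the right places for this directed procedure.

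Once the reduction is established, the rank identity in the second sentence of the theorem is immediate: operations \textcircled{1} and \textcircled{2} preserve rank, the identity block in the reduced form has by construction size $\rank (v_Q)_{[\mu_Q(b)]\times[\lambda_Q(a)]}$, and the only other nonzero block is $M^Q_{[a,b]}(V)$. So the entire content of the theorem rests on carrying through the block reductions described above.
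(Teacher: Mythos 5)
Your overall strategy matches the paper's: induction on the number $l+r$ of maximal paths, base case $l+r=1$, and inductive step by passing from $Q$ to $Q'=Q_{[1,n']}$ and relating $\zeta_Q(V)$ to $\zeta_{Q'}(V')$ together with the newly appended block rows and columns. The paper, like you, then splits on the position of $[a,b]$ relative to $n'$. However, two of your concrete descriptions are wrong in a way that hides exactly where operation \textcircled{1} is indispensable.

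First, the base case. For an equioriented quiver $Q\colon 1\to 2\to\cdots\to n$, the matrix $\zeta_Q(V)$ is \emph{not} block antitriangular with a single product; it is block lower triangular (in the ordering $S_1^Q=S_2^Q=1\prec\cdots\prec n$) with identities on the diagonal and \emph{all} partial products $B_{x-1}\cdots B_y$ filling the strict lower triangle. The southwest submatrix $\zeta_Q(V)_{[b]\times[a]}$ with $b>a$ is therefore a full rectangle of partial products with no identity blocks whatsoever, so operation \textcircled{2} (permutations and sign changes) cannot possibly reduce it: one must use operation \textcircled{1} to clear the redundant rows and columns, pivoting on the path-composition blocks in row $[b]$ and column $[a]$. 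Similarly, in the all-left orientation $\zeta_Q(V)$ is block lower \emph{bidiagonal} (diagonal identities and subdiagonal $A_i$'s), and the southwest submatrix typically contains a staircase of single $A_i$'s that must be chained by repeated applications of \textcircled{1} (eliminating each $A_i$ against the adjacent identity) before one obtains the composition $A_a\cdots A_{b-1}$.

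Second, the inductive case $b\le n'$. You claim the extra rows appended to $\zeta_{Q'}(V')_{[\mu_{Q'}(b)]\times[\lambda_{Q'}(a)]}$ contain ``only identity blocks of $v_Q$ (and already-zero entries)'' and ``decouple under operation \textcircled{2}.'' This is false. Whenever $\mu_Q(b)$ is small enough that the southwest window includes the rows indexed $n'+1,\dots,n$, those rows carry genuine path-composition blocks in the column $[n']$ --- namely $B_s, B_{s+1}B_s,\dots,B_t\cdots B_s$ when $n'$ is a source, or $A_t,A_{t-1},\dots,A_s$ and their shifts when $n'$ is a sink. Clearing them is precisely an application of operation \textcircled{1}, pivoting on the identity in row $[n']$ (in the source case) or the antidiagonal identities of the appended rows (in the sink case). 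Moreover, whether these rows end up in the identity summand or the zero block of the final normal form depends on whether the columns $[n'+1],\dots,[n]$ are to the west or to the east of $[\lambda_Q(a)]$, i.e., on whether $n'$ is a sink or a source of $Q$. This is why the paper first branches on the source/sink type of $n'$ (two different block structures for the appended strip) and then on four sub-cases for the position of $[a,b]$, a distinction that your coarser two-case split cannot support.

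The rank identity at the end is immediate once the block reduction is known, so you are right that the whole content is in the matrix reduction; but without operation \textcircled{1} in both the base case and the $b\le n'$ case that reduction does not go through as you describe it.
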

\begin{proof}
  We use induction on the number of maximal paths $l+r$. If $l+r=1$, then the case is trivial (see also \cite{lakdegesche1998} and \cite{kinser_type_2015} for more details on this case).

  Now consider any quiver $Q$ of type A with $l+r>1$. Then we can define the subquiver $Q'$ as described above. Note that $n'$ is the rightmost vertex in $Q'$ and is a critical point in $Q$.

  There are two cases:

  \textbf{Case 1}: $n'$ is a source in $Q$.
$$
\begin{tikzcd}
\cdots & V_{n'} \arrow[l] \arrow[r, "B_s"] & V_{n'+1} \arrow[r, "B_{s+1}"] &\cdots \arrow[r, "B_t"] & V_n
\end{tikzcd}$$
The matrix $\zeta_Q(V)$ has the form
\begin{adjustwidth}{}{-2cm}
\begin{equation}\label{*}\hspace{-30mm}
\resizebox{1.2\textwidth}{!}{
$
\begin{blockarray}{llccccc|ccccc|}
 \begin{block}{ll[ccccc|ccccc|]}
 R_1\backslash t(R_1)&   &  *  &   &   &  &  * & 0  & 0  & \cdots&  &  0 \\
             R_2\backslash t(R_2)&       &    &   &   &  &   &  \vdots &   & & &\vdots\\
                           \vdots&\vdots & *   &   &   &  &  * &  0 & 0  &\cdots &  & 0\\
       \cdashline{1-12}[1pt/1pt]\multirow{5}*{$R_r\backslash t(R_r)\left.\rule{0mm}{11mm}\right\{$}
             &  n'      & I                          & 0      &  &   \cdots&  0      & 0                      & 0                     & &  0&  0\\
       \cdashline{2-12}[1pt/0pt]
             & n'+1     &  B_s                       &  0     &  &  \cdots &  0      & I                      & 0                     & \cdots  &  0  &0  \\
             & n'+2     &  B_{s+1}B_s                &  0     &  &         &  0      &  B_{s+1}               & I                     &   &  0  &0  \\
             & \vdots   &  \vdots                    &  \vdots&  &         &  \vdots &  \vdots                & \vdots                &  \ddots &    &   \\
             &n-1       &  B_{t-1}\cdots B_s         & 0      &  &  \cdots & 0       &  B_{t-1}\cdots B_{s+1} & B_{t-1}\cdots B_{s+2} &   & I   &0  \\
            \cdashline{1-12}[1pt/1pt]\{n\}
             &n         &B_tB_{t-1}\cdots B_s        & 0      &  &  \cdots &   0     &B_tB_{t-1}\cdots B_{s+1}&  B_tB_{t-1}\cdots B_{s+2}&\cdots&  B_t  & I \\
               \cdashline{2-12}[1pt/0pt]   L_l\backslash s(L_l)& \vdots & *  &   &  &  & * & 0 &  0 &  \cdots &  &0\\
            \vdots &  &   &   &  &  &  & \vdots&   &   &  &\vdots\\
                 L_1\backslash s(L_1) && *  &   &  &  & * & 0 &  0 &   &  &0\\
                  \end{block}
                         (S'_Q)  &        &n' & n'-1 &\cdots &  &   \cdots& n'+1 &n'+2 &   \cdots&  n-1&n &\\
 & &\BAmulticolumn{3}{c}{\ \ \ \underbrace{\rule{40mm}{0mm}}} &                              &        &    \BAmulticolumn{5}{c|}{\ \ \ \underbrace{\rule{67mm}{0mm}}} \\
 &(S_Q) &\BAmulticolumn{3}{c}{\ \ L_l\backslash t(L_l)}          & L_{l-1}\backslash t(L_{l-1}) & \cdots &    \BAmulticolumn{5}{c|}{\ \ R_r\backslash s(R_r)}        \\\end{blockarray}
$
}\hspace{-5mm}
\end{equation}\end{adjustwidth}
We indicate the label of each block row/column and its corresponding subsequence, next to the matrix. It is important to note that removing the block rows and columns $[n'+1],[n'+2],\cdots,[n]$ from $\zeta_Q(V)$ we will produce the matrix $\zeta_{Q'}(V')$, where the labels remain the same.

\textbf{Case 1.1}: For the interval $[a,b]$ with $b<n'$, we have $$a_{Q}^L=a_{Q'}^L\leq n',b_{Q}^R=b_{Q'}^R\leq n'.$$ By the definition of $\lambda_Q$, there must be $$\lambda_Q(a)=\lambda_{Q'}(a)\leq n'.$$ Note that $b_Q^R=n'$ implies that $b$ is a sink in $Q$ because $n'$ is a source and $b<n'$. Now, by the definition of $\mu_Q$ we have $$\mu_Q(b)=\mu_{Q'}(b)\leq n'.$$

Therefore, if $\mu_Q(b)\succ n$ in $S'_Q$, $$\zeta_Q(V)_{[\mu_Q(b)]\times[\lambda_Q(a)]}=\zeta_{Q'}(V')_{[\mu_{Q'}(b)]\times[\lambda_{Q'}(a)]}$$ if $\mu_Q(b)\prec n'+1$ in $S'_Q$,
                                                                    $$\zeta_Q(V)_{[\mu_Q(b)]\times[\lambda_Q(a)]}=\begin{blockarray}{lcccc}
                                                                    \begin{block}{l[cccc]}
                                                                     \mu_Q(b)& * &  &  & * \\
                                                                    \vdots&  \vdots &  &  & \vdots \\
                                                                      &I & 0 & \cdots & 0 \\
                                                                      \cdashline{2-5}[1pt/0pt]&B_s & 0 & \cdots & 0 \\
                                                                      &B_{s+1}B_s & 0 & \cdots & 0 \\
                                                                     & \vdots & \vdots &  & \vdots \\
                                                                      &B_t\cdots B_s & 0 & \cdots & 0 \\
                                                                      \cdashline{2-5}[1pt/0pt]&* &  &  & * \\
                                                                      &\vdots &  &  & \vdots\\
                                                                    \end{block}
                                                                    & & &\cdots & \lambda_Q(a)\end{blockarray}                                                                $$
Note that by deleting the block rows between the two solid lines in the matrix above, we obtain exactly $\zeta_{Q'}(V')_{[\mu_{Q'}(b)]\times[\lambda_{Q'}(a)]}$. Furthermore, the block rows between the two solid lines can be eliminated to zero by the row $[n']$.

Also note that $$M^Q_{[a,b]}(V)=M^{Q'}_{[a,b]}(V'),$$ $$\rank (v_{Q})_{[\mu_{Q}(b)]\times[\lambda_{Q}(a)]}=\rank (v_{Q'})_{[\mu_{Q'}(b)]\times[\lambda_{Q'}(a)]}.$$
By induction we can assume there is  $$\zeta_{Q'}(V')_{[\mu_{Q'}(b)]\times[\lambda_{Q'}(a)]}\stackrel{\mbox{\rm{\tiny\textcircled{1}+\textcircled{2}}}}{\Longrightarrow}\left[\begin{matrix}
                                                                                                                            M^{Q'}_{[a,b]}(V') &  &  \\
                                                                                                                             & I_{\rank (v_{Q'})_{[\mu_{Q'}(b)]\times[\lambda_{Q'}(a)]}}  &  \\
                                                                                                                             &  & 0
                                                                                                                          \end{matrix}\right]$$
By analyzing the forms of $\zeta_Q(V),M^{Q}_{[a,b]}$ and $v_Q$, we can immediately observe   $$\zeta_{Q}(V)_{[\mu_Q(b)]\times[\lambda_Q(a)]}\stackrel{\mbox{\rm{\tiny\textcircled{1}+\textcircled{2}}}}{\Longrightarrow}\left[\begin{matrix}
                                                                                                                            M^{Q}_{[a,b]}(V) &  &  \\
                                                                                                                             & I_{\rank (v_{Q})_{[\mu_{Q}(b)]\times[\lambda_{Q}(a)]}}  &  \\
                                                                                                                             &  & 0
                                                                                                                          \end{matrix}\right]$$with the operation $\rm{\textcircled{1}+\textcircled{2}}$ that depends only on $Q,a,b$.

We should make a remark that in this theorem the condition on the size of identity matrix naturally naturally holds because this rank is exactly obtained by the number of constant $1$ in the corresponding submatrix. Thus, in the rest of proof we do not check the size of the identity matrix anymore.

\textbf{Case 1.2}: For the interval $[a,b]$ with $b=n'$, we have that $b$ is a source in $Q$, $$a^L_Q=a^L_Q\leq a< n',$$ and
$$b_{Q'}^R=n', b_Q^R=n.$$

Then $$\lambda_Q(a)=\lambda_{Q'}(a)\leq n', \mu_Q(b)=\mu_{Q'}(b)=n'-1.$$
And we can observe in matrix (\ref{*}) that $$\zeta_Q(V)_{[n'-1,\lambda_Q(a)]}=\zeta_{Q'}(V')_{[n'-1,\lambda_{Q'}(a)]}.$$ Note that $M^{Q}_{[a,b]}(V')=M^{Q'}_{[a,b]}(V').$ Based on the induction assumption, we can immediately obtain the desired result.

\textbf{Case 1.3}: For interval $[a,b]$ with $a\leq n'$ and $b\geq n'$, we have $$a_{Q}^L=a_{Q'}^L\leq n',b_Q^R=b.$$
$$\begin{tikzcd}
\cdots & V_{n'} \arrow[l] \arrow[r, "B_s"] & V_{n'+1} \arrow[r, "B_{s+1}"] &\cdots \arrow[r, "B_{p}"]&V_b \arrow[r, "B_{p+1}"]&\cdots \arrow[r, "B_t"] &V_n
\end{tikzcd}$$
There is $$\zeta_Q(V)_{[\mu_Q(b)]\times[\lambda_Q(a)]}=\begin{blockarray}{lcccc}
 \begin{block}{l[cccc]}
              b &  B_{p}\cdots B_s &0 &\cdots &  0 \\
              b+1 &  B_{p+1}B_{p}\cdots B_s &0 & \cdots &0 \\
              \vdots &  \vdots & \vdots & & \vdots \\
              n &  B_{t}B_{t-1}\cdots B_s& 0 & \cdots& 0 \\
              \vdots & \BAmulticolumn{4}{@{\ [}c@{]}}{\zeta_{Q'}(V')_{[n'-1]\times[\lambda_{Q'}(a)]}}  \\
 \end{block}
 &n' &  \cdots &&\lambda_{Q}(a)\end{blockarray}.$$

Here we use the smaller brackets to mark out the scope of submatrix, and if $a=n'$ we take $\lambda_{Q'}(a):=t(L_l)$ to be the maximal vertex in sequence $S_{Q'}$. Obviously the rows $[b+1],\cdots,[n]$ can be eliminated by the row $[b]$ in this matrix.

And $$M^Q_{[a,b]}(V)=\begin{blockarray}{lcccc}
 \begin{block}{l[cccc]}
              b &  B_{p}\cdots B_s  &0 &\cdots &  0 \\
              \vdots & \BAmulticolumn{4}{@{\ [}c@{]}}{M_{[a,n']}^{Q'}(V')}  \\
 \end{block}
 &n' &  \cdots \end{blockarray}.$$

 Note that $\mu_{Q'}(n')=n'-1$. By induction, we can assume (if $a=n'$ then $M^{Q'}_{[a,n']}$ is an empty matrix): $$\zeta_{Q'}(V')_{[n'-1]\times[\lambda_{Q'}(a)]}\stackrel{\mbox{\rm{\tiny\textcircled{1}+\textcircled{2}}}}{\Longrightarrow}\left[\begin{matrix}
                                                                                                                            M^{Q'}_{[a,n']}(V') &  &  \\
                                                                                                                             & I &  \\
                                                                                                                             &  & 0
                                                                                                                          \end{matrix}\right]$$
 By the definition of operation $\rm{\textcircled{1}+\textcircled{2}}$, the first block column of $\zeta_{Q'}(V')_{[\mu_{Q'}(b)]\times[\lambda_{Q'}(a)]}$ remains invariant in the transformation process above. Additionally, we have $M^{Q}_{[a,n']}(V)=M^{Q'}_{[a,n']}(V')$. Thus, by eliminating the rows $[b+1],\cdots,[n]$ in $\zeta_Q(V)_{[\mu_Q(b)]\times[\lambda_Q(a)]}$, we can obtain
 $$ \zeta_{Q}(V)_{[\mu_Q(b)]\times[\lambda_Q(a)]}\stackrel{\mbox{\rm{\tiny\textcircled{1}+\textcircled{2}}}}{\Longrightarrow}
 \left[\begin{matrix}
 B_{p}\cdots B_s & 0 &\cdots & 0 & & \\
  \multicolumn{4}{@{\ [}c@{]}}{M^{Q}_{[a,n']}(V)} &  &  \\
  & & & &I  &  \\
  & & & &   & 0
    \end{matrix}\right]=\left[\begin{matrix}
                                                                                                                            M^{Q}_{[a,b]}(V) &  &  \\
                                                                                                                             & I &  \\
                                                                                                                             &  & 0
                                                                                                                          \end{matrix}\right]$$

\textbf{Case 1.4}: For the interval $[a,b]$ with $a>n'$, we have $$a^L_Q=a,b^R_Q=b.$$
$$\begin{tikzcd}
\cdots & V_{n'} \arrow[l] \arrow[r, "B_s"] & \cdots \arrow[r] &V_a \arrow[r, "B_{q}"]&\cdots \arrow[r, "B_{p}"]&V_b \arrow[r, "B_{p+1}"]&\cdots \arrow[r, "B_t"] & V_n
\end{tikzcd}$$
Moreover, in $Q_{[a,b]}$, $a$ is a source and $b$ is a sink. Then
$$\lambda_Q(a)=a, \mu_Q(b)=b.$$

We have $\zeta_Q(V)_{[b]\times[a]}=$ $$\begin{blockarray}{lccccccc}
 \begin{block}{l[ccccccc]}
              b &  B_{p}\cdots B_s &0 &\cdots &  0 & B_{p}\cdots B_{s+1} & \cdots &B_{p}\cdots B_{q} \\
              b+1 &  B_{p+1}B_{p}\cdots B_s &0 & \cdots &0& B_{p+1}B_{p}\cdots B_{s+1}& & B_{p+1}B_p\cdots B_{q} \\
              \vdots &  \vdots & \vdots & & \vdots& \vdots& & \vdots\\
              n &  B_{t}B_{t-1}\cdots B_s& 0 & \cdots& 0& B_{t}B_{t-1}\cdots B_{s+1} & \cdots& B_{t}B_{t-1}\cdots B_{q}  \\
              \vdots & \BAmulticolumn{4}{@{\ [}c@{]}}{\zeta_{Q'}(V')_{[n'-1]\times[n'-1]}}&0 &\cdots & 0 \\
 \end{block}
 &n' &  \cdots & &\cdots & n'+1& \cdots& a\end{blockarray}.$$
Here $s\leq q<p\leq t$. Therefore, with an operation \rm{\textcircled{1}}, we can eliminate the blocks to zero in the entire rows $[b],[b+1],\cdots,[n]$ and in the entire columns $[n'+1],\cdots,[a]$, except for the $([b],[a])$-block. Note that $\zeta_{Q'}(V')_{[n'-1]\times[n'-1]}$ has $N$ columns, and
$$\zeta_{Q'}(V')_{[n'-1]\times[n'-1]}\stackrel{\mbox{\rm{\tiny\textcircled{1}+\textcircled{2}}}}{\Longrightarrow}\left[\begin{matrix}
                                                                                                                         I_{\rank (v_{Q'})_{[n'-1]\times [n'-1]}} & 0
                                                                                                                       \end{matrix}\right].$$
Thus, $$\zeta_Q(V)_{[b]\times[a]}\stackrel{\mbox{\rm{\tiny\textcircled{1}+\textcircled{2}}}}{\Longrightarrow}\left[\begin{matrix}
                                                                                                                    B_{p}\cdots B_{q}  &  &  \\
                                                                                                                     & I &  \\
                                                                                                                     &  & 0
                                                                                                                  \end{matrix}\right].$$

\textbf{Case 2}: $n'$ is a sink in $Q$.

$$\begin{tikzcd}
\cdots \arrow[r] & V_{n'} & V_{n'+1} \arrow[l, "A_s"'] & \cdots \arrow[l, "A_{s+1}"'] & V_n \arrow[l, "A_t"']
\end{tikzcd}$$
Then $\zeta_Q(V)$ has the form
\begin{adjustwidth}{}{-1cm}
\begin{equation}\label{**}\hspace{-30mm}
\resizebox{1\textwidth}{!}{
$\begin{blockarray}{llccccc|ccc}
 \begin{block}{ll[|ccccc|ccc]}
R_1\backslash t(R_1)&   &  0  &   &   &  &  0                &*& &*                                     \\
\vdots&   &  \vdots  &   &   &  & \vdots        & & &                                     \\
R_r\backslash t(R_r)&\vdots & 0   &   &   &  &  0            & * & &*                                      \\
            \cdashline{2-10}[1pt/0pt] \{n\} &  n      & I          & 0      &   \cdots    & 0  &  0  &0 &  \cdots&0                          \\
           \cdashline{1-10}[1pt/1pt]\multirow{5}*{$L_l\backslash s(L_l)\left.\rule{0mm}{13mm}\right\{$}
            & n-1     &  A_t       &  I     &       & 0 &  0           &0 &  \cdots&0                                   \\
             & n-2     &  0         &A_{t-1} &    \ddots   &  0       &  0   &0 &  \cdots&0                      \\
             & \vdots  &  \vdots    &  &    \ddots   &         &  \vdots   &\vdots &  &\vdots                                   \\
             &n'+1     &  0         & 0      &       & A_{s+1} &  I      &0 &  \cdots&0                             \\
             \cdashline{2-10}[1pt/0pt] &n'       &  0        & 0       &    \cdots   & 0  &   A_s    &* &  &*                             \\
            \cdashline{1-10}[1pt/1pt] L_l\backslash s(L_l)& \vdots & 0  &   &  &  & 0          & &  &                                          \\
            \vdots &  & \vdots  &   &  &  & \vdots                 & &  &                                        \\
                 L_1\backslash s(L_1) & & 0  &   &  &  & 0         &* &  &*                                    \\
                  \end{block}
            (S'_Q)   &&n &n-1 &\cdots &n'+2&n'+1  & \cdots\cdots &\cdots &n'                                                            \\
               &&\BAmulticolumn{5}{c}{\underbrace{\rule{50mm}{0mm}}}&\cdots\cdots &\BAmulticolumn{2}{c}{\underbrace{\rule{20mm}{0mm}}}                                             \\
               &(S_Q)&\BAmulticolumn{5}{c}{L_l\backslash t(L_l)}  &\cdots\cdots &\BAmulticolumn{2}{c}{R_r\backslash s(R_r)}                                                    \\
\end{blockarray}
$
}\hspace{-15mm}
\end{equation}\end{adjustwidth}

Similarly to case 1, we can obtain $\zeta_{Q'}(V')$ by deleting the block rows and columns $[n'+1],\cdots,[n]$.

\textbf{Case 2.1}: For $[a,b]$ with $b<n'$,we have $$a^L_{Q}=a^L_{Q'}\leq n',b^R_{Q}=b^R_{Q'}\leq n'.$$ Since the minimal element in $S^{Q'}_2$ must be $s(L_{l-1})$, which is a source in $Q$, according to the definition of $\lambda_Q$ we have $$\lambda_Q(a)=\lambda_{Q'}(a)\leq n'.$$

Note that $b^R_{Q'}=n'$ implies that $b$ is a source in $Q$. Thus, $$\mu_Q(b)=\mu_{Q'}(b)\leq n'-1$$ with only one exception: $$\mu_Q(b)=n\neq \mu_{Q'}(b)=n',\mbox{ if $b=n'-1$ and $b$ is a source in $Q$.}$$

Therefore, we have $\zeta_Q(V)_{[\mu_Q(b)]\times[\lambda_Q(a)]}=[0,\zeta_{Q'}(V')_{[\mu_Q(b)]\times[\lambda_Q(a)]}]$ if $\mu_Q(b)\succ n'$  in $S'_Q$. And if $\mu_Q(b)\prec n-1$ in $S'_Q$, there is $\zeta_Q(V)_{[\mu_Q(b)]\times[\lambda_Q(a)]}=$ $$\begin{blockarray}{l|ccccc|ccc}
 \begin{block}{l[|ccccc|ccc]}
  \mu_Q(b) &  0  &   &   &  &  0                &*& &*                                     \\
   &  \vdots  &   &   &  & \vdots        & & &                                     \\
\vdots & 0   &   &   &  &  0            & * & &*                                      \\
            \cdashline{1-9}[1pt/0pt]  n      & I          & 0      &   \cdots    & 0  &  0  &0 &  \cdots&0                          \\
             n-1     &  A_t       &  I     &       & 0 &  0           &0 &  \cdots&0                                   \\
              n-2     &  0         &A_{t-1} &    \ddots   &  0       &  0   &0 &  \cdots&0                      \\
              \vdots  &  \vdots    &  &    \ddots   &         &  \vdots   &\vdots &  &\vdots                                   \\
             n'+1     &  0         & 0      &       & A_{s+1} &  I      &0 &  \cdots&0                             \\
          \cdashline{1-9}[1pt/0pt]    n'       &  0        & 0       &    \cdots   & 0  &   A_s    &* &  &*                             \\
            \vdots & 0  &   &  &  & 0          & &  &                                          \\
              & \vdots  &   &  &  & \vdots                 & &  &                                        \\
                 & 0  &   &  &  & 0         &* &  &*                                    \\
                  \end{block}
            &n &n-1 &\cdots &n'+2&n'+1  & \cdots\cdots &\cdots &\lambda_Q(a) \\
\end{blockarray}$$
(it has no block row to the north of row $[n]$ if $\mu_Q(b)=n$). By deleting the rows and columns between the solid lines in $\zeta_Q(V)$ we obtain $\zeta_{Q'}(V')$. An obvious operation \rm{\textcircled{1}} can be used to eliminate all the $A_i$ in the matrix above. Since $M^Q_{[a,b]}(V)=M^{Q'}_{[a,b]}(V')$, combining the induction assumption on $\zeta_{Q'}(V')$ we immediately have the desired result.

\textbf{Case 2.2}: For $[a,b]$ with $b=n'$, we have that $b$ is a sink in $Q$ and $$a^L_Q=a^L_{Q'}\leq n',b^R_{Q'}=n',b^R_Q=n.$$

Therefore, $$\lambda_Q(a)=\lambda_{Q'}(a),\mu_Q(b)=n.$$

Then $\zeta_Q(V)_{[\mu_Q(b)]\times[\lambda_Q(a)]}=$ $$\begin{blockarray}{l|ccccc|ccc}
\begin{block}{l[|ccccc|ccc]}
        \cdashline{1-9}[1pt/0pt]  n  & I          & 0      &   \cdots    & 0  &  0  &0 &  \cdots&0                          \\
           n-1&    A_t       &  I     &       & 0 &  0           &0 &  \cdots&0                                   \\
           n-2 &   0         &A_{t-1} &    \ddots   &  0       &  0   &0 &  \cdots&0                      \\
           \vdots &   \vdots    &  &    \ddots   &         &  \vdots   &\vdots &  &\vdots                                   \\
           n'+1 &  0         & 0      &       & A_{s+1} &  I      &0 &  \cdots&0                             \\
           \cdashline{1-9}[1pt/0pt]n' &     0        & 0       &    \cdots   & 0  &   A_s    & & &                             \\
          \vdots  & 0  &   &  &  & 0          & & \zeta_{Q'}(V')_{[n']\times[\lambda_{Q'}(a)]}&                                          \\
             &  \vdots  &   &  &  & \vdots                 & &  &                                        \\
              &    0  &   &  &  & 0         & &  &     \\
                  \end{block}
                  &n &n-1 &\cdots &n'+2&n'+1&&\cdots&\\
\end{blockarray}$$

Since $\mu_{Q'}(n')=n'$, by the induction we can assume $$\zeta_{Q'}(V')_{[n']\times[\lambda_{Q'}(a)]}\stackrel{\mbox{\rm{\tiny\textcircled{1}+\textcircled{2}}}}{\Longrightarrow}\left[\begin{matrix}
                                                                                                                                  M^{Q'}_{[a,n']}(V') &  &  \\
                                                                                                                                   & I & \\
                                                                                                                                   &  & 0
                                                                                                                                \end{matrix}\right].$$

Note that $M^{Q}_{[a,n']}(V)=M^{Q'}_{[a,n']}(V')$ and $b=n'$ at this time. Since the blocks $A_s,A_{s+1},\cdots,A_t$ in $\zeta_Q(V)_{[\mu_Q(b)]\times[\lambda_Q(a)]}$ obviously can be all eliminated, we obtain the result we desire.

\textbf{Case 2.3}: For the interval $[a,b]$ with $a\leq n', b>n'$.
$$\begin{tikzcd}
\cdots \arrow[r] & V_{n'} &  \cdots \arrow[l, "A_{s}"'] & V_b \arrow[l, "A_p"'] & \cdots \arrow[l, "A_{p+1}"'] & V_n \arrow[l, "A_t"']
\end{tikzcd}$$
We have $a^L_Q=a^L_{Q'}\leq n', b^R_Q=b$ and $b$ is a source in $Q_{[1,b]}$.
Hence, $$\lambda_Q(a)=\lambda_{Q'}(a),\mu_Q(b)=b-1.$$

We have  $\zeta_Q(V)_{[\mu_Q(b)]\times[\lambda_Q(a)]}=$ $$\resizebox{1\textwidth}{!}{
$\begin{blockarray}{l|ccccccc|ccc}
 \begin{block}{l[|ccccccc|ccc]}
             b-1     &  0 &\cdots &A_p       &  I     &       & 0 &  0           &0 &  \cdots&0                                   \\
              b-2     & 0 &\cdots& 0         &A_{p-1} &    \ddots   &  0       &  0   &0 &  \cdots&0                      \\
              \vdots  &  \vdots& & \vdots &    &   \ddots      &  \vdots  & &\vdots &  &\vdots                                   \\
             n'+1     & 0 &\cdots &0         & 0      &       & A_{s+1} &  I      &0 &  \cdots&0                             \\
        \cdashline{1-11}[1pt/0pt] n'       &  0 &\cdots&0        & 0       &    \cdots   & 0  &   A_s    & &  &                             \\
            \vdots & 0  &  & &  & & & 0         & & \zeta_{Q'}(V')_{[n']\times[\lambda_{Q'}(a)]} &                                          \\
              & \vdots  & &  &  & & & \vdots               &  & &                                        \\
                 & 0  &  & &  &\cdots & & 0        &  &    &                                \\
                  \end{block}
            &n &\cdots&b&b-1&\cdots &n'+2&n'+1  & \cdots\cdots &\cdots &\lambda_Q(a) \\
\end{blockarray}$}$$

Since $\mu_{Q'}(n')=n'$, by the induction we can assume $$\zeta_{Q'}(V')_{[n']\times[\lambda_{Q'}(a)]}\stackrel{\mbox{\rm{\tiny\textcircled{1}+\textcircled{2}}}}{\Longrightarrow}\left[\begin{matrix}
                                                                                                                                  M^{Q'}_{[a,n']}(V') &  &  \\
                                                                                                                                   & I &  \\
                                                                                                                                   &  & 0
                                                                                                                                \end{matrix}\right]$$
Note that $$M^Q_{[a,b]}(V)=\begin{blockarray}{lcccc}\begin{block}{l[cccc]}
                                  n'& A_s\cdots A_p & \BAmulticolumn{3}{c}{\overbracket[0.5pt][2pt]{\rule{12mm}{0mm}}}\\
                                    & 0 &   \BAmulticolumn{3}{c}{\multirow{2}{*}{$M^{Q'}_{[a,n']}(V')$}}\\
                             \vdots &\cdots &   \BAmulticolumn{3}{c}{} \\
                                  & 0 &  \BAmulticolumn{3}{c}{\raisebox{2mm}{$\underbracket[0.5pt][2pt]{\rule{12mm}{0mm}}$}} \\\end{block}
                                  & b &
                                 \end{blockarray}$$
where $(\pm) A_s\cdots A_p$ can be obtained in the $([n'],[b])$-block of $\zeta_Q(V)_{[\mu_Q(b)]\times[\lambda_Q(a)]}$ after performing an operation \rm{\textcircled{1}}. Therefore, we have $$\zeta_Q(V)_{[\mu_Q(b)]\times[\lambda_Q(a)]}\stackrel{\mbox{\rm{\tiny\textcircled{1}+\textcircled{2}}}}{\Longrightarrow}
\left[\begin{matrix}
 A_s\cdots A_p & \multicolumn{3}{c}{\overbracket[0.5pt][2pt]{\rule{12mm}{0mm}}}  &  &  \\
             0 & \multicolumn{3}{c}{\multirow{2}{*}{$M^{Q'}_{[a,n']}(V')$}} &  &  \\
        \vdots & \multicolumn{3}{c}{} &  &  \\
         0& \multicolumn{3}{c}{\raisebox{2mm}{$\underbracket[0.5pt][2pt]{\rule{12mm}{0mm}}$}} &  &  \\
         &  &  &  & I &  \\
         &  &  &  &  & 0
      \end{matrix}\right]=\left[\begin{matrix}
                                  M^Q_{[a,b]}(V) &  &  \\
                                   & I &  \\
                                   &  & 0
                                \end{matrix}\right]$$

\textbf{Case 2.4}: For $[a,b]$ with $a>n'$, we have $$a^L_Q=a,b^R_Q=b.$$ Moreover, $a$ is a sink in $Q_{[a,n]}$ and $b$ is a source in $Q_{[1,b]}$.
$$\begin{tikzcd}
\cdots \arrow[r] & V_{n'} &  \cdots \arrow[l, "A_{s}"'] & V_a \arrow[l] & \cdots \arrow[l, "A_{q}"'] & V_b \arrow[l, "A_p"']&\cdots \arrow[l]&V_n \arrow[l, "A_t"']
\end{tikzcd}$$
Hence, $$\lambda_Q(a)=a+1,\mu_Q(b)=b-1.$$
There is $\zeta_Q(V)_{[b-1]\times[a+1]}=$ $$\begin{blockarray}{lcccccccc}\begin{block}{l[cccccccc]}
b-1 &0&\cdots &A_p &I & \cdots & 0& 0\\
b-2 &0&\cdots &0 &A_{p-1} & \ddots &0 &0 \\
\vdots&\vdots&&& &\ddots&\ddots &\\
a+1&0&\cdots& 0&0 & \cdots&A_{s+1} &I\\
a& 0&\cdots&0 &0  & \cdots& 0 &A_s\\
\vdots &0&\cdots&0&0 &\cdots &0&0\\
     &\vdots& &\vdots &    &\vdots &\vdots\\
     & 0&\cdots&0  &0   &   \cdots &0      &0\\
\end{block}
&n&\cdots&b&b-1&\cdots&a+2&a+1\end{blockarray}$$

Obviously $M^Q_{[a,b]}(V)=A_pA_{p-1}\cdots A_s$ and $$\zeta_Q(V)_{[b-1]\times[a+1]}\stackrel{\mbox{\rm{\tiny\textcircled{1}+\textcircled{2}}}}{\Longrightarrow}\left[\begin{matrix}
                                                                                                                  A_pA_{p-1}\cdots A_s &  &  \\
                                                                                                                   & I &  \\
                                                                                                                   &  & 0
                                                                                                                \end{matrix}\right].$$

The proof is now complete, and the formula for ranks holds naturally.
\end{proof}

Now, as an consequence, we can also observe that both $\lambda_Q$ and $\mu_Q$ are injective maps. Moreover, it is worth mentioning the following fact.
\begin{lem}\label{conrank}
  $$\# \{(x,y)|\rank \zeta_Q(V)_{[x]\times[y]}=\rank (v_Q)_{[x]\times [y]},\forall V\in rep_Q(\mathbf{d})\}=\frac{n(n+1)}{2}.$$
\end{lem}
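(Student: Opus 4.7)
The plan is to count by complement. Among the $n^2$ block positions $([x],[y])\in S_1^Q\times S_2^Q$, I claim that the $\binom{n}{2}=\tfrac{n(n-1)}{2}$ positions in the image of the map $\Phi\colon [a,b]\mapsto ([\mu_Q(b)],[\lambda_Q(a)])$ are exactly those where $\rank \zeta_Q(V)_{[x]\times[y]}$ depends nontrivially on $V$; the complement then has size $n^2-\binom{n}{2}=\tfrac{n(n+1)}{2}$ as required.

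First, the injectivity of $\lambda_Q$ and $\mu_Q$ remarked just above the lemma makes $\Phi$ injective, so its image has exactly $\binom{n}{2}$ elements. At each image position $\Phi([a,b])$, Theorem \ref{main} yields
$$\rank \zeta_Q(V)_{[\mu_Q(b)]\times[\lambda_Q(a)]}-\rank(v_Q)_{[\mu_Q(b)]\times[\lambda_Q(a)]}=\rank M^Q_{[a,b]}(V).$$
For any $[a,b]\subset Q$ the sub-quiver $Q_{[a,b]}$ contains at least one arrow, so $M^Q_{[a,b]}$ has a non-zero block of independent affine coordinates and $\rank M^Q_{[a,b]}(V)\geq 1$ for some $V$. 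Thus every image position fails the rank equality, giving the upper bound: at most $n^2-\binom{n}{2}$ positions are constant-rank.

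The matching lower bound requires that every non-image position $([x],[y])$ satisfies $\rank \zeta_Q(V)_{[x]\times[y]}=\rank(v_Q)_{[x]\times[y]}$ for all $V$. My plan is to induct on the number $l+r$ of maximal paths, mirroring the case analysis for Theorem \ref{main}. The base case $l+r=1$ is a direct verification for an oriented path. In the inductive step we compare $Q$ with $Q'=Q_{[1,n']}$ using the block pictures $(\ref{*})$ and $(\ref{**})$: a non-image pair $([x],[y])$ of $Q$ either, after elimination against the identity blocks of the ``new'' rows and columns, reduces to a non-image pair of $Q'$ where induction applies, or lies entirely within the newly added rows and columns, in which case an explicit operation $\rm{\textcircled{1}+\textcircled{2}}$ cancels all variable entries and leaves only the identity pattern inherited from $v_Q$.

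The hard part is the exhaustive sub-case bookkeeping in this last step. For each of the splits (1.1)--(1.4) and (2.1)--(2.4) of the proof of Theorem \ref{main}, I must confirm that away from the single image position $\Phi([a,b])$, every variable block of $\zeta_Q(V)_{[x]\times[y]}$ has an identity-block pivot lying inside the same submatrix, so elimination succeeds without leaking outside. This parallels the earlier argument but aims at the complementary conclusion that no variable contribution survives at non-image positions; once it is in place, the counting identity $n^2-\binom{n}{2}=\tfrac{n(n+1)}{2}$ finishes the proof.
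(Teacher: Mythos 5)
Your complementary-counting framework is a valid and slightly more structural reformulation: you characterize the non-constant positions as exactly the image of $\Phi\colon[a,b]\mapsto([\mu_Q(b)],[\lambda_Q(a)])$ and count the complement, whereas the paper inducts on $l+r$ and directly enumerates the constant-rank positions, splitting them into those that transfer from $Q'$ plus an explicitly listed set of new ones, and then tallies. The part of your argument that is actually carried out is correct: Theorem \ref{main} reduces ``image position $\Rightarrow$ non-constant'' to producing $V$ with $\rank M^Q_{[a,b]}(V)>0$, which exists since $Q_{[a,b]}$ contains an arrow and every $d_x\geq 1$; combined with the injectivity of $\lambda_Q,\mu_Q$ this yields the upper bound $\#\{\text{constant}\}\leq n^2-\binom{n}{2}=\tfrac{n(n+1)}{2}$.

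The matching lower bound — that \emph{every} non-image position is constant-rank — is only announced, and it is the real content of the lemma. The paper's proof does precisely the work you defer, and the inductive bookkeeping is not routine: in the configuration (\ref{**}) the correspondence with constant-rank positions of $Q'$ already breaks down for $x=n'$ (the paper must single out $\zeta_{Q'}(V')_{[n']\times[n']}$ as the lone survivor in that row), and the new constant-rank positions form two separate explicit sets mixing new and old labels, such as $\{n'<x\leq n,\ y=n'\}$, rather than positions ``lying entirely within the newly added rows and columns.'' Moreover that phrase does not describe any submatrix: a southwest submatrix $\zeta_Q(V)_{[x]\times[y]}$ always contains the bottom rows and westernmost columns, and in (\ref{**}) the new columns sit at the far west, so essentially every such submatrix meets the new blocks — there is no clean dichotomy into ``reducible to $Q'$'' versus ``entirely new.'' Finally, the case split for this verification must be indexed by where $[x]$ and $[y]$ fall relative to $n'$ in $S_1^Q,S_2^Q$, not by the interval cases (1.1)--(2.4) of Theorem \ref{main}, which parametrize only the image positions. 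Until that enumeration and its count are actually carried out, what you have is a plan of the right shape with the central step still open.
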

\begin{proof}
  That is easy to see that $\zeta_Q(V)_{[x]\times[y]}$ is of constant rank independent with $A_\cdot,B_\cdot$ if and only if this rank is always $\rank (v_Q)_{[x]\times [y]}$.

  In the matrices (\ref{*}) and (\ref{**}) in the proof of Theorem \ref{main}, for those $1\leq x,y\leq n'$, we can see that $\zeta_Q(V)_{[x]\times[y]}$ is of a constant rank if and $\zeta_{Q'}(V')_{[x]\times[y]}$ is of constant rank. (In (\ref{**}), note that if $x=n'$ then the only constant-rank case is $\zeta_{Q'}(V')_{[n']\times[n']}$.)

  Besides these sub-matrices, the other constant-rank sub-matrices $\zeta_Q(V)_{[x]\times[y]}$ are given by
  $$\mbox{in (\ref{*}), }\{x\leq y|1\leq x\leq n, n'<y\leq n\}$$
  or $$\mbox{in (\ref{**}), }\{x< y-1\mbox{ or }x=n| 1\leq x\leq n, n'<y\leq n\}\cup\{n'<x\leq n,y=n'\}.$$

  Denote $n-n'=k$. Counting the numbers we have $$\mbox{in (\ref{*}), }kn-\frac{k^2-k}{2}=\frac{n(n+1)}{2}-\frac{n'(n'+1)}{2}$$
  $$\mbox{in (\ref{**}), }kn-\frac{k^2+k}{2}+k=\frac{n(n+1)}{2}-\frac{n'(n'+1)}{2}.$$

  Use induction we immediately obtain the lemma.
\end{proof}

We simply state that a southwest $[x]\times[y]$ submatrix (of $\zeta_Q,Z$, etc.) is \textbf{of constant rank} if $(x,y)$ meets the requirements outlined in the lemma above. Consider that $\frac{n(n+1)}{2}+\binom{n}{2}=n^2$, in which $\binom{n}{2}$ represents the count of intervals in $Q$, we infer the subsequent corollary.
\begin{coro}
  For $V,W\in rep_Q(\mathbf{d})$, the orbits $\mathcal{O}_V=\mathcal{O}_W$ if and only if $$\rank \zeta_Q(V)_{[x]\times[y]}=\rank \zeta_{Q}(W)_{[x]\times[y]}$$for all $x,y\in Q_0$.
\end{coro}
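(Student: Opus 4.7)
The plan is to combine Theorem \ref{main}, Lemma \ref{conrank}, and the classical degeneration theorem for type A quiver loci quoted at the start of Section~2. The key structural observation is that the $n^2$ pairs $(x,y) \in Q_0\times Q_0$ split into two groups whose sizes match perfectly: the $\tfrac{n(n+1)}{2}$ constant-rank pairs identified by Lemma \ref{conrank}, and the remaining $\binom{n}{2}$ pairs, which is exactly the number of intervals in $Q$. The assignment $[a,b]\mapsto ([\mu_Q(b)],[\lambda_Q(a)])$ is then a bijection from intervals to the non-constant-rank pairs, since $\lambda_Q$ and $\mu_Q$ are injective (as remarked right after Theorem \ref{main}) and, by Theorem \ref{main}, the rank of $\zeta_Q(V)_{[\mu_Q(b)]\times[\lambda_Q(a)]}$ genuinely depends on $V$ through $M^Q_{[a,b]}(V)$.

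For the forward direction, I would assume $\mathcal{O}_V=\mathcal{O}_W$. By the classical theorem in Section~2, $r_{[a,b]}(V)=r_{[a,b]}(W)$ for every interval. Applying the rank formula of Theorem \ref{main} to both $V$ and $W$ at the pair $([\mu_Q(b)],[\lambda_Q(a)])$ and subtracting the common term $\rank(v_Q)_{[\mu_Q(b)]\times[\lambda_Q(a)]}$ yields $\rank\zeta_Q(V)_{[\mu_Q(b)]\times[\lambda_Q(a)]}=\rank\zeta_Q(W)_{[\mu_Q(b)]\times[\lambda_Q(a)]}$ for every interval. For the remaining $\tfrac{n(n+1)}{2}$ pairs, Lemma \ref{conrank} guarantees that $\rank\zeta_Q(\,\cdot\,)_{[x]\times[y]}$ is the constant $\rank(v_Q)_{[x]\times[y]}$, so the equality is automatic. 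Together this covers all $n^2$ pairs.

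For the converse, assume the rank equalities for every $(x,y)\in Q_0\times Q_0$. In particular, they hold at the distinguished pairs $([\mu_Q(b)],[\lambda_Q(a)])$ ranging over the intervals of $Q$. Subtracting the constant $\rank(v_Q)_{[\mu_Q(b)]\times[\lambda_Q(a)]}$ from each side of the formula in Theorem \ref{main} gives $r_{[a,b]}(V)=r_{[a,b]}(W)$ for every interval $[a,b]\subset Q$, and the classical theorem then delivers $\mathcal{O}_V=\mathcal{O}_W$.

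There is no real obstacle here; the whole content is the bookkeeping already assembled above. The only point that deserves a brief verification during write-up is that the bijection claim really holds, i.e.\ that the image of $[a,b]\mapsto ([\mu_Q(b)],[\lambda_Q(a)])$ avoids the constant-rank region of Lemma \ref{conrank}. This is automatic from the shape of Theorem \ref{main}: a constant rank would force $\rank M^Q_{[a,b]}(V)$ to be independent of $V$ for all $V\in rep_Q(\mathbf{d})$, which fails since $M^Q_{[a,b]}$ is a nonzero matrix of affine coordinates whenever the interval is nonempty. Once that is noted, the counting identity $\tfrac{n(n+1)}{2}+\binom{n}{2}=n^2$ forces the bijection, and the corollary follows.
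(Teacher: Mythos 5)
Your proof is correct and follows essentially the same route as the paper, which simply points to the counting identity $\tfrac{n(n+1)}{2}+\binom{n}{2}=n^2$ together with Theorem \ref{main}, Lemma \ref{conrank}, and the degeneration theorem, without writing out the argument. You spell out the bookkeeping explicitly, including the one detail the paper leaves implicit (that the injective map $[a,b]\mapsto([\mu_Q(b)],[\lambda_Q(a)])$ lands in the complement of the constant-rank pairs, forced by the fact that $\rank M^Q_{[a,b]}(V)$ genuinely varies with $V$), which is a welcome clarification rather than a deviation.
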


Moreover, the proof of Theorem \ref{main} is purely matrix theoretical and it does not require the use of an inverse matrix. Therefore, we can replace the matrices $A_\cdot,B_\cdot$ by matrices of coefficients in other $k$-polynomial algebra and still obtain a similar corollary.
\begin{coro}\label{k-alg}
  Let $F_Q$ be the matrix obtained by replacing $A_i,B_i$ by $f_{\alpha_i},f_{\beta_i}$ respectively in the definition of $\zeta_Q(V)$ for $V\in rep_Q(\mathbf{d})$. Then for any interval $[a,b]\subset Q$, there is $$(F_Q)_{[\mu_Q(b)]\times[\lambda_Q(a)]}\stackrel{\mbox{\rm{\tiny\textcircled{1}+\textcircled{2}}}}{\Longrightarrow}
  \left[\begin{matrix}
          M^Q_{[a,b]} &  &  \\
           & I &  \\
           &  & 0
        \end{matrix}\right],$$where the identity matrix is of size $\rank (v_Q)_{[\mu_Q(b)]\times[\lambda_Q(a)]}$.
\end{coro}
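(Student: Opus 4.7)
The plan is to observe that the corollary is essentially a formal strengthening of Theorem~\ref{main}: we just need to rerun the same argument with the entries interpreted in a larger commutative ring. The key point, already emphasized right before the definition of the operations \textcircled{1} and \textcircled{2}, is that both operations consist only of block row/column additions where the coefficient is a block of the matrix itself, together with sign flips and permutations of rows/columns. None of these steps requires inverting a matrix, so each step makes sense verbatim over the polynomial ring $k[rep_Q(\mathbf{d})]$, not only over the field $k$.

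Concretely, I would proceed as follows. First, I would observe that $F_Q$ is, by construction, a matrix with entries in $k[rep_Q(\mathbf{d})]$, and that evaluating $F_Q$ at any $V\in rep_Q(\mathbf{d})$ recovers $\zeta_Q(V)$; in particular, the block structure of $F_Q$ and of $v_Q$ with respect to $(S_1^Q,S_2^Q)$ are identical. Next, I would go through the case analysis of the proof of Theorem~\ref{main} and note that in each of Cases 1.1--1.4 and 2.1--2.4 the sequence of block operations used to reduce $\zeta_Q(V)_{[\mu_Q(b)]\times[\lambda_Q(a)]}$ to the normal form is chosen once and for all in terms of the combinatorial data of $Q$, $a$, $b$ (and uses the induction hypothesis on $Q'$); the matrix entries $A_i,B_i$ are only multiplied, added, and sign-flipped, never inverted. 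Therefore, the same sequence of operations \textcircled{1}+\textcircled{2}, read formally, transforms $(F_Q)_{[\mu_Q(b)]\times[\lambda_Q(a)]}$ to
\[
\left[\begin{matrix} M^Q_{[a,b]} & & \\ & I & \\ & & 0 \end{matrix}\right].
\]
The induction is structurally identical to that of Theorem~\ref{main}; one just carries it out in the polynomial ring instead of the field.

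As a sanity check, I would note that specializing each such formal reduction at an arbitrary $V$ must recover the corresponding reduction in Theorem~\ref{main}, since the evaluation map $k[rep_Q(\mathbf{d})]\to k$, $f\mapsto f(V)$, is a ring homomorphism and commutes with all the elementary block operations allowed in \textcircled{1} and \textcircled{2}. This confirms that the $M^Q_{[a,b]}$ appearing as the top-left block is the correct one (its evaluation at $V$ is $M^Q_{[a,b]}(V)$), and that the identity block has the claimed size $\rank(v_Q)_{[\mu_Q(b)]\times[\lambda_Q(a)]}$ (since this rank is the same constant regardless of $V$).

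The only real obstacle is purely bookkeeping: one must verify, case by case, that every block coefficient that appears during the reduction of Theorem~\ref{main}, in particular those that arise when clearing the rows $[b+1],\dots,[n]$ via row $[b]$ in Case~1.4 or the analogous steps in Case~2, is literally a block of the matrix being transformed at that moment, and not some auxiliary expression requiring an inverse. Once this is checked, no new ideas are needed and the corollary follows directly from the proof of Theorem~\ref{main} read over $k[rep_Q(\mathbf{d})]$.
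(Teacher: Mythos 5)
Your proposal is correct and matches the paper's approach exactly: the paper likewise justifies the corollary by observing that the proof of Theorem~\ref{main} uses no matrix inverses and so goes through unchanged when the blocks $A_i,B_i$ are replaced by the coordinate matrices $f_{\alpha_i},f_{\beta_i}$ over the polynomial ring $k[rep_Q(\mathbf{d})]$.
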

\subsection{Zelevinsky map is a scheme-theoretical isomorphism}
\begin{defn}\label{wQ}
  We define the Zelevinsky permutation $w_Q(\mathbf{r})$, where $\mathbf{r}$ is a rank parameter, to be the unique $(S'_Q,S_Q)$-blocked permutation in $S_N$ satisfying:
  \begin{itemize}
    \item[(ZP1)] $\rank w(\mathbf{r})_{[x]\times [y]}=\rank \zeta_Q(V)_{[x]\times [y]}$ for any $V\in \mathcal{O}_\mathbf{r}, x,y\in Q_0$;
    \item[(ZP2)] $w_Q(\mathbf{r})$ is of Z-type (see the end of Section 3), corresponding to $(S'_Q,S_Q)$-blocking.
  \end{itemize}
\end{defn}

To obtain this unique $w_Q(\mathbf{r})$ for a given $\mathbf{r}$, one only needs to place an identity matrix of appropriate size (possibly smaller than the block) in each $([x],[y])$-block. These identity matrices should be arranged from southwest to northeast in the same row $[x]$, and from northwest to southeast in the same column $[y]$. Furthermore, for any $V\in \mathcal{O}_\mathbf{r}$, we have $$\rank w_Q(\mathbf{r})_{[x]\times [y]}=\rank \zeta_Q(V)_{[x]\times [y]}.$$
\begin{examp}\label{sp-example}
  Consider the quiver and representation in Example \ref{ex1}, as well as Example \ref{Zex}. Let the dimension vector be $\mathbf{d}$ with $d_i=2$ for $1\leq i\leq 4$ and $d_i=1$ for $5\leq i\leq 7$. If the representation $V$ has data $$B_1=B_2=\left[\begin{matrix}
                                                   1 & 0 \\
                                                   0 & 1
                                                 \end{matrix}\right],A_1=\left[\begin{matrix}
                                                                                 1 & 0 \\
                                                                                 0 & 0
                                                                               \end{matrix}\right],A_2=\left[\begin{matrix}
                                                                                                               0 \\
                                                                                                               1
                                                                                                             \end{matrix}\right],B_3=A_3=[1],$$
Then its rank parameter $\mathbf{r}$ is given by $$ \begin{tabular}{|c|c|c|c|c|c|c|}
                                          \hline
                                          \diagbox{$b$}{$r_{[a,b]}$}{$a$}
                                              & $1$ & $2$ & $3$ & $4$ & $5$ & $6$  \\\hline
                                          $2$ & $2$ &     &     &     &     &    \\\hline
                                          $3$ & $2$ & $2$ &     &     &     &    \\\hline
                                          $4$ & $2$ & $2$ & $1$ &     &     &    \\\hline
                                          $5$ & $2$ & $2$ & $0$ & $1$ &     &    \\\hline
                                          $6$ & $3$ & $3$ & $1$ & $1$ & $1$ &    \\\hline
                                          $7$ & $3$ & $3$ & $1$ & $2$ & $1$ & $1$  \\\hline
                                        \end{tabular}$$
And the southwest submatrices $\zeta_Q(V)_{[x]\times[y]}$ has ranks
$$\begin{tabular}{|c|c|c|c|c|c|c|c|}
                                          \hline
\diagbox{$x\in S'_Q$}{$y\in S_Q$}
                                              & $7$ & $5$ & $4$ & $1$ & $2$ & $3$  & $6$ \\\hline
                                          $1$ & $1$ & $2$ & $4$ & $6$ &$ 8$ & $10$ & $11$ \\\hline
                                          $2$ & $1$ & $2$ & $4$ &$2+4$& $6$ & $8$  &  $9$ \\\hline
                                          $5$ & $1$ & $2$ & $4$ &$2+4$&$2+4$& $6$  &$7$     \\\hline
                                          $7$ & $1$ &$1+1$&$1+3$&$3+3$&$3+3$& $1+5$& $6$    \\\hline
                                          $6$ &$1+0$&$2+0$&$1+2$&$3+2$&$3+2$& $1+4$&  $5$   \\\hline
                                          $4$ & $0$ &$1+0$&$0+2$&$2+2$&$2+2$&   $4$&   $4$  \\\hline
                                          $3$ & $0$ & $0$ &$1+0$&$2+0$&$2+0$& $2$  &   $2$  \\\hline
                                        \end{tabular}$$
where $V$ can be replaced by any representation in $\mathcal{O}_{\mathbf{r}}$ and, for example, ``$2+4$" located at the position $x=2,y=1$ means $$\rank \zeta_Q(V)_{[2]\times[1]}=\rank M_{[1,2]}^Q(V)+\rank (v_Q)_{[2]\times[1]}=2+4.$$ Hence, by comparing values from adjacent positions in the table, we obtain that the numbers of $1$'s in the $([x],[y])$-block of $w_Q(\mathbf{r})$ should be $$\begin{tabular}{|c|c|c|c|c|c|c|c|}
                                          \hline
\diagbox{$x\in S'_Q$}{$y\in S_Q$}
                                              & $7$& $5$&$4$ &$1$ &$2$ & $3$  & $6$ \\\hline
                                          $1$ &    &    &    &    &$ 2$&    &    \\\hline
                                          $2$ &    &    &    &    &    & $2$  &     \\\hline
                                          $5$ &    &    &    &    &    &     &$1$     \\\hline
                                          $7$ &    &    & $1$&    &    &     &       \\\hline
                                          $6$ & $1$&    &    &    &    &   &       \\\hline
                                          $4$ &    & $1$&    & $1$&    &     &       \\\hline
                                          $3$ &    &    &$1$ & $1$&    &     &       \\\hline
                                        \end{tabular}$$where the blanks are zero. Moreover, we require the permutation $w_Q(\mathbf{r})$ to follow the form defined in Definition \ref{wQ}. Then $w_Q(\mathbf{r})$ is uniquely determined, it is
                                        $$\left[\begin{array}{c:c:cc:cc:cc:cc:c}
                                              &   &   &   &   &   & 1  & 0  &   &   &   \\
                                              &   &   &   &   &   & 0  & 1  &   &   &   \\ \cdashline{1-11}
                                              &   &   &   &   &   &   &   &  1 & 0  &   \\
                                              &   &   &   &   &   &   &   &  0 &  1 &   \\ \cdashline{1-11}
                                              &   &   &   &   &   &   &   &   &   &  1 \\ \cdashline{1-11}
                                              &   &  1 & 0  &   &   &   &   &   &   &   \\\cdashline{1-11}
                                            1  &  &   &   &   &   &   &   &   &   &   \\ \cdashline{1-11}
                                              & 0 &   &   &  1& 0  &   &   &   &   &   \\
                                              & 1 &   &   & 0 &  0 &   &   &   &   &   \\ \cdashline{1-11}
                                              &   & 0 & 0 & 0 &  1 &   &   &   &   &   \\
                                              &   & 0 & 1 & 0 &  0 &   &   &   &   &
                                          \end{array}\right].$$
\end{examp}
Recall that $\zeta_Q:rep_Q(\mathbf{d})\to \mathbb{A}^{v_Q}$ induces the homomorphism of $k$-algebras $$\zeta_Q^*:k[\mathbb{A}^{v_Q}]\to k[rep_Q(\mathbf{d})].$$
In $\mathbb{A}^{v_Q}$, the Kazhdan-Lusztig variety $Y^{v_Q}_{w_Q(\mathbf{r})}$ is defined by the ideal
$$I_{w_Q(\mathbf{r})}=<\mbox{minors of size $\rank w_Q(\mathbf{r})_{[x]\times [y]}+1$ in $Z_{[x]\times[y]}$}|x,y\in Q_0>,$$
where $Z$ is the corresponding matrix of affine coordinates in $\mathbb{A}^{v_Q}$ as defined in the end of previous section.
In $rep_Q(\mathbf{d})$, the quiver locus $\overline{\mathcal{O}_\mathbf{r}}$ is defined by the ideal
$$I_\mathbf{r}=<\mbox{minors of size $r_{[a,b]}+1$ in $M^Q_{[a,b]}$}|[a,b]\subset Q>$$
\begin{lem}\label{1}
  $\zeta^*_Q(I_{w_Q(\mathbf{r})})\subset I_\mathbf{r}$.
\end{lem}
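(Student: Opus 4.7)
The plan is to show that $\zeta_Q^*$ sends every generator of $I_{w_Q(\mathbf{r})}$ into $I_\mathbf{r}$. Since a generator is a minor of size $k_{x,y}+1$ in $Z_{[x]\times[y]}$ with $k_{x,y}:=\rank w_Q(\mathbf{r})_{[x]\times[y]}$, and $\zeta_Q^*$ specializes $Z$ to the function-valued matrix $F_Q$ of Corollary \ref{k-alg}, the task reduces to verifying that the corresponding $(k_{x,y}+1)$-minor of $(F_Q)_{[x]\times[y]}$ belongs to $I_\mathbf{r}$ for every pair $(x,y)\in Q_0\times Q_0$.

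I would then split along the dichotomy provided by Lemma \ref{conrank}. For each constant-rank pair $(x,y)$, evaluating (ZP1) at any $V\in\mathcal{O}_\mathbf{r}$ gives $k_{x,y}=\rank\zeta_Q(V)_{[x]\times[y]}=\rank(v_Q)_{[x]\times[y]}$; since $(F_Q)_{[x]\times[y]}$ has this same constant rank at every point of $rep_Q(\mathbf{d})$, every $(k_{x,y}+1)$-minor of $(F_Q)_{[x]\times[y]}$ vanishes identically and so is trivially in $I_\mathbf{r}$. This disposes of $\tfrac{n(n+1)}{2}$ of the pairs.

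For the remaining $\binom{n}{2}$ non-constant-rank pairs, the injectivity of $\lambda_Q$ and $\mu_Q$ (noted right after Theorem \ref{main}) combined with the count in Lemma \ref{conrank} forces each such $(x,y)$ to be of the form $(\mu_Q(b),\lambda_Q(a))$ for a unique interval $[a,b]\subset Q$. Corollary \ref{k-alg} then furnishes an operation of type \textcircled{1}+\textcircled{2} carrying $(F_Q)_{[\mu_Q(b)]\times[\lambda_Q(a)]}$ to $\mathrm{diag}\bigl(M^Q_{[a,b]},\,I_s,\,0\bigr)$, where $s=\rank(v_Q)_{[\mu_Q(b)]\times[\lambda_Q(a)]}$. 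Each such operation is a product of elementary block row/column transformations with polynomial-valued coefficients, invertible over $k[rep_Q(\mathbf{d})]$, so it preserves the ideal of $t$-minors for every $t$. By (ZP1) together with the rank formula of Theorem \ref{main}, $k_{x,y}=r_{[a,b]}+s$; and any $(r_{[a,b]}+s+1)$-minor of the block-diagonal matrix factors, through the block structure, as $\pm$ a minor of $M^Q_{[a,b]}$ of size at least $r_{[a,b]}+1$, which by Laplace expansion lies in $I_\mathbf{r}$. This closes the non-constant-rank case and completes the argument.

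The main delicate point is keeping careful track of the fact that operations \textcircled{1}+\textcircled{2} genuinely preserve the ideal of minors even though their defining coefficients are polynomials (not scalars). This is precisely why the list of permitted elementary transformations in Section 4.1 was stated without any matrix inversion, and why Corollary \ref{k-alg} was formulated over the polynomial ring $k[rep_Q(\mathbf{d})]$ rather than pointwise over $k$: once translated into multiplication by invertible block-elementary matrices with polynomial entries, the Cauchy--Binet-type preservation of $t$-minor ideals is immediate, and everything else becomes bookkeeping between the two combinatorial sides of the correspondence $[a,b]\leftrightarrow(\mu_Q(b),\lambda_Q(a))$.
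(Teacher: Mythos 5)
Your proof is correct, but it takes a genuinely different route from the paper's. The paper's argument is a short vanishing argument: the pullback of a generator of $I_{w_Q(\mathbf{r})}$ is a minor of $(F_Q)_{[x]\times[y]}$ of size $\rank w_Q(\mathbf{r})_{[x]\times[y]}+1$, which (by (ZP1) and the degeneration theorem) vanishes on all of $\overline{\mathcal{O}_\mathbf{r}}$, hence lies in the defining ideal $I_\mathbf{r}$; this leans on the Riedtmann--Zwara identification of $I_\mathbf{r}$ with the full radical ideal of $\overline{\mathcal{O}_\mathbf{r}}$. You instead give an explicit ideal-membership argument: you split the $n^2$ pairs $(x,y)$ by Lemma \ref{conrank} into $\tfrac{n(n+1)}{2}$ constant-rank pairs, where the pulled-back minors vanish identically as polynomials, and $\binom{n}{2}$ non-constant-rank pairs, which you identify bijectively with intervals $[a,b]$ via the injectivity of $\lambda_Q,\mu_Q$ and a count; for those you invoke Corollary \ref{k-alg} and the fact that operations $\textcircled{1}+\textcircled{2}$ amount to multiplication by block-elementary matrices invertible over $k[rep_Q(\mathbf{d})]$, reducing a $(k_{x,y}+1)$-minor of $(F_Q)_{[\mu_Q(b)]\times[\lambda_Q(a)]}$ to $(\geq r_{[a,b]}+1)$-minors of $M^Q_{[a,b]}$, which lie in $I_\mathbf{r}$ by Laplace expansion. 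Your route is longer but more constructive: it writes each pullback explicitly in terms of the stated generators of $I_\mathbf{r}$ rather than passing through a Nullstellensatz-style vanishing step, and it makes precise exactly how Theorem \ref{main} and Lemma \ref{conrank} account for every generator of $I_{w_Q(\mathbf{r})}$; the paper's argument is shorter but defers all the combinatorics to (ZP1). Both are sound.
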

\begin{proof}
  For the generators of $I_{w_Q(\mathbf{r})}$, their composition with $\zeta_Q$ is the corresponding minors of $(F_Q)_{[x]\times[y]}$. Since $rank (w_Q(\mathbf{r}))_{[x]\times [y]}=\zeta_Q(V)_{[x]\times [y]}$, these minors vanish on any $V\in \overline{\mathcal{O}_\mathbf{r}}$.
\end{proof}

By Theorem \ref{main}, we can observe the following fact.
\begin{lem}\label{2}
  For any generator $$\mbox{minor of size $r_{[a,b]}+1$ in $M^Q_{[a,b]}$},[a,b]\subset Q,$$ of $I_\mathbf{r}$, it has preimage in $I_{w_Q(\mathbf{r})}$ under $\zeta_Q^*$.
\end{lem}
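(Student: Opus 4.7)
The plan is to work entirely through Corollary \ref{k-alg}, using the fact that the Zelevinsky permutation was \emph{designed} so that the rank conditions on the Kazhdan-Lusztig side, when pulled back, recover the rank conditions on the quiver side. Fix an interval $[a,b]\subset Q$, set $k=\rank (v_Q)_{[\mu_Q(b)]\times[\lambda_Q(a)]}$, and consider the block position $([\mu_Q(b)],[\lambda_Q(a)])$. By Theorem \ref{main} applied to any $V\in\mathcal{O}_{\mathbf{r}}$ together with Definition \ref{wQ}, we have
$$\rank w_Q(\mathbf{r})_{[\mu_Q(b)]\times[\lambda_Q(a)]}=r_{[a,b]}+k.$$
Hence the minors of size $r_{[a,b]}+k+1$ in $Z_{[\mu_Q(b)]\times[\lambda_Q(a)]}$ belong to the generating set of $I_{w_Q(\mathbf{r})}$, and since $\zeta^*_Q$ is a $k$-algebra homomorphism sending the entries of $Z$ to the corresponding entries of $F_Q$, their images are exactly the $(r_{[a,b]}+k+1)$-minors of $(F_Q)_{[\mu_Q(b)]\times[\lambda_Q(a)]}$. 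I therefore aim to show that every $(r_{[a,b]}+1)$-minor of $M^Q_{[a,b]}$ lies in the ideal generated by these particular minors of $F_Q$.

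The bridge is Corollary \ref{k-alg}, which says
$$(F_Q)_{[\mu_Q(b)]\times[\lambda_Q(a)]}\stackrel{\mbox{\rm{\tiny\textcircled{1}+\textcircled{2}}}}{\Longrightarrow}\left[\begin{matrix} M^Q_{[a,b]} & & \\ & I_k & \\ & & 0 \end{matrix}\right].$$
The key observation I will exercise is that operations \textcircled{1} and \textcircled{2} correspond to left- and right-multiplication by unipotent block-elementary matrices or by signed block permutations, all of which are invertible over the coefficient ring $k[rep_Q(\mathbf{d})]$ (this is precisely why the paper emphasizes that no inverses are used in performing these operations). By the Cauchy-Binet formula, left or right multiplication of a matrix by a unit over a commutative ring preserves the ideal of $m$-minors for every $m$. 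Consequently, the ideal of $(r_{[a,b]}+k+1)$-minors of $(F_Q)_{[\mu_Q(b)]\times[\lambda_Q(a)]}$ coincides with the ideal of $(r_{[a,b]}+k+1)$-minors of the block diagonal matrix on the right-hand side above.

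The final step is a direct minor-expansion on the block diagonal matrix $\mathrm{diag}(M^Q_{[a,b]},I_k,0)$. A non-vanishing $(r_{[a,b]}+k+1)$-minor must avoid rows and columns from the zero block, and the part coming from $I_k$ contributes only if we pick a matched set of rows and columns of $I_k$, in which case the minor equals (up to sign) the product of the corresponding submatrix determinant of $M^Q_{[a,b]}$ and $1$. Taking all $k$ rows and all $k$ columns of the identity block, one recovers each $(r_{[a,b]}+1)$-minor of $M^Q_{[a,b]}$ as a specific $(r_{[a,b]}+k+1)$-minor of the block diagonal matrix, hence as an element of the ideal generated by $\zeta^*_Q$-images of generators of $I_{w_Q(\mathbf{r})}$. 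This yields the desired preimage.

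The one point that requires care, and which I view as the main (if modest) obstacle, is verifying that the block-elementary matrices realizing \textcircled{1}+\textcircled{2} are genuinely invertible over $k[rep_Q(\mathbf{d})]$ (not merely over the field of fractions), so that the Cauchy-Binet argument applies. This is immediate once one writes down those elementary matrices explicitly, since the diagonal blocks are identity matrices and the unique off-diagonal block is a matrix of coordinate functions, making the whole unipotent, but it is the step that needs to be stated carefully to license the preservation of minor ideals in the polynomial ring.
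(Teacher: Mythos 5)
Your proposal is correct, and it takes the route the paper intends: the paper gives Lemma \ref{2} no proof at all, merely asserting that it follows from Theorem \ref{main}, so you have supplied exactly the elaboration that the paper leaves implicit. Two small remarks. First, you phrase the unipotence discussion as if each operation \textcircled{1} involves ``the unique off-diagonal block''; in fact an operation \textcircled{1} or \textcircled{2} is a composite of several atomic block-elementary transformations, each of which is individually left- or right-multiplication by a matrix that is unipotent (or a signed permutation), and the relevant fact is that the total product is therefore invertible over $k[rep_Q(\mathbf{d})]$ -- your Cauchy--Binet argument then preserves the ideal of $(r_{[a,b]}+k+1)$-minors, as you want. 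Second, the last step (lifting the conclusion from ``lies in the ideal generated by $\zeta_Q^*$-images of generators'' to ``has a preimage in $I_{w_Q(\mathbf{r})}$'') tacitly uses that $\zeta_Q^*$ is surjective so that coefficients in the $k[rep_Q(\mathbf{d})]$-linear combination can be lifted to $k[\mathbb{A}_Q]$; this is asserted in the paper when $\zeta_Q^*$ is introduced, and it is worth naming explicitly since it is the step that converts the containment of ideals into the existence of a preimage. With these minor clarifications your argument is complete and matches the paper's intent.
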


The following lemma in linear algebra is straightforward.
\begin{lem}\label{X.Y.XY}
  Let $r,r_1,r_2$ be any positive integers, and $X=(x_{ij}),Y=(y_{ij})$ be two matrices of size $r_1\times r$ and $r\times r_2$ respectively. Denote $XY=(z_{ij})$. Then the polynomials given by rule of matrix multiplication $$z_{ij}-\sum_{t=1}^{r}x_{it}y_{tj}$$ can be viewed as minors of size $r+1$ in the matrix
$$\left[\begin{matrix}
                                         Y & I_{r}\\
                                         XY & X
                                       \end{matrix}\right]$$
\end{lem}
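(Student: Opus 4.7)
The plan is direct: for each choice of indices $i \in \{1,\ldots,r_1\}$ and $j \in \{1,\ldots,r_2\}$, I will exhibit an explicit $(r+1) \times (r+1)$ submatrix of the displayed block matrix whose determinant equals $\pm\bigl(z_{ij} - \sum_{t=1}^r x_{it}y_{tj}\bigr)$.

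First I would choose the rows of the submatrix by taking all $r$ rows of the top block $[\,Y\ I_r\,]$ together with the $i$-th row of the bottom block $[\,XY\ X\,]$. Then I would choose the columns by taking the $j$-th column of the left block (the columns indexing $Y$ and $XY$) together with all $r$ columns of the right block (the columns indexing $I_r$ and $X$). The resulting $(r+1)\times(r+1)$ submatrix is
$$S_{ij}=\begin{pmatrix} y_{1j} & 1 & 0 & \cdots & 0 \\ y_{2j} & 0 & 1 & \cdots & 0 \\ \vdots & \vdots & \vdots & \ddots & \vdots \\ y_{rj} & 0 & 0 & \cdots & 1 \\ z_{ij} & x_{i1} & x_{i2} & \cdots & x_{ir} \end{pmatrix}.$$

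Next I would perform the determinant-preserving row operations that subtract $x_{it}$ times row $t$ from the bottom row, for each $t=1,\ldots,r$. Because the top $r$ rows carry the identity block in columns $2,\ldots,r+1$, these operations zero out every $x_{it}$ in the last row and replace the $(r+1,1)$ entry by exactly $z_{ij} - \sum_{t=1}^r x_{it}y_{tj}$. Expanding along the bottom row, which now has only one nonzero entry, leaves a factor of $\det(I_r)=1$ from the upper-right corner, so $\det(S_{ij}) = \pm\bigl(z_{ij} - \sum_{t=1}^r x_{it}y_{tj}\bigr)$ with sign $(-1)^r$. This realizes the polynomial as the required size-$(r+1)$ minor.

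The argument is a short piece of linear algebra with no genuine obstacle; the only point requiring care is bookkeeping, since the rows of $S_{ij}$ are drawn from two different row-blocks of the ambient matrix and its columns from two different column-blocks. No induction or auxiliary tool is needed.
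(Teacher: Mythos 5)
Your proof is correct. The paper itself states this lemma without proof, calling it ``an easy lemma in linear algebra,'' so there is no paper argument to compare against; your construction supplies exactly the missing verification. The choice of submatrix (all $r$ rows of the top block, row $i$ of the bottom block, column $j$ of the left block, all $r$ columns of the right block) is the natural one, the row reduction using the $I_r$ block is valid, and the Laplace expansion along the final row yields $\det(S_{ij}) = (-1)^r\bigl(z_{ij} - \sum_{t=1}^r x_{it}y_{tj}\bigr)$ as you claim. One small remark on reading the lemma's statement: the block written as $XY$ is to be understood as a matrix of independent variables $z_{ij}$ (so that the asserted polynomial is not identically zero); your proof treats it correctly in this way.
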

\begin{lem}\label{3}
$\ker\zeta^*_Q\subset I_{w_Q(\mathbf{r})}$.
\end{lem}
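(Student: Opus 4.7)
The plan is to produce an explicit generating set of $\ker\zeta^*_Q$ and verify that each generator lies in $I_{w_Q(\mathbf{r})}$ by realizing it as a minor (possibly augmented) of the coordinate matrix $Z$ on $\mathbb{A}_Q$.

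I would first characterize $\ker\zeta^*_Q$. Since $\zeta^*_Q$ substitutes each affine coordinate $z_{pq}$ of $k[\mathbb{A}_Q]$ with the $(p,q)$-entry of the template matrix $\zeta_Q(V)$, and since the arrow-block entries of $\zeta_Q(V)$ are algebraically independent generators of $k[rep_Q(\mathbf{d})]$, the kernel is generated by two families of polynomials: (K1) the single coordinates $z_{pq}$ at positions lying in a forced-zero block of $\zeta_Q(V)$ (a no-path block or a right-to-left path block of length at least $2$); and (K2) the polynomial differences $z_{pq}-(B_j\cdots B_i)_{pq}$ at positions lying in a long left-to-right product block, with the right-hand side expressed in the free arrow coordinates.

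Next, I would induct on the path length for type-(K2) generators. In the base case $j=i+1$, Lemma \ref{X.Y.XY} applied with $X$ the $B_j$-block, $Y$ the $B_i$-block, and $I_{d_{x-1}}$ the mandatory diagonal identity of $v_Q$ at $([x-1],[x-1])$ realizes each scalar relation as a $(d_{x-1}+1)\times(d_{x-1}+1)$ minor of the $2\times 2$ block configuration inside $Z$ formed by the blocks at $([x-1],[y])$, $([x-1],[x-1])$, $([x],[y])$, and $([x],[x-1])$. For longer paths I would write $B_j\cdots B_i=B_j\cdot(B_{j-1}\cdots B_i)$ and reduce modulo the inductively-handled shorter relation, so that the $Y$-block becomes a single coordinate which, modulo the kernel generator already placed in $I_{w_Q(\mathbf{r})}$, equals the shorter product. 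Type-(K1) generators begin as $1\times 1$ minors.

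I would then enlarge each such minor to a bona fide generator of $I_{w_Q(\mathbf{r})}$ by augmentation. Using the Z-type structure of $w_Q(\mathbf{r})$ (property (3): nonzero entries in each block form sub-identities arranged southwest-to-northeast across each block row and northwest-to-southeast within each block column), I would choose a southwest window $Z_{[x']\times[y']}$ enclosing the minor and pick a disjoint family of $1$-entries of $w_Q(\mathbf{r})$ inside the window whose rows and columns avoid those of the original minor. Adjoining these $1$'s enlarges the minor to size $\rank w_Q(\mathbf{r})_{[x']\times[y']}+1$ while preserving its determinant up to sign, exhibiting our generator inside $I_{w_Q(\mathbf{r})}$. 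As a redundancy check, one may invoke Corollary \ref{k-alg} to confirm that at the level of southwest windows covered by the correspondence $[a,b]\mapsto([\mu_Q(b)],[\lambda_Q(a)])$ the minors coincide with minors of $M^Q_{[a,b]}$ up to operations \textcircled{1}+\textcircled{2}.

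The main obstacle is the augmentation step: verifying that for every generator of $\ker\zeta^*_Q$ a suitable southwest window and a disjoint family of identity entries of $w_Q(\mathbf{r})$ of the correct total size exist. This is a combinatorial statement about the Z-type arrangement of $w_Q(\mathbf{r})$ and the placement of the $2\times 2$ configurations and zero blocks of $\zeta_Q(V)$ produced by Lemma \ref{X.Y.XY}. I would prove it by induction on the number $l+r$ of maximal paths of $Q$, mirroring the structure of the proof of Theorem \ref{main}: at each inductive step one records how attaching a new maximal path reorganizes the identity positions of $w_Q(\mathbf{r})$ both in case 1 (where $n'$ is a source) and case 2 (where $n'$ is a sink), and checks that the augmentation procedure extends compatibly.
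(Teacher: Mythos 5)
Your proposal follows the same overall strategy as the paper: identify the generators of $\ker\zeta^*_Q$ as (i) coordinate functions on the forced-zero positions of $\zeta_Q(V)$ that remain free in $Z$ and (ii) matrix-multiplication relations arising from the long left-to-right product blocks; realize the latter via Lemma~\ref{X.Y.XY}; and then induct on the number $l+r$ of maximal paths, handling at each step only the ``new'' relations born in the block rows/columns $[n'+1],\dots,[n]$ added when passing from $Q'$ to $Q$. This matches the paper's two-case analysis of the special area of the matrices~(\ref{*}) and~(\ref{**}).

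Two remarks on where your route diverges or leaves a gap. First, the inner induction on path length is unnecessary: the natural generators of $\ker\zeta^*_Q$ are the \emph{recursive} relations $z_{([n'+k],[n'])}-z_{([n'+k],[n'+k-1])}\,z_{([n'+k-1],[n'])}$, which are already minors of the $2\times2$ block configuration that Lemma~\ref{X.Y.XY} describes (with $Y$ the affine block at $([n'+k-1],[n'])$, not the full product $B_{s+k-2}\cdots B_s$). These recursive relations generate the same ideal as your nonrecursive $z_{pq}-(B_j\cdots B_i)_{pq}$, so reducing modulo shorter relations adds no content and only complicates the bookkeeping. The paper works directly with these recursive relations, which is why a single induction on $l+r$ suffices.

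Second, and more substantively, your augmentation step is imprecisely phrased: to enlarge a small minor to one of size $\rank w_Q(\mathbf{r})_{[x']\times[y']}+1$ while preserving its value up to sign, you must adjoin rows and columns of $Z_{[x']\times[y']}$ whose entries are \emph{constant} $1$'s and $0$'s — that is, positions governed by the $v_Q$-identity blocks of the opposite-cell coordinates, not ``$1$-entries of $w_Q(\mathbf{r})$,'' which are not constant entries of $Z$ at all. The rank $\rank w_Q(\mathbf{r})_{[x']\times[y']}$ only determines how many such constant positions you need; the positions themselves come from $v_Q$. (In the paper's case~(\ref{*}) this is the content of the assertion that the relevant coordinates and Lemma~\ref{X.Y.XY} minors ``can be viewed as'' minors of the prescribed size in $Z_{[n']\times[t(L_l)]}$; in case~(\ref{**}) the new zero blocks sit in an entirely zero southwest submatrix so no augmentation is needed.) Your final paragraph correctly flags the augmentation as the step needing justification, and the inductive structure you propose for it is the right one — but as written the choice of entries to adjoin would not yield a determinant identity.
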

\begin{proof}
This kernel is the defining ideal of the image of $\zeta_Q$, which is a closed subvariety in $\mathbb{A}^{v_Q}$. The defining ideal of the image is generated by two types of polynomials: the first are the coordinate functions on the places of constant zero entries in $F_Q$ which is not a constant zero entry in $Z$; the second are determined by the rule of matrix multiplications (see Example \ref{ker-example} below).

Now in matrix \ref{*} and \ref{**} we consider the special area bounded by the four horizon and vertical lines (i.e. the blocks with row/column label greater than $n'$).

In \ref{*}, we can see that if one zero block has neither a constant identity south nor west, then every coordinate corresponding to this block can be viewed as a minor of size $\rank w_Q(\mathbf{r})_{[n']\times[t(L_l)]}+1$ in $Z_{[n']\times[t(L_l)]}$. And by Lemma \ref{X.Y.XY}, the generators given by matrix multiplications which happen in the special area lie in the ideal $I_{w_Q(\mathbf{r})}$ as well.

In \ref{**}, we can see that if one zero block has neither a constant identity south nor west, then itself is contained in a zero southwest submatrix and can be viewed as a $1\times 1$ minors in $I_{w_Q(\mathbf{r})}$. And in this special area there is no matrix multiplication happens.

Then this lemma can be proved by induction.
\end{proof}\begin{examp}\label{ker-example}Consider the quiver
  $$\begin{tikzcd}
  Q:1 \arrow[r, "\beta_1"'] & 2 \arrow[r, "\beta_2"'] & 3 & 4 \arrow[l,"\alpha_1"]&5 \arrow[l,"\alpha_2"]\end{tikzcd},$$with dimension vector $\mathbf{d}=(d_1,d_2,d_3,d_4,d_5)=(2,2,2,2,2)$. The affine space $\mathbb{A}^{v_Q}$ is the space of matrices of the form $$\left[
  \begin{matrix}
  &  &  &  & 1 &  &  &  &  &  \\
  &  &  &  &  & 1 &  &  &  &  \\
  &  &  &  & x_{3,5} & x_{3,6} & 1 &  &  &  \\
  &  &  &  & x_{4,5} & x_{4,6} &  & 1 &  &  \\
  1 &  &  &  &  &  &  &  &  &  \\
  & 1 &  &  &  &  &  &  &  &  \\
  x_{7,1} & x_{7,2} & 1 &  &  &  &  &  &  &  \\
  x_{8,1} & x_{8,2} &  & 1 &  &  &  &  &  &  \\
  x_{9,1} & x_{9,2} & x_{9,3} &  x_{9,4} & x_{9,5} & x_{9,6} & x_{9,7} & x_{9,8} & 1 &  \\
  x_{10,1} & x_{10,2} & x_{10,3} & x_{10,4} & x_{10,5} & x_{10,6} & x_{10,7} & x_{10,8} &  & 1
                                                                                                                         \end{matrix}\right],$$where $x_{i,j}$ are the affine coordinates and blank positions are zero.
  And given any representation$$\begin{tikzcd}
  V=V_1 \arrow[r, "B_1","\beta_1"'] & V_2 \arrow[r, "B_2","\beta_2"'] & V_3 & V_4 \arrow[l,"\alpha_1","A_1"']&V_5 \arrow[l,"\alpha_2","A_2"']\end{tikzcd},$$the image of this representation under $\zeta_Q$ is (every block is of size $2\times 2$)
  $$\zeta_Q(V)=\left[\begin{matrix}
                          0 & 0 & I & 0 & 0 \\
                          0 & 0 & B_1 & I & 0 \\
                          I & 0 & 0 & 0 & 0 \\
                          A_2 & I & 0 & 0 & 0 \\
                          0 &  A_1 & B_2B_1& B_2 & I
                        \end{matrix}\right].$$
  The image variety of $\zeta_Q$, i.e. $\zeta_Q(rep_Q(\mathbf{d}))$, is defined in $\mathbb{A}^{v_Q}$ by the ideal generated by
  $$x_{9,1},x_{9,2},x_{10,1},x_{10,2}$$
  $$x_{9,5}-x_{9,7}x_{3,5}-x_{9,8}x_{4,5},x_{9,6}-x_{9,7}x_{3,6}-x_{9,8}x_{4,6}$$
  $$x_{10,5}-x_{10,7}x_{3,5}-x_{10,8}x_{4,5},x_{10,6}-x_{10,7}x_{3,6}-x_{10,8}x_{4,6}.$$
\end{examp}
Now we are ready to present the main theorem.
\begin{theo}\label{MAIN}
  $(\zeta_Q^*)^{-1}(I_\mathbf{r})=I_{w_Q(\mathbf{r})}$. Therefore, $\zeta_Q: \overline{\mathcal{O}_\mathbf{r}}\to Y^{v_Q}_{w_Q(\mathbf{r})}$ is a scheme-theoretical isomorphism.
\end{theo}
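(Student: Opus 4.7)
The plan is to package the three preceding lemmas into a proof of the ideal equality $(\zeta_Q^*)^{-1}(I_\mathbf{r})=I_{w_Q(\mathbf{r})}$, and then conclude the scheme-theoretical isomorphism by a standard commutative-algebra argument, exploiting the fact already recorded in the paper that $\zeta_Q^*$ is surjective.

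The inclusion $I_{w_Q(\mathbf{r})}\subset (\zeta_Q^*)^{-1}(I_\mathbf{r})$ is just a rewriting of Lemma \ref{1}, so no further work is required. For the reverse inclusion I would take any $f\in (\zeta_Q^*)^{-1}(I_\mathbf{r})$, so that $\zeta_Q^*(f)\in I_\mathbf{r}$, and expand $\zeta_Q^*(f)=\sum_i c_i m_i$ as a $k[rep_Q(\mathbf{d})]$-linear combination of the prescribed minor generators $m_i$ of $I_\mathbf{r}$. By Lemma \ref{2} each $m_i$ equals $\zeta_Q^*(g_i)$ for some $g_i\in I_{w_Q(\mathbf{r})}$, and by the surjectivity of $\zeta_Q^*$ each coefficient lifts as $c_i=\zeta_Q^*(\tilde c_i)$. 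Setting $g:=\sum_i \tilde c_i\, g_i\in I_{w_Q(\mathbf{r})}$ one obtains $\zeta_Q^*(f-g)=0$, i.e.\ $f-g\in\ker\zeta_Q^*$, and Lemma \ref{3} places this residual back in $I_{w_Q(\mathbf{r})}$. Hence $f=g+(f-g)\in I_{w_Q(\mathbf{r})}$, proving the equality of ideals.

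To pass from this ideal identity to the geometric statement I would invoke the standard fact that a surjective ring homomorphism $\phi:A\twoheadrightarrow B$ descends to an isomorphism $A/\phi^{-1}(J)\xrightarrow{\sim} B/J$ for any ideal $J\subset B$. Applying this to $\phi=\zeta_Q^*$ and $J=I_\mathbf{r}$ yields $k[\mathbb{A}_Q]/I_{w_Q(\mathbf{r})}\xrightarrow{\sim} k[rep_Q(\mathbf{d})]/I_\mathbf{r}$, which is precisely the promised scheme-theoretical isomorphism $\zeta_Q:\overline{\mathcal{O}_\mathbf{r}}\xrightarrow{\sim} Y^{v_Q}_{w_Q(\mathbf{r})}$ at the level of coordinate rings.

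The only real obstacle at this stage is bookkeeping: all the geometric content has already been absorbed into the supporting material. The hard work was invested in Theorem \ref{main} (which drives Lemma \ref{2} by translating the rank-parameter minors into southwest-submatrix minors up to an operation \textcircled{1}+\textcircled{2}) and in the case analysis inside Lemma \ref{3} of the two flavours of relations cutting out $\mathrm{im}(\zeta_Q)$ in $\mathbb{A}_Q$, namely the ``forced-zero'' coordinates and the matrix-multiplication compatibilities. Once those are in hand, the proof of Theorem \ref{MAIN} is a purely formal assembly with no additional combinatorial or algebraic input.
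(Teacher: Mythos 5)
Your proof is correct and follows the paper's approach exactly: it assembles Lemmas \ref{1}, \ref{2}, and \ref{3} (together with the already-noted surjectivity of $\zeta_Q^*$) into the ideal equality and then reads off the scheme isomorphism via the standard quotient identification. You simply spell out the bookkeeping that the paper's one-line proof compresses.
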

\begin{proof}
  This is a consequence of Lemma \ref{1}, Lemma \ref{2} and Lemma \ref{3}, because Lemma \ref{2} and Lemma \ref{3} provide $$\zeta^*_Q(I_{w_Q(\mathbf{r})})\supset I_\mathbf{r}.$$
\end{proof}

For instance, in Example 4.3, we have the quiver locus $\overline{\mathcal{O}_{\mathbf{r}}}=\overline{\mathcal{O}_V}$ is isomorphic to $Y^{v_Q}_{w_Q(\mathbf{r})}$, where $$\mathbf{d}=(2,2,2,2,1,1,1),$$ $$v_Q=(6,5,8,9,1,2,3,4,10,11,7),w_Q(\mathbf{r})=(7,8,6,10,9,11,1,2,3,4,5).$$

Moreover, it is important to note that applying our construction to equioriented or bipartite type A quivers yields the Zelevinsky map designed by A.V. Zelevinsky in \cite{zelevinskii_two_1985} and the map generalized by Kinser and Rajchgot in \cite{kinser_type_2015}, respectively.

This Zelevinsky isomorphism is also $\mathrm{GL}(\mathbf{d})$-equivariant, if we view $G/P$ as a $\mathrm{GL}(\mathbf{d})$-space via the block diagonal embedding $$\begin{array}{ccc}
\mathrm{GL}(\mathbf{d})=\prod_{x\in Q_0}\mathrm{GL}_{d_x}&\to &G=\mathrm{GL}_N\\
(g_x)_{x\in Q_0}&\mapsto& diag(g_x)_{x\in S'_Q}.\end{array}$$Note that the order of blocks in $diag(g_x)_{x\in S'_Q}$ follows the order of vertices in $S'_Q$. Then $\mathrm{GL}(\mathbf{d})$ acts on $G/P$ by multiplication. Thus, every $\mathrm{GL}(\mathbf{d})$-orbit in $rep_Q(\mathbf{d})$ is mapped to a $\mathrm{GL}(\mathbf{d})$-orbit in $G/P$.
\begin{prop}\label{orbitimage}
  $\zeta_Q(\mathcal{O}_\mathbf{r})=(\cup X_w^\circ)\cap X^{v_Q}_\circ=\cup Y_w^{v_Q}$, where the union is taken over all the $w\in W^P$ satisfying the (ZP1) for $\mathbf{r}$ in Definition \ref{wQ} (but not necessarily (ZP2)).
\end{prop}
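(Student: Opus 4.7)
The plan is to leverage Theorem~\ref{MAIN}, which already gives the scheme-theoretical isomorphism $\zeta_Q(\overline{\mathcal{O}_\mathbf{r}})=Y^{v_Q}_{w_Q(\mathbf{r})}$, and to refine it via the Schubert cell decomposition of this Kazhdan-Lusztig variety. The essential point is that under $\zeta_Q$ the orbit stratification of $\overline{\mathcal{O}_\mathbf{r}}$ corresponds to the stratification of $Y^{v_Q}_{w_Q(\mathbf{r})}$ by the cell pieces $X^\circ_w\cap X^{v_Q}_\circ$ for $w\leq w_Q(\mathbf{r})$, and condition (ZP1) is exactly the criterion that selects the pieces covering the open orbit $\mathcal{O}_\mathbf{r}$.

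For the inclusion $\zeta_Q(\mathcal{O}_\mathbf{r})\subset(\cup X^\circ_w)\cap X^{v_Q}_\circ$, I would take any $V\in\mathcal{O}_\mathbf{r}$. Since Schubert cells partition $G/P$, the image $\zeta_Q(V)$ lies in a unique cell $X^\circ_{\tau(V)}$; the defining rank equalities $\rank\zeta_Q(V)_{i\times[y]}=\rank\tau(V)_{i\times[y]}$, specialized to the bottom row index of each block row $[x]$, yield the block-level equalities $\rank\zeta_Q(V)_{[x]\times[y]}=\rank\tau(V)_{[x]\times[y]}$. These block-ranks of $\zeta_Q(V)$ are invariants of $\mathcal{O}_\mathbf{r}$ (constant in $V$ by Theorem~\ref{main} together with Lemma~\ref{conrank}) and agree with those of $w_Q(\mathbf{r})$ by Definition~\ref{wQ}, so $\tau(V)$ satisfies (ZP1). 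Conversely, any $w$ satisfying (ZP1) has the same block-rank profile as $w_Q(\mathbf{r})$, and by the Z-type maximality statement (characterization~(2) of Z-type at the end of Section~\ref{KL}) this forces $w\leq w_Q(\mathbf{r})$ in the Bruhat order. Hence $X^\circ_w\cap X^{v_Q}_\circ\subset Y^{v_Q}_{w_Q(\mathbf{r})}=\zeta_Q(\overline{\mathcal{O}_\mathbf{r}})$; writing a point as $M=\zeta_Q(V)$, the cell rank conditions together with Theorem~\ref{main} applied at the positions $[\mu_Q(b)]\times[\lambda_Q(a)]$ force $\rank M^Q_{[a,b]}(V)=r_{[a,b]}$ for every interval $[a,b]$, so $V\in\mathcal{O}_\mathbf{r}$.

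The remaining equality $(\cup X^\circ_w)\cap X^{v_Q}_\circ=\cup Y^{v_Q}_w$ is then read off by distributing the intersection over the union and matching each open cell piece with its Kazhdan-Lusztig counterpart inside $X^{v_Q}_\circ$ via the same (ZP1) block-rank conditions that define both. I expect the main obstacle to be the Z-type maximality input --- namely, that Z-type permutations are precisely the Bruhat-maxima among permutations sharing a given block-rank profile --- which is the content of \cite[Lemma~3.10]{ESS} invoked at the end of Section~\ref{KL}; once this is in hand, the rest is a careful translation between the southwest-rank conditions defining Schubert cells and the rank identity of Theorem~\ref{main}.
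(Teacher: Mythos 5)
Your approach matches the paper's in spirit: the paper's (terse) proof invokes the rank characterization of Schubert cells on $\mathbb{A}_Q$ and leaves the rest implicit, and your argument is essentially a fleshed-out version of that route, using Theorem~\ref{main} to translate the southwest-rank conditions on $\zeta_Q(V)$ back to the rank parameters $r_{[a,b]}$. The forward direction (any $V\in\mathcal{O}_\mathbf{r}$ lands in some $X^\circ_{\tau}$ with $\tau$ satisfying (ZP1)) and the reduction of the backward direction to $\rank M^Q_{[a,b]}(V)=r_{[a,b]}$ are both correctly reasoned.

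However, there is a genuine gap in your treatment of the final equality $(\cup X_w^\circ)\cap X^{v_Q}_\circ=\cup Y_w^{v_Q}$, and it is not one you can ``read off by distributing the intersection over the union.'' Recall $Y_w^{v_Q}:=X_w\cap X^{v_Q}_\circ$ with $X_w$ the \emph{closed} Schubert variety, whereas the middle term uses the open cell $X^\circ_w$. These are not the same: $Y^{v_Q}_w\supsetneq X^\circ_w\cap X^{v_Q}_\circ$ whenever the lower cells $X^\circ_\tau$ ($\tau<w$, $\tau\geq v_Q$) meet $X^{v_Q}_\circ$. A minimal example is the equioriented $A_2$ quiver with $\mathbf{d}=(1,1)$ and $\mathbf{r}$ given by $r_{[1,2]}=1$: there $v_Q=e$, $w_Q(\mathbf{r})=s_1$ is the unique (ZP1)-satisfying permutation, $\zeta_Q(\mathcal{O}_\mathbf{r})=(\cup X_w^\circ)\cap X^{v_Q}_\circ$ is the punctured affine line $\{b\neq 0\}$, but $Y^{v_Q}_{s_1}=X_{s_1}\cap X^e_\circ$ is the whole affine line including $b=0$. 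So the second equality fails as stated. This defect is actually present in the statement of the Proposition itself (and the paper's one-line proof does not address it either), but your proposal should have flagged the Schubert cell vs.\ Schubert variety discrepancy rather than asserting a ``matching'' between the two; either the union should be over the locally closed pieces $X^\circ_w\cap X^{v_Q}_\circ$ (in which case the equality is a tautology), or the boundary strata $\tau<w_Q(\mathbf{r})$ need to be explicitly excluded.

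A smaller imprecision: you invoke Z-type characterization~(2), which concerns maximality inside $W_{P'}w\cap W^P$, but the set of permutations satisfying (ZP1) (same block-rank profile) is naturally $W_{P'}w_Q(\mathbf{r})W_P\cap W^P$, i.e.\ the $W^P$-part of the full double coset, which can be strictly larger than $W_{P'}w_Q(\mathbf{r})\cap W^P$. The correct input is the double-coset maximality from characterization~(1) via \cite[Lemma~3.10]{ESS}; your conclusion $w\leq w_Q(\mathbf{r})$ is true, but the citation should be to the double-coset statement.
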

\begin{proof}
  This can be captured by computing the $\mathrm{GL}(\mathbf{d})$-orbit in $G/P$ directly or noting that (recall the end of Section 3 for the notations)$$\begin{array}{l}X_w^\circ\cap X^{v_Q}_\circ=X_w^\circ\cap \mathbb{A}^{v_Q}\\=\{A\in \mathbb{A}^{v_Q}|\rank A_{i\times [y]}=\rank w_Q(\mathbf{r})_{i\times [y]},i\in \{1,\cdots,N\},y\in Q_0\}\end{array}.$$
\end{proof}
\subsection{Some corollaries}
After constructing of our isomorphism, an immediate corollary is that we can recover the results of Bobinski and Zwara in \cite{bobinski_normality_2001}, as well as those of Kinser and Rajchgot in \cite{kinser_type_2015}. These results show that type A quiver loci are normal, Cohen-Macaulay, and have rational singularities over fields of characteristic 0. Bobinski and Zwara achieved this result using the technique of Hom-controlled functors and proved that the types of singularities of type A (and type D) quiver loci are independent of the orientation of the quiver. Kinser and Rajchgot identified type A quiver locus of arbitrary orientation, up to a smooth factor, with an open dense subvariety of a Kazhdan-Lusztig variety. This result can now be reinterpreted most directly because it is known that every Kazhdan-Lusztig variety is normal, Cohen-Macaulay, and has rational singularities, as proven in \cite{Brion2005} and \cite{kazhdan_representations_1979}.
\begin{coro}
  The quiver loci are normal, Cohen-Macaulay for the type A quiver of arbitrary orientation. Moreover, if the ground field has characteristic 0, the type A quiver loci has rational singularities.
\end{coro}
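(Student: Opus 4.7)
The plan is to reduce the statement to known geometric properties of Schubert varieties by invoking the isomorphism established in Theorem \ref{MAIN}. Since Theorem \ref{MAIN} gives a scheme-theoretic isomorphism
\[
\zeta_Q: \overline{\mathcal{O}_{\mathbf{r}}} \;\xrightarrow{\;\sim\;}\; Y^{v_Q}_{w_Q(\mathbf{r})},
\]
and the three properties in question (normality, Cohen--Macaulayness, and rational singularities) are preserved under isomorphism of schemes, it suffices to verify them for the Kazhdan--Lusztig variety $Y^{v_Q}_{w_Q(\mathbf{r})}$ on the target side.

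First, I would observe that $Y^{v_Q}_{w_Q(\mathbf{r})} = X_{w_Q(\mathbf{r})} \cap X^{v_Q}_\circ$ is by definition an intersection of the Schubert variety $X_{w_Q(\mathbf{r})} \subset G/P$ with the opposite Schubert cell $X^{v_Q}_\circ$, which is an affine open subset of $G/P$. Consequently, $Y^{v_Q}_{w_Q(\mathbf{r})}$ is realized as a Zariski open subvariety of $X_{w_Q(\mathbf{r})}$, and all three properties we want are local in nature, hence descend to open subvarieties.

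Second, I would cite the classical results on the singularities of Schubert varieties in partial flag varieties: Schubert varieties in $G/P$ are normal and Cohen--Macaulay (due to Ramanathan and Brion, cf.\ \cite{Brion2005}), and over a field of characteristic zero they have rational singularities (cf.\ \cite{Brion2005},\cite{kazhdan_representations_1979}). Combining this with the open-immersion remark above shows that $Y^{v_Q}_{w_Q(\mathbf{r})}$ is normal, Cohen--Macaulay, and has rational singularities in characteristic zero.

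Finally, pulling back along the isomorphism $\zeta_Q$ transports these properties to $\overline{\mathcal{O}_{\mathbf{r}}}$, completing the proof. The only delicate point, which is more bookkeeping than a genuine obstacle, is to make sure the cited statements are formulated for Schubert varieties in \emph{partial} flag varieties $G/P$ rather than only for the full flag variety $G/B$; but in our case $w_Q(\mathbf{r}) \in W^P$ is the minimal-length coset representative, so the pullback of the Schubert variety from $G/P$ to $G/B$ is itself a Schubert variety in $G/B$, and the desired properties descend along the smooth projection $G/B \to G/P$. No new computation is required beyond invoking Theorem \ref{MAIN}.
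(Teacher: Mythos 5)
Your proposal is correct and follows essentially the same route as the paper: invoke the scheme-theoretic isomorphism $\zeta_Q:\overline{\mathcal{O}_{\mathbf{r}}}\xrightarrow{\sim} Y^{v_Q}_{w_Q(\mathbf{r})}$ from Theorem~\ref{MAIN}, then transfer the known normality, Cohen--Macaulayness, and rational singularities of Kazhdan--Lusztig (equivalently, open pieces of Schubert) varieties back to the quiver locus. The only difference is cosmetic --- you unpack the open-immersion and $G/B\to G/P$ descent details that the paper leaves implicit by citing \cite{Brion2005} and \cite{kazhdan_representations_1979} directly for Kazhdan--Lusztig varieties.
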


Additionally, our construction for arbitrary orientation has the potential to enhance existing proofs for important problems. For example, it provides an opportunity to obtain the vanishing of intersection cohomology in odd degrees for type A quiver loci in a clearer manner. This is a crucial step in proving the existence of canonical bases from Lusztig. Specifically, let $X$ be a type A quiver loci defined over $\overline{\mathbb{F}_{p^r}}$, and $$\pi_{\mathbb{Q}_l}=\tau_{\leq n-1}R_{j_n^*}\tau_{\leq n-2}\cdots \tau_{\leq 0}R_{j_1^*}\mathbb{Q}_l$$be the sheaf from Deligne on $X$ (see \cite[(3.2), (4.2)]{SchubertPoincare} for the definitions and notations, where $p$ is prime and $l\nmid p$), then denote $\mathcal{H}^i (X)$ to be the cohomology sheaf of $\pi_{\mathbb{Q}_l}$(in degree $i$). And let $F$ be the Frobenius morphism defined on $X$. By the Theorem \cite[4.2]{SchubertPoincare} and our isomorphism, there is the following corollary.
\begin{coro}
  The sheaf $\mathcal{H}^i(X)$ is zero for odd $i$. If $i$ is even and the point $x_0\in X$ are defined over $\mathbb{F}_{p^r}$ then all eigenvalues of $(F^r)^*$ on $\mathcal{H}^i_{x_0}(X)$ are equal to $(p^r)^{\frac{i}{2}}$.
\end{coro}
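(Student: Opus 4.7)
The plan is to deduce this corollary directly from the main isomorphism Theorem \ref{MAIN} together with the cited result \cite[Thm.~4.2]{SchubertPoincare} for Schubert varieties. Since scheme-theoretical isomorphisms preserve all local geometric and \'etale-cohomological invariants (in particular the Deligne sheaf $\pi_{\mathbb{Q}_l}$, which is defined intrinsically from the variety via iterated truncations and pushforwards along the resolutions $j_k$), transporting the statement across $\zeta_Q$ reduces the problem to the corresponding assertion on the Kazhdan-Lusztig variety $Y^{v_Q}_{w_Q(\mathbf{r})}$.

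First, I would invoke Theorem \ref{MAIN} to identify $X=\overline{\mathcal{O}_\mathbf{r}}$ with $Y^{v_Q}_{w_Q(\mathbf{r})}=X_{w_Q(\mathbf{r})}\cap X^{v_Q}_\circ$ as schemes over the ground field. Since $X^{v_Q}_\circ$ is an open subvariety of $G/P$, the Kazhdan-Lusztig variety is an open subvariety of the Schubert variety $X_{w_Q(\mathbf{r})}$. Because the sheaf $\pi_{\mathbb{Q}_l}$ and its cohomology sheaves $\mathcal{H}^i$ are defined by local constructions (truncations and direct images commute with restriction to an open subset), the stalks $\mathcal{H}^i_{x_0}(Y^{v_Q}_{w_Q(\mathbf{r})})$ at any point $x_0$ coincide with the stalks $\mathcal{H}^i_{x_0}(X_{w_Q(\mathbf{r})})$ of the Deligne sheaf on the ambient Schubert variety, equivariantly with respect to the Frobenius $F$.

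Next, I would apply \cite[Thm.~4.2]{SchubertPoincare} to the Schubert variety $X_{w_Q(\mathbf{r})}$: that theorem asserts exactly the two conclusions we want, namely $\mathcal{H}^i=0$ for odd $i$, and the eigenvalues of $(F^r)^*$ on $\mathcal{H}^i_{x_0}$ being $(p^r)^{i/2}$ for even $i$, provided $x_0$ is defined over $\mathbb{F}_{p^r}$. Pulling back along the Zelevinsky isomorphism $\zeta_Q$ then yields the conclusion for $X$ at any point $\zeta_Q^{-1}(x_0)$, and conversely any $\mathbb{F}_{p^r}$-point of $X$ maps to an $\mathbb{F}_{p^r}$-point of $Y^{v_Q}_{w_Q(\mathbf{r})}\subset X_{w_Q(\mathbf{r})}$ since $\zeta_Q$ is defined over the prime field.

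The only delicate step is checking that the Frobenius action is transported faithfully by $\zeta_Q$: the construction of $\zeta_Q$ in Section~4.1 only involves placing matrix entries of $V$, identity blocks, and zero blocks, so $\zeta_Q$ is defined over the prime field $\mathbb{F}_p$ and hence commutes with every power of $F$. This ensures the eigenvalue statement transfers without modification. I do not expect any serious obstacle beyond this verification, since the heavy lifting --- both the odd-vanishing and the Frobenius eigenvalue computation --- is already done on the Schubert side in \cite{SchubertPoincare}, and our main theorem supplies the scheme-theoretical bridge.
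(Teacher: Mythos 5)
Your argument matches the paper's: both deduce the result by combining Theorem~\ref{MAIN} with \cite[Thm.~4.2]{SchubertPoincare}, transporting the statement from the Schubert variety side to the quiver locus via the Zelevinsky isomorphism. The paper gives only the one-line citation, so your expansion --- openness of the Kazhdan-Lusztig variety in the Schubert variety, locality of the Deligne construction under open restriction, and $\mathbb{F}_p$-rationality of $\zeta_Q$ ensuring Frobenius equivariance --- correctly fills in exactly the steps the paper leaves implicit.
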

This argument allows us to improve Lusztig's proof of existence of canonical bases (see \cite[Theorem 5.4]{lusztigcanbase}) regarding technique on special oriented quivers, and directly obtain the calculation results for the intersection cohomology of any oriented quiver.
\section{Zelevinsky permutation and indecomposable multiplicities}\label{multi}
The $\mathrm{GL}(\mathbf{d})$-orbit of a representation $V$ in $rep_Q(\mathbf{d})$ is also the isomorphism class of $V$ in the category of representations of $Q$. Besides the rank parameter, we can characterize this orbit by the indecomposable factorization $V=\oplus m_{pq}I_{pq}$ as well, where $I_{pq} (1\leq p\leq q\leq n)$ is the indecomposable representation of $Q$ given by
$$\begin{tikzcd}
\cdots \arrow[r, no head] & V_{p-1} \arrow[r, no head] \arrow[d, Rightarrow, no head]& V_p \arrow[r, "1", no head] \arrow[d, Rightarrow, no head]  & \cdots \arrow[r, "1", no head] & V_q \arrow[d, Rightarrow, no head] \arrow[r, no head] & V_{q+1} \arrow[r, no head] \arrow[d, Rightarrow, no head]& \cdots \\
                          &              0        & k^1                                                                                                                   &                                            & k^1                                                   &           0           &
\end{tikzcd}$$

In this section, given a representation $V\in rep_Q(\mathbf{d})$ and its rank parameter $\mathbf{r}$, we can establish the clear and useful connection between the Zelevinsky permutation $w(\mathbf{r})$ and the multiplicities $m_{ij}$ of indecomposable factors in $V=\oplus m_{pq}I_{pq}$.

\begin{defn}
  For a $(S'_Q,S_Q)$-blocked permutation $w\in S_N$, now we define a $n\times n$ matrix $M(w)$ in nonnegative integers, which is nearly a $(S'_Q,S_Q)$-blocked matrix: it satisfies (BM1) and (BM2) in Definition \ref{defBM} but every block row/column has only height/width one (so its row/columns are labeled and we can also use similar notation before). And the $([x],[y])$-block $M(w)_{[x],[y]}$ (only one entry) of $M(w)$ is $$M(w)_{[x],[y]}=\rank w_{[x]\times[y]}-\rank w_{[S(x)]\times[y]}-\rank w_{[x]\times[W(y)]}+\rank w_{[S(x)]\times[W(y)]},$$ where the undefined rank terms are $0$. We call $M(w)$ the\textbf{ multiplicity matrix }of $w$.
\end{defn}

This matrix $M(w)$ collects the numbers of $1$ entries in every block of $w$. We can see that given any $n\times n$ matrix $M$, there exists a permutation $w\in S_N$ such that $M=M(w)$ if and only if
    $$\sum_{y'}M_{[x],[y']}=d_x, \sum_{x'}M_{[x'],[y]}=d_y,\forall x,y\in Q_0.$$
Moreover, at this time there exists a unique $w$ of Z-type such that $M=M(w)$.

The relations between the rank parameter and the indecomposable multiplicity are revealed in \cite{degenerationofA}.

\begin{prop}[cf.\cite{degenerationofA}]\label{m-r}
  Given a rank parameter $\mathbf{r}$ and $V\in \mathcal{O}_\mathbf{r}$ with $V=\oplus m_{pq}I_{pq}$, the parameter $\mathbf{r}$ and multiplicities $m_{pq}$ have relations :
  \begin{itemize}
    \item[(1)]if $s_a<p<s_{a+1},s_b<q<s_{b+1},$ \item[]$m_{pq}=(-1)^{b-a}(r_{[p,q]}-r_{[p-1,q]}-r_{[p,p+1]}+r_{[p-1,q+1]});$
    \item[(2)]if $s_b<q<s_{b+1},$ \item[]$m_{s_aq}=(-1){b-a+1}(r_{[s_{a+1},q]}-r_{[s_{a-1},q]}-r_{[s_{a+1},q+1]}+r_{[s_a-1,q+1]});$
    \item[(3)]if $s_a<p<s_{a+1},$ \item[]$m_{ps_b}=(-1)^{b-a}(r_{[p,s_{b-1}]}-r_{[p-1,s_{b-1}]}-r_{[p,s_b+1]}+r_{[p-1,s_b+1]});$
    \item[(4)]if $s_a\neq s_b,$ \item[]$m_{s_a s_b}=(-1)^{b-a+1}(r_{[s_{a+1},s_{b-1}]}-r_{[s_a-1,s_{b-1}]}-r_{[s_{a+1},s_b+1]}+r_{[s_a-1,s_b+1]});$
    \item[(5)]in particular, \item[]$m_{s_a s_a}=r_{[s_a,s_a]}-r_{[s_a-1,s_a+1]}=d_{s_a}-r_{[s_a-1,s_a+1]},$
  \end{itemize}
where $r_{[x,x]}=d_x, r_{[y,x]}=0, \forall 1\leq x\leq n$ and $x<y.$
\end{prop}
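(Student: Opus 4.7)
The plan is to exploit the additivity of $\varphi^V_{[a,b]}$ under direct sums of representations, compute the rank on each indecomposable $I_{pq}$ directly, and then invert the resulting linear system by a Möbius-style inclusion-exclusion tailored to the source/sink pattern of $Q$.

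First I would observe that, because $\varphi^V_{[a,b]}$ is built blockwise from the paths in $Q_{[a,b]}$, for $V=\oplus_{p\le q} m_{pq}I_{pq}$ we have
\[
r_{[a,b]}(V)=\sum_{p\le q} m_{pq}\, r_{[a,b]}(I_{pq}).
\]
Then I would compute $r_{[a,b]}(I_{pq})$ explicitly by inspecting the restriction of $I_{pq}$ to $Q_{[a,b]}$. Since $I_{pq}$ is one-dimensional on the arc $[p,q]$ with identity maps, the map $\varphi^{I_{pq}}_{[a,b]}$ either has rank $0$ or rank $1$, and the rank equals $1$ precisely when $[p,q]\cap[a,b]$ contains a sink of $Q_{[a,b]}$ along with at least one adjacent source of $Q_{[a,b]}$ on which $I_{pq}$ is supported. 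A short case analysis based on whether $a$, $b$ are critical points or interior vertices then pins down for each fixed $[a,b]$ the set of pairs $(p,q)$ contributing $1$, expressed neatly in terms of the critical points $s_i$ bracketing $[a,b]$.

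Next I would set up the inversion. The key point is that the difference operator
\[
\Delta_{p,q}\,r := r_{[p,q]}-r_{[p-1,q]}-r_{[p,q+1]}+r_{[p-1,q+1]}
\]
measures the number of indecomposables whose support has its leftmost vertex exactly at $p$ and its rightmost vertex exactly at $q$; this is just the standard two-variable finite-difference/Möbius inversion on the poset of intervals. Applied to the formula for $r_{[a,b]}(V)$ obtained above, $\Delta_{p,q}$ isolates $m_{pq}$ up to an overall sign, which is $(-1)^{(\#\text{sinks})-(\#\text{sources})}$ of $Q_{[\min,\max]}$ for the relevant bracketing interval; because sources and sinks alternate in a type A quiver, this exponent is exactly $b-a$ (or $b-a+1$ when $p$ or $q$ is itself a critical point, which explains the shifted sign in cases (2) and (4)). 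The boundary cases (2)--(5), where $p$ or $q$ is a critical point $s_a$ or $s_b$, are handled by replacing the shift $p\mapsto p-1$ or $q\mapsto q+1$ with the jump $s_a\mapsto s_{a\pm 1}$, because the combinatorics of the support of $I_{pq}$ changes exactly at a critical point.

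The main obstacle I expect is not the algebra of inversion but the bookkeeping in the rank computation: verifying that $r_{[a,b]}(I_{pq})\in\{0,1\}$ with the correct characterization in every subcase (both endpoints interior; one or both critical; $[p,q]\subset[a,b]$ vs partial overlap) requires careful tracking of which sinks of $Q_{[a,b]}$ receive contributions from $I_{pq}$. Once this is tabulated, matching the four-term alternating sums in (1)--(5) against $\Delta_{p,q}$ and its variants at critical points is mechanical, and the sign in each formula is forced by the parity $b-a$ counting alternations of source/sink type between the relevant endpoints. Finally, (5) is just the special case where $p=q=s_a$, so $I_{s_as_a}$ is the simple at $s_a$ and its multiplicity is $\dim V_{s_a}$ minus the dimension of the image from the two adjacent sources, i.e.\ $d_{s_a}-r_{[s_a-1,s_a+1]}$.
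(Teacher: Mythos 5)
The paper does not prove Proposition~\ref{m-r}: it states it with the tag ``cf.~\cite{degenerationofA}'' and moves on, so there is no in-paper argument to compare against. I therefore assess your proposal on its own terms, and I believe it contains a genuine error at the step you yourself flag as ``not the algebra of inversion but the bookkeeping.''

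Your central technical claim --- that $\varphi^{I_{pq}}_{[a,b]}$ always has rank $0$ or $1$ --- is false once the quiver is not equioriented. Take $Q:1\to 2\leftarrow 3\to 4$ and $I_{14}$ with $[a,b]=[1,4]$. Then $Sou_{[1,4]}=\{1,3\}$, $Sin_{[1,4]}=\{2,4\}$, and since every map in $I_{14}$ is the identity, $\varphi^{I_{14}}_{[1,4]}$ has matrix
\[
\begin{pmatrix}1 & 1\\ 0 & 1\end{pmatrix}
\]
of rank $2$. (The same happens, e.g., for $I_{15}$ over $1\to 2\leftarrow 3\to 4\leftarrow 5$ on $[1,5]$, where the rank is again $2$.) In general, whenever $[p,q]\cap[a,b]$ contains several sinks of $Q_{[a,b]}$ together with the bridging sources, the rank grows. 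Your characterization ``rank $=1$ iff the overlap contains a sink with an adjacent source'' describes only when the rank is \emph{nonzero}, not when it is $1$.

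This breaks the subsequent inversion argument as written. The clean two-variable difference/M\"obius inversion you invoke relies on the product form $r_{[a,b]}(I_{pq})=[\,p\le a\,]\cdot[\,b\le q\,]$, which is special to the equioriented case (there $Q_{[a,b]}$ has a single source and a single sink). For arbitrary orientation the naive operator $\Delta_{p,q}$ does \emph{not} isolate $m_{pq}$; one can check on the $I_{15}$ example above that $\Delta_{2,4}r=r_{[2,4]}-r_{[1,4]}-r_{[2,5]}+r_{[1,5]}=1-2-2+2=-1$, not $0=m_{24}$. This is precisely why the four-term formulas in Proposition~\ref{m-r} use the critical-point shifts $s_{a\pm1}$, $s_a-1$, $s_b+1$ rather than the uniform shifts $p-1,q+1$: the jump to the neighboring critical point is what compensates for the fact that $r_{[a,b]}(I_{pq})$ is not a $\{0,1\}$-valued product. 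Your proposal correctly uses additivity of rank over direct sums as the starting point, but the explicit evaluation of $r_{[a,b]}(I_{pq})$ and the resulting inversion would need to be redone with the correct (integer-valued) rank formula for $\varphi^{I_{pq}}_{[a,b]}$, which counts the number of sink peaks of $Q_{[a,b]}$ whose full local staircase lies inside $[p,q]$; only then do the shifted differences of the proposition drop out.
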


The next theorem shows that the multiplicity matrix effectively records the indecomposable factors of a representation. This result is obvious in the equioriented case, and in the article \cite{fourformulae} it is used as the definition of Zelevinsky permutation for equioriented quiver. Surprisingly, it can also be applied to arbitrary orientations under our design. This strengthens the notion that our construction not only serves as a technical unification of previous Zelevinsky maps but also offers a more intrinsic approach.

Let $n_{xy}=\sum_{p\leq x<y\leq q}m_{pq}$.
\begin{theo}
  Given a rank parameter $\mathbf{r}$ and $V\in \mathcal{O}_\mathbf{r}$ with $V=\oplus m_{pq}I_{pq}$, the entries of the multiplicity matrix $M(w_Q(\mathbf{r}))$ satisfy:
  $$M(w_Q(\mathbf{r}))_{[x],[y]}=\left\{\begin{array}{ll}
                                      m_{yx} &,\mbox{ if }y\leq x  \\
                                      n_{xy}&,\mbox{ if }y=x+1 \\
                                      0 &,\mbox{ else}
                                    \end{array}\right.$$
\end{theo}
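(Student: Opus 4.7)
The plan is to compute $M(w_Q(\mathbf{r}))_{[x],[y]}$ directly from its definition as a four-term alternating sum of ranks of southwest submatrices of $w_Q(\mathbf{r})$, then translate each rank into rank parameters $r_{[a,b]}$ using Theorem \ref{main} and the constant-rank analysis of Lemma \ref{conrank}. Since $w_Q(\mathbf{r})$ and $v_Q$ have identical row and column labels $(S_1^Q, S_2^Q)$, by (ZP1) each rank $\rank w_Q(\mathbf{r})_{[x']\times[y']}$ either equals $r_{[a,b]} + \rank (v_Q)_{[x']\times[y']}$ when $([x'],[y']) = ([\mu_Q(b)],[\lambda_Q(a)])$ for some interval $[a,b]\subset Q$, or equals $\rank (v_Q)_{[x']\times[y']}$ otherwise. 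Substituting these into the alternating sum, the $v_Q$ contributions reassemble into $M(v_Q)_{[x],[y]}$, which is $d_x$ when $x=y$ and $0$ otherwise, while the remainder is a four-term alternating sum of rank parameters.

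The main step is then to identify, for a given block $([x],[y])$, the (up to) four intervals $[a,b]$ that appear among the four corners, by applying $\lambda_Q^{-1}$ to $[y]$ and $[W(y)]$ and $\mu_Q^{-1}$ to $[x]$ and $[S(x)]$. Because $\lambda_Q$ and $\mu_Q$ are defined piecewise depending on whether a vertex is a critical point and whether it is a source or sink in the corresponding subquiver, a careful case analysis is required, and it will align term-by-term with the five cases of Proposition \ref{m-r}. For $y<x$, all four corners correspond to genuine intervals, and after the identification the alternating sum matches exactly one of formulas (1)--(5) of Proposition \ref{m-r}, producing $m_{yx}$ (with the sign resolving because $y<x$ forces a specific parity of critical points between them). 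For $y=x+1$, I expect one corner to lie outside the $\lambda_Q/\mu_Q$-image and contribute via the constant $v_Q$ part, while the remaining three assemble into the combinatorial identity $\sum_{p\leq x<y\leq q} m_{pq} = n_{xy}$, which is a standard telescoping of the multiplicity formulas in Proposition \ref{m-r}. For $y>x+1$, the involved positions all lie in constant-rank regions (as characterized in Lemma \ref{conrank}), so the alternating sum collapses to zero.

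The main obstacle will be the bookkeeping in the edge cases: when $x$ or $y$ is an extreme element of $S_1^Q$ or $S_2^Q$ (so that $S(x)$ or $W(y)$ is undefined and the convention ``the undefined rank terms are $0$'' must be invoked), when $x$ or $y$ is a critical point (where $\lambda_Q, \mu_Q$ jump), and when the block $[x]\times[y]$ crosses the boundary between the ``left-path'' and ``right-path'' regions of $S_1^Q$ and $S_2^Q$. Each of these edge cases corresponds, after careful translation, to one of the distinguished sub-cases (2)--(5) in Proposition \ref{m-r}, so the proof should reduce to verifying a dictionary between $(\lambda_Q^{-1}(y), \lambda_Q^{-1}(W(y)), \mu_Q^{-1}(x), \mu_Q^{-1}(S(x)))$ and the four rank-parameter indices appearing there.

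Finally, once the formula for individual entries is established, the theorem itself is immediate, and Example \ref{sp-example} serves as a useful sanity check against the computed $w_Q(\mathbf{r})$ and the indecomposable decomposition of $V$ there.
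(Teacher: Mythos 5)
Your plan is essentially the paper's proof, reorganized slightly. Both arguments compute $M(w_Q(\mathbf{r}))_{[x],[y]}$ as a four-term alternating sum of southwest ranks, use Theorem \ref{main} together with the constant-rank dichotomy of Lemma \ref{conrank} to express each term as $\rank (v_Q)_{[\cdot]\times[\cdot]}$ plus a rank parameter (or as $\rank(v_Q)_{[\cdot]\times[\cdot]}$ alone when the position is outside the $(\mu_Q,\lambda_Q)$-image), and then match the residual against Proposition \ref{m-r}. Your observation that the $v_Q$ contributions reassemble into $M(v_Q)_{[x],[y]} = d_x\delta_{xy}$ is a genuinely clean simplification; the paper does the same thing but only implicitly, e.g.\ in its Case 2 it verifies by hand that three constant-rank terms sum to $d_x$.

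Two factual expectations in your plan are wrong and need fixing before the argument closes. First, for $y=x+1$ it is not one but \emph{three} of the four corners that lie outside the $(\mu_Q,\lambda_Q)$-image: the positions $[S(x)]\times[y]$, $[x]\times[W(y)]$, and $[S(x)]\times[W(y)]$ are all of constant rank (see the paper's Case 3), and only $[x]\times[y]$ realizes an interval, namely $[x,x+1]$. The alternating sum therefore collapses to $r_{[x,x+1]}$, and one finishes by the direct observation that $r_{[x,x+1]}=\rank V_\alpha=n_{xy}$ for the arrow $\alpha$ between $x$ and $x+1$; no ``telescoping of Proposition \ref{m-r}'' is required, and such a telescoping would in fact be the wrong tool here since that proposition already packages each $m_{pq}$ as an alternating combination of rank parameters. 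Second, you should treat the diagonal $y=x$ separately from $y<x$: your claim that ``all four corners correspond to genuine intervals'' fails there --- again three corners are of constant rank (paper's Case 2), their $v_Q$ parts contribute $d_x$, and the fourth contributes $-r_{[x-1,x+1]}$, matching formula (5) of Proposition \ref{m-r}, which is two-term rather than four-term. With those two corrections, the case analysis aligns with the paper's and your sign remark for $y<x$ is correctly resolved by the parity of critical points between $y$ and $x$, as the paper spells out for Proposition \ref{m-r}(1).
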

\begin{proof}
  We only prove for three cases, as the proofs for the other cases are similar.

\textbf{Case 1}: Neither of $y<x$ are critical points, and the directions of arrows are $$\begin{tikzcd}
\cdots & y \arrow[l] & \cdots \arrow[l] \arrow[r] & x \arrow[r] & \cdots
\end{tikzcd}$$
Note that $x$ lies in some $R_\cdot \backslash t(R_\cdot)$ and $y$ lies in some $L_\cdot\backslash t(L_\cdot)$. Whether $x+1$ and $y-1$ are critical points or not, their positions in the block rows and columns are
$$\begin{blockarray}{cccccc}\begin{block}{c[ccccc]}
   \vdots&  &  &  &&  \\
  x-1 &  &  &  & & \\
  x &  &  &\cdots  &  &\\
  (x+1)^R_Q &  &  &  &  &\\
  \vdots &   &   &   & & \\\end{block}
         &  \cdots  & y+1 & y & (y-1)^L_Q &\cdots
\end{blockarray}$$
Consider the source-sink behaviors of these points, we have$$\begin{array}{rcl}\mu_Q(x)=x&,&\mu_Q(x+1)=(x+1)^R_Q,\\\lambda_Q(y-1)=y&,&\lambda_Q(y)=y+1.\end{array}$$
Therefore,$$\begin{array}{l}
\rank w_Q(\mathbf{r})_{[x]\times[y]}=r_{[y-1,x]},\\
\rank w_Q(\mathbf{r})_{[(x+1)^R_Q]\times[y]}=r_{[y-1,x+1]},\\
\rank w_Q(\mathbf{r})_{[x]\times[y+1]}=r_{[y,x]}, \\
\rank w_Q(\mathbf{r})_{[(x+1)^R_Q]\times[y+1]}=r_{[y,x+1]},\end{array}$$and
    $$\begin{array}{l}
    M(w_Q(\mathbf{r}))_{[x],[y]}
    =r_{[y-1,x]}-r_{[y-1,x+1]}-r_{[y,x]}+r_{[y,x+1]}=m_{yx},
  \end{array}$$where the sign $(-1)^{a-b}$ in Proposition \ref{m-r}(1) is $-1$ at this time for $s_a<y<s_{a+1}$ and $s_b<x<s_{b+1}$.

\textbf{Case 2}: $x=y=s_a$ is a source point, and there is $$\begin{tikzcd}
\cdots & x-1 & x(=s_a) \arrow[l] \arrow[r]  & x+1 & \cdots &                                                \\
\end{tikzcd}$$
Then $x$ lies in some $R_\cdot\backslash t(R_\cdot)$ and some $L_\cdot\backslash t(L_\cdot)$. The positions of $x$ within the rows and columns are
$$\begin{blockarray}{cccccc}\begin{block}{c[ccccc]}
   \vdots&  &  &  &&  \\
  s_{a-1}-1 &  &  &  & & \\
  x &  &  & \cdots &  &\\
  (x+1)^R_Q &  &  &  &  &\\
  \vdots &   &   &   & & \\\end{block}
         &  \cdots  & s_{a+1}+1 & x & (x-1)^L_Q &\cdots
\end{blockarray}$$
Consider the source-sink behaviors of these points. We have $$\begin{array}{rcl}\mu_Q(s_{a-1})=x&,&\mu_Q(x+1)=(x+1)^R_Q,\\ \lambda_Q(s_{a+1})=s_{a+1}+1&,&\lambda_Q(x-1)=x.\end{array}$$
Because $\lambda_Q(x-1)=x$, $\mu_Q(s_{a-1})=x$ with $x-1\not< s_{a-1}$ and so on, the southwest matrices $w_Q(\mathbf{r})_{[x]\times[x]}$, $w_Q(\mathbf{r})_{[x]\times [s_{a+1}+1]}$ and $w_Q(\mathbf{r})_{[s_{a+1}+1]\times [(x+1)^R_Q]}$ have the same ranks as the corresponding southwest matrices of permutation $v_Q$, i.e., they are of constant rank (see Lemma \ref{conrank}). Therefore, $$\rank w_Q(\mathbf{r})_{[x]\times[x]}-\rank w_Q(\mathbf{r})_{[x]\times [s_{a+1}+1]}+\rank w_Q(\mathbf{r})_{[s_{a+1}+1]\times [x+1]}=d_x.$$We have $$\begin{array}{l}
    M(w_Q(\mathbf{r}))_{[x],[x]}=d_x-\rank w_Q(\mathbf{r})_{[(x+1)^R_Q]\times [x]}\\
    =d_x-r_{[x-1,x+1]}=m_{xx}.
  \end{array}$$

\textbf{Case 3}: $y=x+1$. At this time, if the arrow between $x$ and $y$ is of left direction $$\begin{tikzcd}
x & y \arrow[l, "A_i"']
\end{tikzcd}$$then by definition, the $([x],[y])$-block is exactly $A_i$. $$\begin{blockarray}{ccccc}\begin{block}{c[cccc]}
     \vdots  &   &  &     & \\
  x    &   &0 & A_i & \\
  S(x) &   &0 & 0   &\\
    \vdots   &   &  &     & \\\end{block}
       & \cdots  &W(y)& y &\cdots
\end{blockarray}$$And in the induction (i.e. matrix \ref{*} and \ref{**}) it is easy to observe that the southwest matrices $w_Q(\mathbf{r})_{[S(x)]\times[y]}$, $w_Q(\mathbf{r})_{[x]\times[W(y)]}$, and $w_Q(\mathbf{r})_{[S(x)]\times[W(y)]}$ are always of constant rank. Moreover, by considering the matrices (\ref{*}) and (\ref{**}) or checking the source-sink behaviors directly, we have $$\rank w_Q(\mathbf{r})_{[x],[y]}=\rank A_i.$$ Therefore, $$M(w_Q(\mathbf{r}))_{[x]\times[y]}=\rank w_Q(\mathbf{r})_{[x]\times[y]}=\rank A_i=\sum_{p\leq x<y\leq q}m_{pq}=n_{xy}.$$

If the arrow between $x$ and $y$ is of right direction$$\begin{tikzcd}
x \arrow[r, "B_i"] & y
\end{tikzcd}$$then $x$ belongs to some $R_\cdot\backslash (t(R_\cdot))$ and $y$ belongs to some $L_\cdot\backslash s(L_\cdot)$. Consequently, there must be $S(x)=y^R_Q$ and $W(y)=x^L_Q$. $$\begin{blockarray}{ccccc}\begin{block}{c[cccc]}
   \vdots  &  &  &    & \\
  x  &  & \cdots & & \\
  y^R_Q &  &   & \cdots &\\
  \vdots     &   &  &  & \\\end{block}
        & \cdots  &  x^L_Q& y &\cdots
\end{blockarray}$$And by definition $$\mu_Q(y)=y^R_Q,\lambda_Q(x)=x^R_Q.$$ Similarly, the matrices $w_Q(\mathbf{r})_{[x]\times[y]}$, $w_Q(\mathbf{r})_{[S(x)]\times[y]}$, and $w_Q(\mathbf{r})_{[x]\times[W(y)]}$ are always of constant rank. At this time, $$M(w_Q(\mathbf{r}))_{[x],[y]}=\rank w_Q(\mathbf{r})_{[x]\times[y]}=\rank B_i=\sum_{p\leq x<y\leq q}m_{pq}=n_{xy}.$$

Here we only prove these three cases. Once all the cases of $y\leq x+1$ have been proven, for $y>x+1$ there is naturally $M(w(\mathbf{r}))_{[x],[y]}=0$ because the other entries sum up to exactly the height/width of the block row/column.
\end{proof}
\begin{examp}
For the quiver and representation in Example \ref{Zex} before, the multiplicity matrix is given by  $$\begin{blockarray}{lccccccc}
 \begin{block}{l[ccccccc]}
                 1 &  0 & 0 & 0 & m_{11} & n_{12} & 0 & 0 \\
                2  &  0 & 0 & 0 & m_{12} & m_{22} & n_{23} & 0 \\
                 5 &  0 & m_{55} & m_{45} & m_{15} & m_{25} & m_{35} & n_{56} \\
                 7 &  m_{77} & m_{57} & m_{47} & m_{17} & m_{27} & m_{37} & m_{67} \\
                 6 &  n_{67} & m_{56} & m_{46} & m_{16} & m_{26} & m_{36} & m_{66} \\
                 4 &  0 & n_{45} & m_{44} & m_{14} & m_{24} & m_{34} & 0 \\
                  3 & 0 & 0 & n_{34} & m_{13} & m_{23} & m_{33} & 0\\
                  \end{block}
                     &7 &  5 &  4 &  1 &  2 &  3 &  6\end{blockarray}.$$If we take $V$ to be the specific representation as we have considered in Example \ref{sp-example},
                 then the indecomposable factorization of $V$ is $$V\cong I_{13}\oplus I_{14}\oplus I_{47}.$$ And its corresponding multiplicity matrix, which collects the numbers of $1$ entries in every block of $w_Q(\mathbf{r})$, is exactly shown as in Example \ref{sp-example}:  $$\begin{blockarray}{lccccccc}
 \begin{block}{l[ccccccc]}
                 1 &    &   &   &   & 2 &   &   \\
                2  &    &   &   &   &   & 2 &   \\
                 5 &    &   &   &   &   &   & 1 \\
                 7 &    &   & 1 &   &   &   &   \\
                 6 &  1 &   &   &   &   &   &   \\
                 4 &    & 1 &   & 1 &   &   &   \\
                  3 &   &   & 1 & 1 &   &   &  \\
                  \end{block}
                     &7 &  5 &  4 &  1 &  2 &  3 &  6\end{blockarray}.$$
\end{examp}
\begin{coro}\label{coroMwr}
  Given a $n\times n$ matrix $M$ in nonnegative integers, there exists a rank parameter $\mathbf{r}$ such that $M=M(w_Q(\mathbf{r}))$ if and only if $$\sum_{y'}M_{[x],[y']}=d_x, \sum_{x'}M_{[x'],[y]}=d_y,\forall x,y\in Q_0,$$and $$M_{[x],[y]}=0,\forall y>x+1.$$
\end{coro}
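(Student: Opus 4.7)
The plan is to handle the two directions separately. Necessity is immediate: since $w_Q(\mathbf{r})$ is a permutation in $S_N$, the total number of nonzero entries in block row $[x]$ equals the height $d_x$ of that row, and analogously for block columns, which gives both row- and column-sum identities. The vanishing of $M_{[x],[y]}$ for $y>x+1$ is exactly the third case of the theorem that precedes this corollary.

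For sufficiency, suppose $M$ satisfies the three listed conditions. I would reverse-engineer a representation whose rank parameter realizes $M$. Set $m_{pq}:=M_{[q],[p]}$ for $1\le p\le q\le n$, and form
\[
V \;:=\; \bigoplus_{p\le q} m_{pq}\, I_{pq}.
\]
The crucial step is verifying $\dim V_x=d_x$ for every $x\in Q_0$. To see this, sum the column-sum identity $\sum_{x'} M_{[x'],[y]}=d_y$ over $y\le x$:
\[
\sum_{y\le x} d_y \;=\; \sum_{y\le x,\;x'\ge x} M_{[x'],[y]} \;+\; \sum_{y\le x,\;x'<x} M_{[x'],[y]}.
\]
The first double sum is exactly $\sum_{p\le x\le q} m_{pq}=\dim V_x$. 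In the second, fix $x'<x$, so that $x'+1\le x$: the support condition forces $M_{[x'],[y']}=0$ whenever $y'>x'+1$, hence $\sum_{y\le x} M_{[x'],[y]}$ coincides with the full row sum $d_{x'}$. Summing over $x'<x$ yields $\sum_{x'<x} d_{x'}$, which cancels the corresponding piece of $\sum_{y\le x} d_y$, leaving $\dim V_x=d_x$.

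Now let $\mathbf{r}$ be the rank parameter of the orbit of $V$. The preceding theorem gives $M(w_Q(\mathbf{r}))_{[x],[y]}=m_{yx}=M_{[x],[y]}$ for $y\le x$, and $0=M_{[x],[y]}$ for $y>x+1$. It remains to handle the boundary $y=x+1$, i.e.\ to show $M_{[x],[x+1]}=n_{x,x+1}$. For this, compare the row sum at row $x$ with $\dim V_x=d_x$: on one hand the support condition gives $d_x=\sum_{p\le x} m_{p,x}+M_{[x],[x+1]}$, and on the other $d_x=\dim V_x=\sum_{p\le x} m_{p,x}+\sum_{p\le x,\;q\ge x+1} m_{pq}$. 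Subtracting yields $M_{[x],[x+1]}=\sum_{p\le x,\;q\ge x+1} m_{pq}=n_{x,x+1}$, completing the identification $M=M(w_Q(\mathbf{r}))$.

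The main obstacle is the dimension-vector check in the second paragraph: it is a double count that genuinely needs both the row-sum identities \emph{and} the column-sum identities, interacting through the support condition $M_{[x],[y]}=0$ for $y>x+1$. Once that step is secured, everything else is bookkeeping that drops out of the preceding theorem.
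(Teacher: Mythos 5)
Your proof is correct, and since the paper states this corollary without an explicit proof (treating it as an immediate consequence of the preceding theorem), your argument supplies exactly the bookkeeping the authors leave implicit. Necessity is exactly as you say: the row/column sums follow from $w_Q(\mathbf{r})$ being a permutation matrix, and the support condition $M_{[x],[y]}=0$ for $y>x+1$ is the third case of the theorem. For sufficiency, the natural move is to read off candidate multiplicities $m_{pq}=M_{[q],[p]}$ and form $V=\bigoplus m_{pq}I_{pq}$, and the only genuine obstruction is whether this $V$ lands in $rep_Q(\mathbf{d})$; your double-count of $\sum_{y\le x}d_y$, splitting the column-sum identities at $x'\ge x$ versus $x'<x$ and invoking the support condition to collapse the $x'<x$ piece to full row sums, is a clean way to settle it. The final identification of $M_{[x],[x+1]}$ with $n_{x,x+1}$ by subtracting the row-sum identity from the dimension identity is also correct, and it matches the definition $n_{x,x+1}=\sum_{p\le x,\;q\ge x+1}m_{pq}$. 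Nothing is missing; the argument is complete and in the spirit of the paper.
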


Therefore, firstly, this theorem establishes a fast transformation algorithm for rank parameters, Zelevinsky permutations, and indecomposable multiplicities. Moreover, this conclusion reveals the combinatorial nature of Zelevinsky permutations in a direct and transparent manner. Consequently, it provides us with a valuable means for further employing Zelevinsky maps to study the geometric problems of quiver loci. See Section \ref{singloc} for example.
\section{Further applications}
\subsection{Realization in $G/B$}\label{G/BKL}
Actually, we can also realize the type A quiver locus $\overline{\mathcal{O}_\mathbf{r}}$ as a Kazhdan-Lusztig variety in the full flag variety $G/B$ instead of in the partial flag $G/P_Q$. We only need to point out the isomorphism from $Y^{v_Q}_{w_Q(\mathbf{r})}\subset G/P_Q$ to some Kazhdan-Lusztig variety $\overline{B\gamma B/B} \cap B^-\theta B/B, \gamma,\theta \in W_G$. Let $P$ be a general $(S,S)$-blocked parabolic subgroup for a sequence $S$, and other notations are also the same as in Section \ref{KL}. For any $w\in W^P$, we define $\tilde{w}$ to be the longest element in the coset $wW_P$. Then we have that $B^-vP/P\cong B^-\tilde{v}B/B$, where $v\in W^P$ and $B^-\tilde{v}B/B$ can be identified with a matrix space $\tilde{\mathbb{A}}^{\tilde{v}}$ isomorphic to the matrix space $\mathbb{A}^v$ as in (\ref{AQ}). The following is an example of this isomorphism $\mathbb{A}^v\to \tilde{\mathbb{A}}^{\tilde{v}}$, where the dashed lines indicate the size of blocks in parabolic/Borel subgroups.
$$\left[\begin{array}{cc:ccc:cc}
     &  &1  &0  & 0 &  &  \\
     &  & 0 & 1 &  0&  &  \\
     &  & 0 &0  & 1 &  &  \\
    1 & 0 &  &  &  &  &  \\
    0 & 1 &  &  &  &  &  \\
     x_{61}& x_{62} & x_{63} & x_{64} &x_{65}  & 1 & 0 \\
    x_{71} & x_{72} & x_{73} & x_{74} & x_{75} &  0& 1
  \end{array}\right]\mapsto \left[\begin{array}{c:c:c:c:c:c:c}
     &  &0 &0  & 1 &  &  \\
     &  & 0 & 1 &  0&  &  \\
     &  & 1 &0  & 0 &  &  \\
    0 & 1 &  &  &  &  &  \\
    1 & 0 &  &  &  &  &  \\
     x_{62}& x_{61} & x_{65} & x_{64} &x_{63}  & 0 & 1 \\
    x_{72} & x_{71} & x_{75} & x_{74} & x_{73} &  1& 0
  \end{array}\right]$$
Let $Z$ be the generic matrix in the matrix space $\tilde{\mathbb{A}}^{\tilde{v}}$. Then, the Kazhdan-Lusztig variety $\overline{B\tilde{w}B/B}\cap B^-\tilde{v}B/B$ is defined by the ideal generated by $$\mbox{minors of size $\rank \tilde{w}_{i\times j}+1=\rank w_{i\times j}+1$ in $Z_{i\times j}$, $\forall i,j\in \{1,2,\cdots,N\}$}.$$ At this time, if $w\in W^P$ is of Z-type then we have that $\tilde{w}$ is the unique longest element in $W_{P'}\tilde{w}W_P$ and by using the Fulton's essential box or the similar analysis to that after Definition \ref{ztype} we can obtain that the defining ideal of $\overline{B\tilde{w}B/B}\cap B^-\tilde{v}B/B$ is generated by $$\mbox{minors of size $\rank w_{[x]\times [y]}+1$ in $Z_{[x]\times [y]}$, $\forall x,y\in Q_0$},$$ which is exactly the defining ideal $I_w$ under the isomorphism $\tilde{\mathbb{A}}^{\tilde{v}}\cong \mathbb{A}^{v}$. That follows that our previous Kazhdan-Lusztig variety $Y^{v_Q}_{w_Q(\mathbf{r})}$ is also isomorphic to $\overline{B\widetilde{w_Q(\mathbf{r})}B/B}\cap B^-\widetilde{v_Q}B/B$ in $G/B$. Taking Example \ref{Zex} as running example, the isomorphism from quiver locus $\overline{\mathcal{O}_{\mathbf{r}}}$ to $\overline{B\widetilde{w_Q(\mathbf{r})}B/B}\cap B^-\widetilde{v_Q}B/B$ is precisely
$$V=(A_\cdot,B_\cdot)\mapsto          \left[\begin{matrix}
                                          0 & 0 & 0 & J & 0 & 0 & 0 \\
                  0 & 0 & 0 & B_1J & J & 0 & 0 \\
                  0 & J & 0 & 0 & 0 & 0 & 0 \\
                  J & 0 & 0 & 0 & 0 & 0 & 0 \\
                   A_3J & B_3J & 0 & 0 & 0 & 0 & J \\
                  0 & A_2J & J & 0 & 0 & 0 & 0 \\
                   0 & 0 & A_1J & B_2B_1J & B_2J & J & 0\\
                                      \end{matrix}\right],$$ where $J$ is the anti-diagonal identity matrix.

                                      Realizing the quiver locus as a Kazhdan-Lusztig variety in the full flag $G/B$ has obviously numerous benefits. For instance, the study of Schubert varieties and Kazhdan-Lusztig varieties in the full flag variety is more advanced than in the partial flag variety. Therefore, we can apply more mature direct results to address the problems concerning type A quiver loci. However, as discussed in Section \ref{Zex}, most of the information in Zelevinsky permutations is stored in blocks. It is not favorable to apply the research method to full flag varieties roughly without considering the special structure of Zelevinsky permutations and these overall information. And when we are considering this information block-wisely, essentially we are addressing the problems in partial flag variety. Also, we can see that the construction for partial flag variety is more natural and logically comprehensible. And later a prospect to the generalization for type D quiver loci further reveals the practicality of partial flags.

                                      Therefore, we believe that realizing the Kazhdan-Lusztig variety in either partial flags or full flags has different advantages, and it is worthwhile to consider both of them here.
\subsection{Geometric vertex decomposability of defining ideal}\label{GVD}
Compared to the previous K-R's approach, a significant advantage of our new map is that we directly identify every type A quiver locus to a Kazhdan-Lusztig variety. Having explicit local equations, rather than just an open immersion, can lead to a great deal of research and can facilitate the handling of various problems. For example, it allows for checking the geometric vertex decomposability of the defining ideal of a type A quiver locus. Checking geometric vertex decomposability is highly dependent on the coordinates used, and having explicit equations has proven invaluable in the context of toric ideals of graphs, determinantal ladder ideals, and Hessenberg varieties.

Klein and Rajchgot have recently showed that geometric vertex decompositions correspond to elementary G-biliaisons, which in turn implies the glicci property. They showed that the defining ideal of equioriented type A quiver loci is geometric vertex decomposable. However the cases for arbitrary orientation are still open. Now we can complete this result for arbitrary orientation.

See also \cite{GVDandliaison} for the following definitions. Let $R=k[x_1,x_2,\cdots,x_t]$ be a polynomial ring and $y=x_j$ for some $1\leq j\leq t$. Define the initial $y$-form $\initial_y f$ of a polynomial $f\in R$ to be the sum of all terms of $f$ having the highest power of $y$. Given an ideal $I\subset R$, define $\initial_y I$ to be ideal generated by the initial $y$-forms of elements of $J$, i.e. $\initial_y I=<\initial_y f|f\in I>$. For an ideal $I$ let $\{y^{s_i}q_i+r_i|i\}$ be a Gr$\ddot{\mbox{o}}$bner basis of $I$ with respect to a $y$-compatible monomial order $<$ (i.e. $\initial_< f=\initial_< (\initial_y f),\forall f\in R$), where $y\not|q_i$ and $\initial_y (y^{s_i}q_i+r_i)=y^{s_i}q_i$ for all $i$. Independent with the choice of order $<$ and Gro$\ddot{\mbox{o}}$bner basis, we define $C_{y,I}=\left<q_i|i\right>$ and $N_{y,I}=\left<q_i|s_i=0\right>$. If $\initial_y I=C_{y,I}\cap (N_{y,I}+\left<y\right>)$ in $R$, this decomposition is called a geometric vertex decomposition of $I$ with respect to $y$.
\begin{defn}
 An ideal $I\subset R$ is geometrically vertex decomposable if $I$ is unmixed (i.e. $\dim R/I'=\dim R/I,\forall I'\in Ass(I)$ an associated prime) and
  \begin{itemize}
    \item[(1)] $I=R$ or $I$ is generated by indeterminates in $R$ or
    \item[(2)] for some variable $y=x_j$ of $R$, $\initial_y I=C_{y,I}\cap (N_{y,I}+\left<y\right>)$ is a geometric vertex decomposition and the contractions of $N_{y,I}$ and $C_{y,I}$ to $k[x_1,x_2,\cdots,\hat{y},\cdots,x_t]$ ($y$ removed) are geometrically vertex decomposable.
  \end{itemize}
\end{defn}

The defining ideal of a Kazhdan-Lusztig variety $\overline{B\gamma B/B}\cap B^-\theta B/B, \gamma,\theta\in W_G$ in affine space $B^-v B/B$ is called a Kazhdan-Lusztig ideal. Klein and Rajchgot showed that the defining ideal of an equioriented type A quiver locus is geometrically vertex decomposable through extend the ideal in the polynomial ring to a Kazhdan-Lusztig ideal in a larger polynomial ring, which is known to be geometrically vertex decomposable as a Stanley-Reisner ideal of subword complex. Now, we can upgrade their results not only for arbitrary orientations, but also with a direct proof.

\begin{prop}
  The defining ideal of type A quiver locus of arbitrary orientation is geometrically vertex decomposable, and consequencely glicci (in the Gorenstein liaison class of a complex intersection, see \cite{GVDandliaison}).
\end{prop}
\begin{proof}
  Since the defining ideal of any type A quiver locus is a Kazhdan-Lusztig ideal itself by Section \ref{G/BKL}, and the Kazhdan-Lusztig ideal is geometrically vertex decomposable \cite{GbasisforKL}, we immediately complete the proof.
\end{proof}
\subsection{Singularities of type A quiver locus}\label{singloc}
Let $T_x X$ be the tangent space of a variety $X$ at point $x$. The dimension $\dim T_x W\geq \dim X$ and $x$ is a singular (resp. smooth) point in $X$ if and only if $\dim T_xX\geq \dim X$ (resp. $\dim T_x X=\dim X$. All the singular points in $X$ form a closed subvariety $\sing X$of $X$, which is called the singular locus of $X$. If $X$ is a $H$-space of some algebraic group $H$, then the singular locus $\sing X$ is also $H$-stable, and there is $\sing X=\cup \overline{H.y}$ for several $\overline{H.y}$ that does not contain each other pairwise. Such $y$ (also, $H.y$) is called the maximal singularities or generic singularities of $X$. Now our problem is to discuss the dimensions of tangent spaces and the maximal singularities of type A quiver locus.

In previous research, the dimension of the tangent space $T_W \overline{\mathcal{O}_{\mathbf{r}}}$ at a point (a representation) $W$ is estimated by the extension of representations. Now we can provide an explicit formula.

The notations are as in Section \ref{KL}. Here we mainly work on the partial flag variety $G/P_Q$ as before, and many of the following results are valid for the general $(S',S)$-blocked situation (i.e. for general parabolic $P$ and arbitrary sequence of indexes $S'$) as well.
\begin{lem}\label{dataslemma}
For $\tau\leq w$ in $W^P$ there is there is isomorphism $$Y^v_w\times X_v^\circ\to X_w\cap vX^{id}_\circ,$$ where $id\in W_G$ is the identity and the isomorphism sends $(X_w^\circ\cap X^v_\circ)\times X_v^\circ$ onto $X_w^\circ\cap vX^{id}_\circ$. Here $vX^{id}$ is an affine open neighborhood of $e_v:=vP/P$ in $G/P$. The dimension of Kazhdan-Lusztig variety $Y^v_w$ is $$\dim Y^v_w= l(w)-l(v),$$ and $$\dim T_{e_\tau} X_w=\#\{1\leq i<j\leq N|s_{ij}\tau \leq w\}$$
  $$=\#\{1\leq i<j\leq N| \tau< s_{ij}.\tau \leq w\}+l(\tau),$$ where $s_{ij}\in W_G$ is the permutation transposing $i$ and $j$ and it acts on $W^P$ naturally (i.e. $s_{ij}.\tau$ is the shortest representative in $s_{ij}\tau W_P$, where $s_{ij}\tau$ is the product of permutations $s_{ij}$ and $\tau$).
\end{lem}

\begin{proof}
  See \cite[(1.3)]{SchubertPoincare}, \cite[Chap. 4]{introtoSMT}, and \cite[Chap. 13]{flagvarieties}, this lemma can be obtained easily by generalizing the results to partial flag varieties. See \cite[Lemma 4.3.3]{introtoSMT} for the dimensions of Schubert varieties. For the $(S'_Q,S_Q)$-blocked Z-type permutations $\tau$ and $w$ this lemma also can be proved by combining our Section \ref{G/BKL} with the results \cite[(1.3)]{SchubertPoincare} and \cite[Chap. 13]{flagvarieties} on full flags $G/B$, which is enough in our subsequent contents.
\end{proof}

Given any $(x,y)\in S'_Q\times S_Q$, we define the areas $$\mathcal{N}(x,y)=\{(N^i(x),y)|i>0\},\mathcal{E}(x,y)=\{(x,E^j(y))|j>0|\},$$ $$\mathcal{NE}(x,y)=\{(N^i(x),E^j(y))|i,j>0\}$$ and similarly $\mathcal{S}(x,y),\mathcal{W}(x,y),\mathcal{NW}(x,y),\mathcal{SW}(x,y),\mathcal{SE}(x,y)$. Their union is simply presented such as $(\mathcal{SE}\cup\mathcal{S})(x,y):=\mathcal{SE}(x,y)\cup\mathcal{S}(x,y)$.
$$\begin{blockarray}{llcr}
 \begin{block}{l[l|c|r]}
                 &   &    &  \\
                  &  \mathcal{NE}(x,y)  &  \mathcal{N}(x,y)  & \\\cline{2-3}
              x    &  \mathcal{E}(x,y)   & (x,y)   &  (\mathcal{NW}\cup\mathcal{W})(x,y)  \\\cline{2-4}
                 &   &     &   \\
                  &  \mathcal{SE}(x,y)   &    &   \\
                  \end{block}
 &     &  y &      \end{blockarray}$$
Because of the special structure of Zelevinsky permutations, the dimensions of the type A quiver loci and their tangent spaces can be computed block by block. Given any permutation $v\leq w\in W^P$ for both $v$ and $w$ being Z-type we have $$\dim Y^{v}_{w}=l(w)-l(v)$$
$$=\sum_{\tiny{\begin{array}{c}(x,y)\in S'_Q\times S_Q\\(x',y')\in (\mathcal{SW}\cup\mathcal{W})(x,y)\end{array}}}M(w)_{[x],[y]}M(w)_{[x'],[y']}-l(v).$$

By Proposition \ref{orbitimage}, we only need to focus on the tangent space of $Y^v_w$ at $e_\tau$ such that $\tau$ is a permutation of Z-type and $v\leq \tau\leq w$: $\mathcal{O}_\mathbf{r'}$ is singular in $\overline{\mathcal{O}_{\mathbf{r}}}$ if and only if $e_{w_Q(\mathbf{r'})}$ is singular in $X_{w_Q(\mathbf{r})}$. Consider $1\leq i<j\leq N$ and assume that the positions (of entries) $(i,\tau^{-1}(i))$ and $(j,\tau^{-1}(j))$ are contained in blocks $([x],[y])$ and $([x'],[y'])$, respectively, in a $(S'_Q,S_Q)$-blocked matrix. Then $i,j$ satisfy $\tau< s_{ij}.\tau$ if and only if $(x,y)\in\mathcal{NW}(x',y')$. And note that at this time $\rank (s_{ij}.\tau)_{[p]\times[q]}$ is given by
$$\left\{\begin{array}{ll}
    \rank \tau_{[p]\times[q]}+1, & \mbox{if $(p,q)\in (\mathcal{SE}\cup \mathcal{S})(x,y))\cap (\mathcal{NW}\cup\mathcal{W})(x',y')$} \\
    \rank \tau_{[p]\times[q]}, & \mbox{else.}
  \end{array}\right.$$
      \begin{tikzpicture}
        \draw [step=2] (0,0) grid (2,-2);
        \draw [dashed,step=2] (0,-2) grid (4,-4);
        \draw [step=2] (4,-2) grid (6,-4);
        \node at (0.5,-1) {$1$};
        \node at (5,-3.5) {$1$};
        \node at (-2.5,-1) [align=center]{the $i$-th row};
        \node at (0.5,-5) [align=center]{the $\tau^{-1}(i)$-th column};
        \draw (-1,-1) -- (0.2,-1);
        \draw (0.5,-4.5) -- (0.5,-1.3);
        \node at (-2.5,-3.5) [align=center]{the $i$-th row};
        \node at (5,-5) [align=center]{the $\tau^{-1}(i)$-th column};
        \draw (-1,-3.5) -- (4.8,-3.5);
        \draw (5,-4.5) -- (5,-3.8);
        \node at (3,1) [align=center]{the $([x],[y])$-block};
        \node at (5.5,-1.3) [align=center]{$(\mathcal{SE}\cup \mathcal{S})(x,y))\cap (\mathcal{NW}\cup\mathcal{W})(x',y')$};
        \node at (8,-3) [align=center]{the $([x'],[y'])$-block};
        \draw (2 ,0.7) parabola (1.5,-0.5);
        \draw (5.5 ,-2.5) parabola (7.5,-2.7);
        \draw (1.5,-2.7) parabola (3.5,-1.6);
        \draw (3,-2.7) parabola (3.5,-1.6);
    \end{tikzpicture}

We define the area of differences of southwest ranks for $\tau$ and $w$ by $$\mathcal{D}(\tau,w):=\{(p,q)\in S'_Q\times S_Q|\rank \tau_{[p]\times[q]}\neq \rank w_{[p]\times[q]}\}.$$Then $s_{ij}.\tau \notin \tau W_P, \tau< s_{ij}.\tau \leq w$ if and only if the corresponding $(x,y)$ and $(x',y')$ satisfy
  \begin{itemize}
    \item $(x,y)\in\mathcal{NW}(x',y')$, and
    \item $(\mathcal{SE}\cup \mathcal{S})(x,y))\cap (\mathcal{NW}\cup\mathcal{W})(x',y') \subset \mathcal{D}(\tau,w)$.
  \end{itemize}

For certain $\tau$ and $w$, we define $\mathcal{P}(\tau,w)$ to be the collection of the pairs $((x,y),(x',y'))\in (S'_Q\times S_Q)\times (S'_Q\times S_Q)$ as above. Then
$$\dim T_{e_\tau} X_w=\sum_{((x,y),(x',y'))\in \mathcal{P}(\tau,w)}M(\tau)_{[x],[y]}M(\tau)_{[x'],[y']}+l(\tau).$$
\begin{prop}
  For $\mathbf{r}$ a rank parameter of type A quiver representation in $rep_Q(\mathbf{d})$, we have
  \begin{equation}
  \dim \overline{\mathcal{O}_{\mathbf{r}}}=\sum_{\tiny{\begin{array}{c}(x,y)\in S'_Q\times S_Q\\(x',y')\in \mathcal{SW}(x,y)\cup\mathcal{W}(x,y)\end{array}}}M(w_Q(\mathbf{r}))_{[x],[y]}M(w_Q(\mathbf{r}))_{[x'],[y']}-l(v_Q),\end{equation} and for any representation $W\in\overline{\mathcal{O}_{\mathbf{r}}}$ with $W=\oplus m_{pq}I_{pq}\in \mathcal{O}_{\mathbf{r'}}$ there is
\begin{equation}\label{6.2}
\dim T_W \overline{\mathcal{O}_{\mathbf{r}}}-\dim \overline{\mathcal{O}_{\mathbf{r'}}}=\sum_{((x,y),(x',y'))\in \mathcal{P}(w_Q(\mathbf{r'}),w_Q(\mathbf{r}))}m_{yx}m_{y'x'}.
\end{equation} Here, \begin{equation}\label{6.3}
                       l(v_Q)=\sum_{s(\alpha)>s(\beta)}d_{s(\alpha)}d_{s(\beta)}+\sum_{t(\alpha)<t(\beta)}d_{t(\alpha)}d_{t(\beta)},
                     \end{equation}where $\alpha$ (resp. $\beta$) is taken over the left arrows $\alpha.$ (resp. right arrows $\beta.$) of $Q$.
\end{prop}
\begin{proof}
  To obtain equation (\ref{6.2}) we only need to note that for any $y=x+1$, neither $(S(x),y)$ nor $(x,W(y))$ belongs to $\mathcal{D}   (w_Q(\mathbf{r'}),w_Q(\mathbf{r}))$, because both of the southwest $[S(x)]\times[y]$ and $[x]\times[W(y)]$ matrices of the image of Zelevinsky map are of constant rank (see matrices (\ref{*}) and (\ref{**}), or Example \ref{Zex}). And equation (\ref{6.3}) can be checked by directly considering the intersection $\{p|x\prec p\mbox{ in $S'_Q$}\}\cap\{q|q\prec x\mbox{ in $S_Q$}\}$ for different $x\in Q_0$ (source/sink, general point in left/right path).
\end{proof}
\begin{coro}
  Given $W\in \mathcal{O}_{\mathbf{r'}}\subset \overline{\mathcal{O}_{\mathbf{r}}}$, the dimension of $T_W \overline{\mathcal{O}_{\mathbf{r}}}$ only depends on $\mathbf{r'}$ and
  $$D(\mathbf{r'},\mathbf{r}):=\{[a,b]\subset Q|r_{[a,b]}\neq r'_{[a,b]}\}.$$ Moreover, if $\mathbf{r}$ is not the maximal element in $$\{\mbox{rank parameter }\mathbf{r''}|D(\mathbf{r'},\mathbf{r''})=D(\mathbf{r'},\mathbf{r})\}$$ under the degeneration order, i.e. $\exists \mathbf{r''},D(\mathbf{r'},\mathbf{r''})=D(\mathbf{r'},\mathbf{r})$ and $\overline{\mathcal{O}_{\mathbf{r}}}\subsetneqq \overline{\mathcal{O}_{\mathbf{r''}}}$, then $W$ is a singular point of $\overline{\mathcal{O}_{\mathbf{r}}}$.
\end{coro}
\begin{proof}
  If there exists $\mathbf{r''},D(\mathbf{r'},\mathbf{r''})=D(\mathbf{r'},\mathbf{r})$ and $\overline{\mathcal{O}_{\mathbf{r}}}\subsetneqq \overline{\mathcal{O}_{\mathbf{r''}}}$, then $$\dim T_W\overline{\mathcal{O}_{\mathbf{r}}}=\dim T_W\overline{\mathcal{O}_{\mathbf{r''}}}\geq \dim \overline{\mathcal{O}_{\mathbf{r''}}}> \dim \overline{\mathcal{O}_{\mathbf{r}}},$$ which indicates that $W$ is singular point of $\overline{\mathcal{O}_{\mathbf{r}}}$.
\end{proof}

Next, we consider the smoothness criterion for points in the type A quiver locus. Let $v\leq\tau\leq w$ be the permutations of Z-type as before. Let
$$\codim X(\tau,w):=\dim X_w-\dim X_\tau=\dim Y^v_w-\dim Y^v_\tau=l(w)-l(v),$$
and $$\codim T(\tau,w):=\dim T_{e_\tau}X_w-\dim T_{e_\tau}X_\tau=\dim T_{e_\tau}Y^v_w-\dim T_{e_\tau}$$ $$=\sum_{((x,y),(x',y'))\in \mathcal{P}(\tau,w)}M(\tau)_{[x],[y]}M(\tau)_{[x'],[y']}.$$ Since $\dim T_{e_\tau}X_\tau=\dim X_\tau$, then we have $\codim T(\tau,w)\geq \codim X(\tau,w)$ and they take equal if and only if $e_\tau$ is smooth in $X_w$. Let $\delta(\tau,w)=M(\tau)-M(w)$, where its rows and columns are also labeled by $S'_Q$ and $S_Q$ like $M(\tau),M(w)$. Compute\small{
\begin{equation}\label{6.4}\begin{array}{rl}&\codim X(\tau,w)\\=&\sum\limits_{\tiny{\begin{array}{c}(x,y)\in S'_Q\times S_Q\\(x',y')\in \mathcal{SW\cup W}(x,y)\end{array}}}(M(w)_{[x],[y]}M(w)_{[x'],[y']}-M(\tau)_{[x],[y]}M(\tau)_{[x'],[y']})\\
=&\sum\limits_{\tiny{\begin{array}{c}(x,y)\in S'_Q\times S_Q\\(x',y')\in \mathcal{SW\cup W}(x,y)\end{array}}}(M(\tau)_{[x],[y]}-\delta(\tau,w)_{[x],[y]})(M(\tau)_{[x'],[y']}-\delta(\tau,w)_{[x'],[y']})\\
&-\sum\limits_{\tiny{\begin{array}{c}(x,y)\in S'_Q\times S_Q\\(x',y')\in \mathcal{SW\cup W}(x,y)\end{array}}}M(\tau)_{[x],[y]}M(\tau)_{[x'],[y']}\\
=&\sum\limits_{(x,y)\in S'_Q\times S_Q}(M(\tau)_{[x],[y]}-\frac{1}{2}\delta(\tau,w)_{[x],[y]})\sum\limits_{(x',y')\in (\mathcal{SE\cup NW})(x,y)}\delta(\tau,w)_{[x'],[y']},\end{array}\end{equation}}
the last equality holds because the sum of each row or column of $\delta(\tau,w)$ is zero. And \small{
\begin{equation}\label{6.5}\begin{array}{rl}&\codim T(\tau,w)\\=&\sum\limits_{((x,y),(x',y'))\in \mathcal{P}(\tau,w)}M(\tau)_{[x],[y]}M(\tau)_{[x'],[y']}\\
=&\sum\limits_{((x,y),(x',y'))\in \mathcal{P}(\tau,w)}(M(w)_{[x],[y]}+\delta(\tau,w)_{[x],[y]})(M(w)_{[x'],[y']}+\delta(\tau,w)_{[x'],[y']})\\
=&\sum\limits_{((x,y),(x',y'))\in \mathcal{P}(\tau,w)}M(w)_{[x],[y]}M(w)_{[x'],[y']}+\sum\limits_{(x,y)\in S'_Q\times S_Q} M(w)_{[x],[y]}\cdot\\
&(\sum\limits_{((x,y),(x',y'))\in \mathcal{P}(\tau,w)}\delta(\tau,w)_{[x'],[y']}+\sum\limits_{((x',y'),(x,y))\in \mathcal{P}(\tau,w)}\delta(\tau,w)_{[x'],[y']})\\
&+\sum\limits_{((x,y),(x',y'))\in \mathcal{P}(\tau,w)}\delta(\tau,w)_{[x],[y]}\delta(\tau,w)_{[x'],[y']}\\
=&\sum\limits_{((x,y),(x',y'))\in \mathcal{P}(\tau,w)}M(w)_{[x],[y]}M(w)_{[x'],[y']}+\sum\limits_{(x,y)\in S'_Q\times S_Q}(M(w)_{[x],[y]}+\frac{1}{2}\delta(\tau,w)_{[x],[y]})\cdot\\
&(\sum\limits_{((x,y),(x',y'))\in \mathcal{P}(\tau,w)}\delta(\tau,w)_{[x'],[y']}+\sum\limits_{((x',y'),(x,y))\in \mathcal{P}(\tau,w)}\delta(\tau,w)_{[x'],[y']}).\end{array}\end{equation}}
Note that in equations (\ref{6.4}) and (\ref{6.5}), $$M(\tau)_{[x],[y]}-\frac{1}{2}\delta(\tau,w)_{[x],[y]}=M(w)_{[x],[y]}+\frac{1}{2}\delta(\tau,w)_{[x],[y]}=\frac{1}{2}(M(w)_{[x],[y]}+M(\tau)_{[x],[y]}).$$

\begin{lem}\label{lemsp2}
  Given $\tau,w$ as above, for any $(x,y)\in S'_Q\times S_Q$ we have inequalities
  \begin{equation}\label{inequ}\begin{array}{c}
    \sum\limits_{(x',y')\in \mathcal{SE}(x,y)}\delta(\tau,w)_{[x'],[y']}\leq \sum\limits_{((x,y),(x',y'))\in \mathcal{P}(\tau,w)}\delta(\tau,w)_{[x'],[y']},\\
  \sum\limits_{(x',y')\in \mathcal{NW}(x,y)}\delta(\tau,w)_{[x'],[y']}\leq \sum\limits_{((x',y'),(x,y))\in \mathcal{P}(\tau,w)}\delta(\tau,w)_{[x'],[y']}.\end{array}
  \end{equation}
  And there exists at least one inequality (among all $(x,y)$) is strictly `$<$' if and only if (the diagram of) $\mathcal{D}(\tau,w)$ contains a pattern    $$     \begin{tikzpicture}
\draw (0,0)[step=0.6] grid (1.8,-1.20001);
\draw (0.6,-0.6) [step=0.6] grid (2.4,-1.8001);
\node at (2.1,-0.3){$\times$};
\node at (0.3,-1.5){$\times$};
\node at (4.8,-0.9)[align=left]{Instructions\\\emph{\fcolorbox{black}{white}{\rule{0pt}{5pt}\rule{5pt}{0pt}}: in $\mathcal{D}(\tau,w)$}\\\emph{$\times$: not in $\mathcal{D}(\tau,w)$}\\\emph{Others: undetermined}};
\node at (-1,-0.3){$x_2$};
\node at (2.1,-2.5){$y_2$};
\node at (-1,-1.5){$x_1$};
\node at (0.3,-2.5){$y_1$};
    \end{tikzpicture}$$
  i.e. there is $(x_1,y_1)$ and $(x_2,y_2)$ such that
  \begin{itemize}
    \item $(x_2,y_2)\in \mathcal{NE}(x_1,y_1)$ and $(x_1,y_1),(x_2,y_2)\notin \mathcal{D}(\tau,w)$;
    \item $(\mathcal{N}\cup \mathcal{NE}\cup\mathcal{E})(x_1,y_1)\cap (\mathcal{S}\cup \mathcal{SW}\cup \mathcal{W})(x_2,y_2)\in \mathcal{D}(\tau,w)$.
  \end{itemize}
\end{lem}
\begin{proof}
  We only prove the first inequality in \ref{inequ}. Given a fixed position $(x,y)$, we draw the maximal Young diagram $\mathcal{Y}$ which is contained in $\mathcal{D}(\tau,w)$ and extends southeast based on position $(S(x),y)$. Then the set $\{(x',y')|((x,y),(x',y'))\in \mathcal{P}(\tau,w)\}$ is exactly $\{(x',y')|(x',W(y'))\in\mathcal{Y}\}$.
$$\begin{tikzpicture}
\draw [dashed,step=0.6] (0,0) grid (0.6,0.6001);
\node at (-0.6,0.3) {$x$};
\node at (-0.6,-0.3) {$S(x)$};
\node at (0.3,1.2){$y$};
\draw [line width =0.6pt,step=0.6](0,0) grid (1.8,-1.8001);
\draw [line width =0.6pt,step=0.6](1.8,0) grid (2.4,-0.6001);
\draw [line width =0.6pt,step=0.6](0,-1.8) grid (0.6,-2.4001);
\node at (2.7,-0.3) {$\times$};
\node at (2.1,-0.9) {$\times$};
\node at (0.9,-2.1) {$\times$};
\node at (0.3,-2.7) {$\times$};
\draw [dashed](0,0) -- (4,0);
\draw [dashed](0,0) -- (0,-4);
\node at (8,-2) [align=left]{\emph{Instructions}\\\fcolorbox{black}{white}{\rule{0pt}{5pt}\rule{5pt}{0pt}}: the Young diagram $\mathcal{Y}$, in $\mathcal{D}(\tau,w)$\\$\times$: not in $\mathcal{D}(\tau,w)$\\\fcolorbox{gray!40}{gray!40}{\rule{0pt}{5pt}\rule{5pt}{0pt}}: in $\{(x',y')|(x',W(y'))\in\mathcal{Y}\}$\\Others: undetermined};
\draw [dashed,step=0.6](0.6,0) grid (2.4,-1.8001);
\draw [dashed,step=0.6](2.4,0) grid (3,-0.6001);
\draw [dashed,step=0.6](0.6,-1.8) grid (1.2,-2.4001);
\draw [fill=gray,opacity=0.2](0.6,-1.8) rectangle (1.2,-2.4);
\draw [fill=gray,opacity=0.2](0.6,0) rectangle (2.4,-1.8);
\draw [fill=gray,opacity=0.2](2.4,0) rectangle (3,-0.6);
\end{tikzpicture}$$

Note that $\delta(\tau,w)$ can be obtained by solely considering the difference between $\rank \tau_{[p]\times[q]}$ and $\rank w_{[p]\times[q]}$ for every single $(p,q)\in S'_Q\times S_Q$: that is, such a sole difference at $(p,q)$ influences only four positions $(p,q),(N(p),q),(p,E(q)),$ and $(N(p),E(q))$ in $\delta(\tau,w)$, and totally $\delta(\tau,w)$ is the independent composition of such influences. For example, the following diagram shows this influence on the entries of $\delta(\tau,w)$ when $\rank \tau_{[p]\times[q]}$ and $\rank w_{[p]\times[q]}$ differ in $1$. $$\begin{tikzpicture}
\draw [dashed] (0,0) grid (2,-2);
\draw  (0,-1) grid (1,-2);
\node at (-1,-0.5) {$N(p)$};
\node at (-1,-1.5) {$p$};
\node at (0.5,-2.6) {$q$};
\node at (1.5,-2.6) {$E(q)$};
\node at (0.5,-0.5) {$+1$};
\node at (1.5,-1.5) {$+1$};
\node at (0.5,-1.5) {$-1$};
\node at (1.5,-0.5) {$-1$};
\end{tikzpicture}$$

It is not hard to see that any such sole (admissible) difference of southwest ranks will not make the value $$ \sum\limits_{((x,y),(x',y'))\in \mathcal{P}(\tau,w)}\delta(\tau,w)_{[x'],[y']}-\sum\limits_{(x',y')\in \mathcal{SE}(x,y)}\delta(\tau,w)_{[x'],[y']} $$ will decrease. For example, in the following three diagrams, the difference of ranks at positions in the first diagram does not affect the above value; the positions in the second diagram could make the above value decrease, but these positions must be not in $\mathcal{D}(\tau,w)$; and the positions in the third diagram make the above value increase. $$\begin{tikzpicture}
\draw [dashed,step=0.6] (0,0) grid (0.6,0.6001);
\node at (-0.6,0.3) {$x$};
\node at (-0.6,-0.3) {$S(x)$};
\node at (0.3,1.2){$y$};
\node at (2.4,-3) {$\mathcal{SE}(x,y)$};

\draw [line width =0.6pt](1.2,0) rectangle (1.8,-0.6);
\node at (1.5,-0.3){\small{$-1$}};
\node at (2.1,-0.3){\small{$+1$}};
\node at (1.5,0.3){\small{$+1$}};
\node at (2.1,0.3){\small{$-1$}};

\draw [line width =0.6pt](1.2,-1.8) rectangle (1.8,-2.4);
\node at (1.5,-2.1){\small{$-1$}};
\node at (2.1,-2.1){\small{$+1$}};
\node at (1.5,-1.5){\small{$+1$}};
\node at (2.1,-1.5){\small{$-1$}};

\draw [dashed](0.6,0) -- (4,0);
\draw [dashed](0.6,0) -- (0.6,-4);
\draw [dashed,step=0.6](0.6,0) grid (2.4,-1.8001);
\draw [dashed,step=0.6](2.4,0) grid (3,-0.6001);
\draw [dashed,step=0.6](0.6,-1.8) grid (1.2,-2.4001);
\draw [fill=gray,opacity=0.2](0.6,-1.8) rectangle (1.2,-2.4);
\draw [fill=gray,opacity=0.2](0.6,0) rectangle (2.4,-1.8);
\draw [fill=gray,opacity=0.2](2.4,0) rectangle (3,-0.6);
\end{tikzpicture}\
\begin{tikzpicture}
\draw [dashed,step=0.6] (0,0) grid (0.6,0.6001);

\draw [line width =0.6pt](0,-2.4) rectangle (0.6,-3);
\node at (0.3,-2.7){\small{$-1$}};
\node at (0.9,-2.7){\small{$+1$}};
\node at (0.3,-2.1){\small{$+1$}};
\node at (0.9,-2.1){\small{$-1$}};

\draw [line width =0.6pt](1.8,-0.6) rectangle (2.4,-1.2);
\node at (2.1,-0.9){\small{$-1$}};
\node at (2.7,-0.9){\small{$+1$}};
\node at (2.1,-0.3){\small{$+1$}};
\node at (2.7,-0.3){\small{$-1$}};

\draw [dashed](0.6,0) -- (4,0);
\draw [dashed](0.6,0) -- (0.6,-4);
\draw [dashed,step=0.6](0.6,0) grid (2.4,-1.8001);
\draw [dashed,step=0.6](2.4,0) grid (3,-0.6001);
\draw [dashed,step=0.6](0.6,-1.8) grid (1.2,-2.4001);
\draw [fill=gray,opacity=0.2](0.6,-1.8) rectangle (1.2,-2.4);
\draw [fill=gray,opacity=0.2](0.6,0) rectangle (2.4,-1.8);
\draw [fill=gray,opacity=0.2](2.4,0) rectangle (3,-0.6);
\end{tikzpicture}\
\begin{tikzpicture}
\draw [dashed,step=0.6] (0,0) grid (0.6,0.6001);

\draw [line width =0.6pt,step=0.6](0,0) grid (1.8,-1.8001);
\draw [line width =0.6pt,step=0.6](1.8,0) grid (2.4,-0.6001);
\draw [line width =0.6pt,step=0.6](0,-1.8) grid (0.6,-2.4001);

\draw [line width =0.6pt](0.6,-2.4) rectangle (1.2,-3);
\node at (0.9,-2.7){\small{$-1$}};
\node at (1.5,-2.7){\small{$+1$}};
\node at (0.9,-2.1){\small{$+1$}};
\node at (1.5,-2.1){\small{$-1$}};

\draw [line width =0.6pt](2.4,-0.6) rectangle (3,-1.2);
\node at (2.7,-0.9){\small{$-1$}};
\node at (3.3,-0.9){\small{$+1$}};
\node at (2.7,-0.3){\small{$+1$}};
\node at (3.3,-0.3){\small{$-1$}};

\draw [dotted](-0.1,-1.7) rectangle (1.3,-3.1);
\draw [dotted](2.3,0.1) rectangle (3.7,-1.3);

\draw [dashed](0.6,0) -- (4,0);
\draw [dashed](0.6,0) -- (0.6,-4);
\draw [dashed,step=0.6](0.6,0) grid (2.4,-1.8001);
\draw [dashed,step=0.6](2.4,0) grid (3,-0.6001);
\draw [dashed,step=0.6](0.6,-1.8) grid (1.2,-2.4001);
\draw [fill=gray,opacity=0.2](0.6,-1.8) rectangle (1.2,-2.4);
\draw [fill=gray,opacity=0.2](0.6,0) rectangle (2.4,-1.8);
\draw [fill=gray,opacity=0.2](2.4,0) rectangle (3,-0.6);
\end{tikzpicture}$$The positions which make the value strictly increase obviously leads to an appearance of the special pattern in $\mathcal{D}(\tau,w)$, as shown in the dotted area in the third diagram.
\end{proof}
\begin{prop}[Weak Criterion]\label{weakcriterion}
  Given $\tau,w$ as above, then $e_\tau$ is a singular point in $X_w$ if and only if
  \begin{itemize}
    \item[(SP1)]$\exists ((x_1,y_1),(x_2,y_2))\in \mathcal{P}(\tau,w)$ such that $M(w)_{[x_1],[y_1]}>0,M(w)_{[x_2],[y_2]}>0$; or
    \item[(SP2)]$\exists (x,y),(x_1,y_1),(x_2,y_2)$ such that
  \begin{itemize}
    \item $M(w)_{[x],[y]}+M(\tau)_{[x],[y]}>0$;
    \item $(x_2,y_2)\in \mathcal{NE}(x_1,y_1)$ and $(x_1,y_1),(x_2,y_2)\notin \mathcal{D}(\tau,w)$;
    \item $(x_1,y_1),(x_2,y_2)\in (\mathcal{NW}\cup\mathcal{W})(x,y)$ and $\mathcal{SE}(x_2,y_1)\cap(\mathcal{NW}\cup\mathcal{W})(x,y)\subset \mathcal{D}(\tau,w)$; or $(x_1,y_1),(x_2,y_2)\in (\mathcal{SE}\cup\mathcal{S})(x,y)$ and $\mathcal{NW}(x_1,y_y)\cap (\mathcal{SE}\cup\mathcal{S})(x,y)\subset \mathcal{D}(\tau,w)$;
    \item $(\mathcal{N}\cup \mathcal{NE}\cup\mathcal{E})(x_1,y_1)\cap (\mathcal{S}\cup \mathcal{SW}\cup \mathcal{W})(x_2,y_2)\in \mathcal{D}(\tau,w)$.
  \end{itemize}
  \end{itemize}That is, the diagram of $\mathcal{D}(\tau,w)$ contains one of the following patterns, where `$+$' (resp. `$\triangle$') indicates that the entry of $M(w)$ (resp. $M(w)+M(\tau)$) is positive at that position, and other instructions are as before:  $$\begin{tikzpicture}
\draw [step=0.6] (0,0) grid (1.8,-1.2001);
\draw [dashed] (0,0.6) rectangle (0.6,0);
\draw [dashed] (1.8,-0.6) rectangle (2.4,-1.2);
\node at (0.3,0.3) {$+$};
\node at (2.1,-0.9) {$+$};
\node at (-0.6,0.3) {$x_1$};
\node at (-0.6,-0.9) {$x_2$};
\node at (0.3,-1.8) {$y_1$};
\node at (2.1,-1.8) {$y_2$};
\node at (0.6,-2.5) {(SP1)};
\end{tikzpicture}\hspace{1cm}
\begin{tikzpicture}
\draw [dashed] (0,0.6) rectangle (0.6,1.2);
\draw [step=0.6] (0,0.6) grid (2.4,-0.6001);
\draw [step=0.6] (1.2,-0.6) grid (3,-1.2001);
\node at (-0.6,0.9) {$x$};
\node at (0.3,-1.8) {$y$};
\node at (0.3,0.9) {$\triangle$};
\node at (-0.6,-0.3) {$x_2$};
\node at (-0.6,-0.9) {$x_1$};
\node at (0.9,-1.8) {$y_1$};
\node at (2.7,-1.8) {$y_2$};
\node at (1,-2.5) {(SP2)};
\node at (0.9,-0.9) {$\times$};
\node at (2.7,-0.3) {$\times$};\end{tikzpicture}\hspace{1cm}
\begin{tikzpicture}
\draw [step=0.6] (0,0) grid (2.4,-0.6);
\draw [step=0.6] (0.6,-0.6) grid (2.4,-1.2001);
\draw [step=0.6] (0,0) grid (0.6,0.6);
\draw [dashed] (2.4,-1.2) rectangle (3,-0.6);
\node at (2.7,-0.9) {$\triangle$};
\node at (2.7,-1.8) {$y$};
\node at (0.9,0.3) {$\times$};
\node at (0.3,-0.9) {$\times$};
\node at (1,-2.5) {(SP2)};
\node at (-0.6,-0.9) {$x,x_1$};
\node at (-0.6,0.3) {$x_2$};
\node at (0.3,-1.8) {$y_1$};
\node at (0.9,-1.8) {$y_2$};

\end{tikzpicture}$$
\end{prop}

Actually, we can obtain the stronger criterion as follows. However, due to the space limitations and consistency in writing logic, we will only present it as a claim here without providing proof. More detailed results and a systematic study will be presented in another article in the future.

\begin{claim}
  Given $\tau,w$ as above, then $e_\tau$ is a singular point in $X_w$ if and only if
  \begin{itemize}
    \item[(SP1)]$\exists ((x_1,y_1),(x_2,y_2))\in \mathcal{P}(\tau,w)$ such that $M(w)_{[x_1],[y_1]}>0,M(w)_{[x_2],[y_2]}>0$; or
    \item[(SP2')]$\exists (x_1,y_1),(x_2,y_2)$ such that
  \begin{itemize}
    \item $(x_2,y_2)\in \mathcal{NE}(x_1,y_1)$ and $(x_1,y_1),(x_2,y_2)\notin \mathcal{D}(\tau,w)$;
    \item $(\mathcal{N}\cup \mathcal{NE}\cup\mathcal{E})(x_1,y_1)\cap (\mathcal{S}\cup \mathcal{SW}\cup \mathcal{W})(x_2,y_2)\in \mathcal{D}(\tau,w)$.
  \end{itemize}
  \end{itemize}That is, the diagram of $\mathcal{D}(\tau,w)$ contains one of the following patterns:
  $$\begin{tikzpicture}
\draw [step=0.6] (0,0) grid (1.8,-1.2001);
\draw [dashed] (0,0.6) rectangle (0.6,0);
\draw [dashed] (1.8,-0.6) rectangle (2.4,-1.2);
\node at (0.3,0.3) {$+$};
\node at (2.1,-0.9) {$+$};
\node at (-0.6,0.3) {$x_1$};
\node at (-0.6,-0.9) {$x_2$};
\node at (0.3,-1.8) {$y_1$};
\node at (2.1,-1.8) {$y_2$};
\node at (0.6,-2.5) {(SP1)};
\end{tikzpicture}\hspace{1cm}\begin{tikzpicture}
\draw (0,0.6)[step=0.6] grid (1.8,-0.60001);
\draw (0.6,0) [step=0.6] grid (2.4,-1.2001);
\node at (2.1,0.3){$\times$};
\node at (0.3,-0.9){$\times$};
\node at (-0.6,0.3){$x_2$};
\node at (2.1,-1.8){$y_2$};
\node at (-0.6,-0.9){$x_1$};
\node at (0.3,-1.8){$y_1$};
\node at (0.6,-2.5) {(SP2')};
    \end{tikzpicture}$$
\end{claim}
\begin{examp}
  Consider $$\begin{tikzcd}
Q= 1 & 2 \arrow[l, "\alpha_1"] & 3 \arrow[l, "\alpha_2"] \arrow[r, "\beta_1"'] & 4 & 5 \arrow[l, "\alpha_3"]
\end{tikzcd}.$$ Let $V=3I_{14}\oplus 2I_{15}\oplus I_{55}\in rep_Q(\mathbf{d})$ with $\mathbf{d}=(3,3,4,3,5)$ and rank parameter $\mathbf{r}$. Choose $W=5I_{14}\oplus 3I_{55}\in \overline{\mathcal{O}_V}$ with rank parameter $\mathbf{r'}$. then the multiplicity matrix of $w_Q(\mathbf{r'})$ The multiplicity matrices of $w_Q(\mathbf{r})$ and $w_Q(\mathbf{r'})$ are respectively
$$M(w_Q(\mathbf{r}))=\begin{blockarray}{lccccc}
 \begin{block}{l[ccccc]}
                 3 &  0 & 0 & 0 & 0 & 4  \\
                5  &  1 & 0 & 0 & 2 & 0   \\\cline{2-4}
                 4 &  \BAmulticolumn{1}{|c|}{2} & \BAmulticolumn{1}{|c|}{0} & \BAmulticolumn{1}{|c|}{0} & 3 & 0   \\\cline{2-4}
                 2 &  0 & 3 & 0 & 0 & 0  \\
                 1 &   0& 3 & 0 & 0 & 0  \\
                  \end{block}
 &5 &  3 &  2 &  1 &  4 \end{blockarray},\ \ \ M(w_Q(\mathbf{r'}))=\begin{blockarray}{lccccc}
 \begin{block}{l[ccccc]}
                 3 &  0 & 0 & 0 & 0 & 4  \\
                5  &  3 & 0 & 0 & 0 & 0   \\
                 4 &  0 & 0 & 0 & 5 & 0    \\
                 2 &  0 & 3 & 0 & 0 & 0   \\
                 1 &   0& 3 & 0 & 0 & 0  \\
                  \end{block}
 &5 &  3 &  2 &  1 &  4 \end{blockarray}.$$ And $\mathcal{D}(w_Q(\mathbf{r'}),w_Q(\mathbf{r}))=\{(4,5),(4,3),(4,2)\}$ which is also marked out by boxes in $M(w_Q(\mathbf{r}))$. Then the pair $((5,5),(4,1))\in \mathcal{P}(w_Q(\mathbf{r'}),w_Q(\mathbf{r}))$, and $$M(w)_{[5],[5]}>0,M(w)_{[4],[1]}>0,$$ which indicates that $e_{w_Q(\mathbf{r'})}$ is a singular point in $X_{w_Q(\mathbf{r})}$ by Proposition \ref{weakcriterion} (SP1). Hence, $W$ is a singular point in $\overline{\mathcal{O}_V}$.
\end{examp}
\begin{examp}[Variety of complexes]The variety of complexes is a special type A quiver locus and it is the only quiver locus with known singular loci for a long time in the past. The quiver loci of the variety of complexes is determined in \cite{singcom1} and \cite{singcom2}, also through the Zelevinsky map for equioriented type A quiver. Here, as an example, we will see how the discussion in this section can help solve this problem quickly. Let $Q$ be the quiver $$\begin{tikzcd}
1 \arrow[r, "\beta_1"'] & 2 \arrow[r, "\beta_2"'] & 3 \arrow[r,"\beta_{3}"'] & \cdots \arrow[r, "\beta_{n-1}"'] & n
\end{tikzcd}$$ and consider the quiver locus $\overline{\mathcal{O}_{\mathbf{r}}}$ with $r_{[a,b]}=0$ for $b>a+1$ and $r_{[a,a+1]}=r_a$ for $1\leq a\leq n-1$, where $r_a$ is a family of numbers with $r_{a-1}+r_{a}\leq d_{a}$ for all $1\leq a\leq n-1$ (put $r_0=0$ and without loss of generality we assume $r_a\neq 0$ for $1\leq a\leq n-1$). The image of $V=(B_\cdot)=\oplus m_{a,a+1}I_{a,a+1}\in \overline{\mathcal{O}_{\mathbf{r}}}$ under $\zeta_Q$ is
$$\zeta_Q(V)=\begin{blockarray}{lccccc}\begin{block}{l[ccccc]}
                 1 &  I & 0 &0 &  & 0  \\
                2  &  B_1 & I & 0 &  & 0   \\
                 3 &  0 & B_2 & I & & 0    \\
                 \vdots &   &  & \ddots & \ddots &    \\
                 n &   0& 0 & \cdots & B_{n-1} & I  \\
                  \end{block}
 &1 &  2 &  3 &  \cdots &  n \end{blockarray},$$And the multiplicity matrix is $$\zeta_Q(V)=\begin{blockarray}{lccccc}\begin{block}{l[ccccc]}
                 1 &  m_{11} & m_{12} &0 &  & 0  \\
                2  &  m_{12} & m_{22} & m_{23} &  & 0   \\
                 3 &  0 & m_{23} & m_{33} & & 0    \\
                 \vdots &   &  & \ddots & \ddots &    \\
                 n &   0& 0 & \cdots & m_{n-1,n} & m_{nn}  \\
                  \end{block}
 &1 &  2 &  3 &  \cdots &  n \end{blockarray},$$

Let us try to find some $\mathbf{r'}\leq \mathbf{r}$ with permutations $w_Q(\mathbf{r'})\leq w_Q(\mathbf{r})$ satisfying the statement (SP1) or (SP2) in Proposition \ref{weakcriterion}. Note that $m_{ii}>0$ if and only if $r_{i-1}+r_{i}<d_{i}$. Therefore, for any $1\leq i\leq n-1$ satisfies $r_{i-1}+r_{i}<d_i$ and $r_i+r_{i+1}<d_{i+1}$, there must be $$M(w_Q(\mathbf{r}))_{[i],[i]}>0, M(w_Q(\mathbf{r}))_{[i+1],[i+1]}>0.$$ At this time the remaining conditions in (SP1) is $$((i,i),(i+1,i+1))\in \mathcal{P}(w_Q(\mathbf{r^{(i)}}),w_Q(\mathbf{r})),$$ and this can be ensured once there is $$r'_{[i,i+1]}<r_{[i,i+1]}.$$

So, we denote $\Omega=\{1\leq i\leq n-1|r_{i-1}+r_{i}<d_i,r_i+r_{i+1}<d_{i+1}\}$, and for any $1\leq i\leq n-1$ define
$$r^{(i)}_{[a,b]}=\left\{\begin{array}{ll} r_{[a,a+1]}-1,& a=i\\
r_{[a,b]},&\mbox{else}\end{array}\right.$$Then $\mathbf{r^{(i)}}$ is a rank parameter (Corollary \ref{coroMwr}). And by Proposition \ref{weakcriterion} (SP1), $\mathcal{O}_{\mathbf{r^{(i)}}}$ is singular in $\overline{\mathcal{O}_{\mathbf{r}}}$ if $i\in \Omega$. Obviously, such singularity must be maximal singularity of $\overline{\mathcal{O}_{\mathbf{r}}}$.

Next, for any $1\leq i< n-1$ we define $$r^{(i,i+1)}_{[a,b]}=\left\{\begin{array}{ll} r_{[a,a+1]}-1,& a=i\mbox{ or }i+1\\
r_{[a,b]},&\mbox{else}\end{array}\right.$$then $\mathbf{r^{(i,i+1)}}$ is a rank parameter as well. Note that $$M(w_Q(\mathbf{r^{(i,i+1)}}))_{[i],[i]}=M(w_Q(\mathbf{r}))_{[i],[i]}+1>0,$$and for the position $(i,i)$ the first inequality in (\ref{inequ}) is strictly `$<$', i.e. $(i,i),(i,i+2),(i+1,i+1)$ play the roles of $(x,y),(x_1,y_1),(x_2,y_2)$ in Proposition \ref{weakcriterion} (SP2), we obtain that $\mathcal{O}_{\mathbf{r^{(i,i+1)}}}$ is singular in $\overline{\mathcal{O}_{\mathbf{r}}}$ for all $1\leq i< n-1$. Note that $\mathcal{O}_{\mathbf{r^{(i,i+1)}}}\subset \overline{\mathcal{O}_{\mathbf{r^{(i)}}}}$ if $i\in \Omega$ or $i+1\in \Omega$, so we only need to consider $\mathcal{O}_{\mathbf{r^{(i,i+1)}}}$ for $i,i+1\notin \Omega$ as the candidates for maximal singularities of $\overline{\mathcal{O}_{\mathbf{r}}}$.

Actually, now we have found all the maximal singularities of $\overline{\mathcal{O}_{\mathbf{r}}}$, they are $\mathcal{O}_{\mathbf{r^{(i)}}}$ for $i\in \Omega$ and $\mathcal{O}_{\mathbf{r^{(i,i+1)}}}$ for $i,i+1\notin \Omega$. For any $\mathbf{r'}\leq \mathbf{r}$, if $\mathbf{r'}_{[i,i+1]}<\mathbf{r}_{[i,i+1]}$ for some $i\in \Omega$ then $\mathcal{O}_{\mathbf{r'}}\subset \overline{\mathcal{O}_{\mathbf{r^{(i)}}}}$; if $\mathbf{r'}_{[i,i+1]}<\mathbf{r}_{[i,i+1]}$ and $\mathbf{r'}_{[i+1,i+2]}<\mathbf{r}_{[i+1,i+2]}$ for some $1\leq i <n-1$ then $\mathcal{O}_{\mathbf{r'}}\subset \overline{\mathcal{O}_{\mathbf{r^{(i,i+1)}}}}$; in the rest cases, $\{1\leq i\leq n-1|r'_{[i,i+1]}<r_{[i,i+1]}\}$ intersects $\Omega$ in empty and contains no consecutive integers, by Proposition \ref{weakcriterion}, this $\mathcal{O}_{\mathbf{r'}}$ is smooth in $\overline{\mathcal{O}_{\mathbf{r}}}$. Therefore, all the maximal singularities are determined.
\end{examp}

In order to determine the singular loci of Type A quiver loci, we can also combine the results of Section \ref{G/BKL} with the conclusions in \cite{maxsingSL/B} on $G/B$ and search for the bad patterns of type (4231) or (3412) in Zelevinsky permutations. However, as mentioned earlier, the direct and rough application of these results may compromise some of the valuable data we prefer to obtain.  Therefore, it is more appropriate to conduct specialized research and summarization on the specific form of Zelevinsky permutations. We will present this content in a new article. In fact, the patterns (SP1) and (SP2) proposed here are respectively associated with the patterns of type (4231) and (3412) in \cite{maxsingSL/B}. The computational complexity of finding the singular loci of type A quiver loci will sharply increase with the growth of quiver vertices and the dimensions in $\mathbf{d}$. For instance, it is readily apparent from the method of searching for patterns (4231) and (3412) in permutations: the number of vertices effects the number of nesting spaces in a flag in $G/P_Q$ directly. Hence, using our Zelevinsky map for arbitrary orientations has obvious advantage than using K-R connections when we deal with these problems.

\subsection{Generalization for type D quiver}\label{gentoD}
In this section, we will provide an example for the generalization of Zelevinsky map for type D quiver. In \cite{bobinski_schubert_2002}, G. Bobiński and G. Zwara proved that the type of singularities of type D quiver loci are equivalent to the types of singularities of Schubert varieties in double Grassmannians. In \cite{kinser_type_2021}, Kinser and Rajchgot developed a generalized Zelevinsky map for type D quiver loci, showing that, up to an affine factor, every type D quiver locus is isomorphic to a linear slice of a Schubert variety in the double Grassmannian. Their approach is highly technical but, regrettably, exhibits significant inconsistencies with the type A cases in some aspects. Of course, this inconsistency is partly inherited from the choice of the double Grassmannian as the corresponding structure for the type D cases in \cite{bobinski_schubert_2002}. Why does Type A correspond to `flags' while Type D corresponds to double `Grassmannians'? This issue has created a significant divide in Kinser-Rajchgot's construction for types A and D, and it is not convenient for researchers to extend the Zelevinsky map to other quivers of Dynkin type, even when simply seeking objects to establish an equivalence of singularity. In this part, we will generalize the type A Zelevinsky map to type D, providing an example of subvarieties generalizing the Kazhdan-Lusztig varieties in a homogeneous space of special flags, which is isomorphic to type D quiver loci.

The homogeneous space we selected is closely related to the double Grassmannian as well. Compared to the isomorphism of Kinser and Rajchgot, this new isomorphism exhibits stronger consistency with type A and is a direct isomorphism without the need for smooth factors or taking open subvarieties. It is also more easily generalizable to the rest of the Dynkin types. Due to space constraints, we only provide here this example to demonstrate this method, with the main purpose still being to prove the application potential of the main content (i.e., the type A Zelevinsky map) in various aspects.

Let $Q$ be the type D quiver $$\begin{tikzcd}
1 \arrow[rd, "\beta_1"']     &                         &   &                         &                                               &                            \\
                             & 2 \arrow[r, "\beta_2"'] & 3 & 4 \arrow[l, "\alpha_1"] & 5 \arrow[l, "\alpha_2"] \arrow[r, "\beta_3"'] & 6  \\
1' \arrow[ru, "\beta_{1'}"'] &                         &   &                         &                                               &
\end{tikzcd},$$ and for any fixed dimension vector $\mathbf{d}$ the representations $V\in rep_Q(\mathbf{d})$ are presented as
$$\begin{tikzcd}
V_1 \arrow[rd, "\beta_1"', "B_1"]     &                         &   &                         &                                               &   &                         \\
                             & V_2 \arrow[r, "\beta_2"', "B_2"] & V_3 & V_4 \arrow[l, "\alpha_1", "A_1"'] & V_5 \arrow[l, "\alpha_2", "A_2"'] \arrow[r, "\beta_3"',"B_3"] & V_6  \\
V_{1'} \arrow[ru, "\beta_{1'}"', "B_{1'}"] &                         &   &                         &                                               &   &
\end{tikzcd},$$ Let $\tilde{Q}$ be the type A quiver$$\begin{tikzcd}
 1 \arrow[r ]    & 2 \arrow[r ] & 3 & 4 \arrow[l ] & 5 \arrow[l ] \arrow[r ] & 6
\end{tikzcd},$$  According to a type D quiver representation $V=(A_\cdot,B_\cdot)$ we define three type A quiver representations of $\tilde{Q}$ by $$\begin{tikzcd}
         V^{(1)}:=  V_{1} \arrow[r, "B_1"] & V_2 \arrow[r, "B_2"] & V_3 & V_4 \arrow[l, "A_1"'] & V_5 \arrow[l, "A_2"'] \arrow[r, "B_3"] & V_6
           \end{tikzcd}$$
           $$\begin{tikzcd}
         V^{(1')}:=  V_{1'} \arrow[r, "B_{1'}"] & V_2 \arrow[r, "B_2"] & V_3 & V_4 \arrow[l, "A_1"'] & V_5 \arrow[l, "A_2"'] \arrow[r, "B_3"] & V_6
           \end{tikzcd}.$$
           $$\begin{tikzcd}
         V^{(0)}:=  V_1\oplus V_{1'} \arrow[r, "{[B_1,B_{1'}]}"] & V_2 \arrow[r, "B_2"] & V_3 & V_4 \arrow[l, "A_1"'] & V_5 \arrow[l, "A_2"'] \arrow[r, "B_3"] & V_6
           \end{tikzcd}.$$Recall that the vertices of this type A quiver are $1,2,\cdots,6$, here we just put $V^{(1)}_{1}=V_1, V^{(1')}_{1}=V_{1'},V^{(0)}_{1}=V_1\oplus V_{1'}$. The $\mathrm{GL}(\mathbf{d})$-orbit (closure) of $V$ is featured by the rank parameter $\mathbf{r}$ that computed by Kinser and Rajchgot in \cite{kinser_type_2021}, consisting of ranks of the union of \begin{equation}\begin{array}{c} M_{[a,b]}(V^{(1)}), M_{[a,b]}(V^{(1')}),M_{[a,b]}(V^{(0)}), 1\leq a<b\leq n,\\\mbox{and } 
           M_{[2|3]}(V)=\left[\begin{matrix}
                                 B_2 B_1& 0\\
                                 B_1 & B_{1'}
                               \end{matrix}\right],
                               M_{[2|4]}(V)= \left[\begin{matrix}
                               A_1  &B_2B_1&0\\
                               0 &B_1& B_{1'}
                               \end{matrix}\right],\\
                               M_{[2|5]}(V)=\left[\begin{matrix}
                                 A_1A_2 & B_2B_1  &0\\
                                 0&B_1&B_{1'}\\
                               \end{matrix}\right],
                                M_{[2|6]}(V)=\left[\begin{matrix}
                                 B_3 & 0 &0 \\
                                 A_1A_2 &B_2B_1&0 \\
                                  0&B_1 & B_{1'}
                               \end{matrix}\right],\\
                               M_{[3|4]}(V)=\left[\begin{matrix}
                                 A_1 & B_2B_1 &0 \\
                                 A_1 & 0   & B_2B_{1'}
                               \end{matrix}\right],
                               M_{[3|5]}(V)=\left[\begin{matrix}
                                 A_1A_2& B_2 B_1& 0  \\
                                  A_1A_2 &0& B_2B_{1'}
                               \end{matrix}\right],\\
                               M_{[3|6]}(V)=\left[\begin{matrix}
                               B_3 & 0& 0\\
                                 A_1A_2 &B_2 B_1 & 0 \\
                                 A_1A_2 &0 &B_2B_{1'}\\
                               \end{matrix}\right],\\
                               M_{[4|5]}(V)=\left[\begin{matrix}
                                A_1A_2 &A_1 & B_2 B_1 & 0 \\
                               0 & A_1 &0 &B_2B_{1'}
                               \end{matrix}\right],\\
                               M_{[4|6]}(V)=\left[\begin{matrix}
                               B_3    &0   & 0 & 0\\
                                A_1A_2 &A_1 &B_2 B_1 & 0\\
                                    0  &A_1 &0       &B_2B_{1'}
                               \end{matrix}\right],\\M_{[5|6]}(V)=\left[\begin{matrix}
                                 B_3  &0& B_3 &0 \\
                                 A_1A_2 &B_2B_1& 0& 0 \\
                                 0 & 0 & B_2B_{1'} & A_1A_2
                               \end{matrix}\right].\end{array}\end{equation}
Here we directly present these matrices without discussing the general situations, and this result can be checked by applying \cite{kinser_type_2021} to this specific example. To sum up, the $\mathrm{GL}(\mathbf{d})$-orbit of $V$ is controlled by the ranks of\begin{equation}\label{Drankpar}\left\{\begin{array}{l}
                                                          M_{[a,b]}(V^{(1)}),1\leq a<b\leq 6\\
                                                          M_{[1,b]}(V^{(1')}),2\leq b\leq 6\\
                                                          M_{[1,b]}(V^{(0)}),2\leq b\leq 6\\
                                                          M_{[a|b]}(V),2\leq a<b\leq 6\end{array}\right.
                                                        \end{equation}

Now, let $$S'_Q=1\prec 1'\prec 2\prec 5\prec 6\prec 4\prec 3$$and $$S_Q=5\prec 4\prec 1\prec 1'\prec 2\prec 3\prec 6.$$ In the rest of this section, given a $(S'_Q,S_Q)$-blocked matrix $A$ we define $A^0{[x]\times[1']}$ to be matrix obtained by replacing column $[1]$ in $A_{[x]\times[1]}$ by the column $[1']$ of $A$. We define $A^{[x]}_{[y]}, x\prec y\in S'_Q$ by
$$A^{[x]}_{[y]}=\left[\begin{array}{c:c}
          A_{[x]\times[1]}\backslash A_{[q]\times[1]} & \multirow{2}{*}{$A^0{[x]\times[1']}$} \\\cdashline{1-1}
          0 & \\\cdashline{1-2}
          A_{[y]\times[1]}&0
        \end{array}\right],$$where $A_{[x]\times[1]}\backslash A_{[y]\times[1]}$ is the submatrix of $A_{[p]\times[1]}$ obtained by removing $A_{[q]\times[1]}$.

Let $N=\sum_{x\in Q_0} d_x$, $G=\mathrm{GL}(N)$ and $P$ be the parabolic group of block upper triangular $(S_Q,S_Q)$-blocked matrices. Let $H$ be the subgroup of $P$ consisting of $A\in P$ with $A_{[1],[1']}=0$. Then $G/H$ can be identified with the homogeneous space of special flags $$0\subset V_{(1)}\subset V_{(2)}\subset \begin{array}{c}V_{(3)}\\V'_{(3)}\end{array}\subset V_{(4)}\subset V_{(5)}\subset V_{(6)}=k^N$$ satsifying $$V_{(3)}/V_{(2)}\cap V'_{(3)}/V_{(2)}=0$$ with $$\dim V_{(1)}=d_5,\dim V_{(2)}=d_5+d_4,\dim V_{(3)}=d_5+d_4+d_1,\dim V'_{(3)}=d_5+d_4+d_{1'},$$ $$\dim V_{(4)}=d_5+d_4+d_1+d_{1'}+d_2,\dim V_{(5)}=d_5+d_4+d_1+d_{1'}+d_2+d_3.$$

The upper triangular Borel subgroup $B$ of $G$ acts on $G/H$ by natural product. By Gauss elimination, every representative $u$ of the double coset decomposition $G=\cup BuH$ can be determined by a permutation $w\in W^P$ and some pairs $\{(s_1,t_1),(s_2,t_2),\cdots\}$ satisfying that $1\leq s_p<d_1$ are pairwise distinct,  $1\leq t_p<d_{1'}\}$ are pairwise distinct and $w_{d_5+d_4+s_p}>w_{d_5+d_4+d_1+t_p}$. The $(i,j)$-entry $u_{ij}$ of $u$ is
$$\left\{\begin{array}{ll}
           1, & \mbox{if $w_{ij}=1$} \\
           1, & \mbox{if $(i,j)=(w_{d_5+d_4+s_p},d_5+d_4+d_1+t_p)$, }\\
           0, &\mbox{else}
         \end{array}\right.$$

Let $v_Q$ be the matrix $$v_Q=\begin{blockarray}{lccccccc}\begin{block}{l[ccccccc]}
                 1 & 0  & 0 &I & 0 &0 & 0&0 \\
                1' & 0  & 0 & 0 & I&0 &0 &0 \\
                 2 &  0 & 0 & 0 & 0 &I& 0& 0\\
                 5 & I & 0 & 0 & 0 &0 & 0& 0\\
                 6 & 0  & 0 & 0 & 0 & 0& 0& I\\
                 4 & 0  & I& 0 & 0 & 0& 0& 0\\
                 3 & 0  &0  & 0 & 0 & 0&I& 0\\
                  \end{block}
                   &5  &4 &1 &1'&2&3&6  \end{blockarray}$$
then the $B^-$-orbit $B^-v_QH/H\subset G/H$ can be identified with affine space of matrices like $$\begin{blockarray}{lccccccc}\begin{block}{l[ccccccc]}
                 1 & 0  & 0 &I  & 0 &0 & 0&0 \\
                1' & 0  & 0 & * & I &0 &0 &0 \\
                 2 &  0 & 0 & * & * &I & 0& 0\\
                 5 & I  & 0 & 0 & 0 &0 & 0& 0\\
                 6 & *  & 0 & * & * & *& 0& I\\
                 4 & *  & I & 0 & 0 & 0& 0& 0\\
                 3 & *  & * & * & * & *&I & 0\\
                  \end{block}
                   &5  &4   &1  &1 '& 2&3 &6  \end{blockarray}.$$ In this matrix space the intersection $\overline{BuH/H}\cap B^-v_QH/H$ has a defining ideal generated by minors of the generic matrix $Z$. If the representative $u$ satisfies certain maximal conditions (just like conditions of Z-type permutation, we do not present them here), the generators of defining ideal can be simplified to the union of
                    \begin{equation}\label{BuH}\left\{\begin{array}{l}\mbox{minors of size $u_{[x]\times[y]}+1$ in $Z_{[x]\times[y]}$},x,y\in Q_0\\
                    \mbox{minors of size $u^0_{[x]\times[1']}+1$ in $Z^0_{[x]\times[1']}$},x\in Q_0\\
                    \mbox{minors of size $u^{[x]}_{[y]}+1$ in $Z^{[x]}_{[y]}$},x\prec y\in S'_Q.\end{array}\right.\end{equation}

We define a map $\zeta_Q$ sending $V=(A_\cdot,B_\cdot)\in rep_Q(\mathbf{r})$ to matrix
 $$\zeta_Q(V)=\begin{blockarray}{lccccccc}\begin{block}{l[ccccccc]}
                 1 & 0  & 0 &I  & 0 &0 & 0&0 \\
                1' & 0  & 0 & 0 & I &0 &0 &0 \\
                 2 &  0 & 0 & B_1 & B_{1'} &I & 0& 0\\
                 5 & I  & 0 & 0 & 0 &0 & 0& 0\\
                 6 & B_3& 0 & 0 & 0 &0 & 0& I\\
                 4 & A_2 & I & 0 & 0 & 0& 0& 0\\
                 3 & 0  & A_1 & B_2B_1 & B_2B_{1'} & B_2&I & 0\\
                  \end{block}
                   &5  &4   &1  &1 '& 2&3 &6  \end{blockarray}.$$Then actually (the restriction of) $\zeta_Q$ provides an isomorphism from any $\overline{\mathcal{O}_{V}}$ to some $\overline{BuH/H}\cap B^-v_QH/H$. We define $\lambda_Q(x)=\lambda_{\tilde{Q}}(x),\mu_Q(x)=\mu_{\tilde{Q}}(x)$ for all $x$. One may observe that the conditions (\ref{Drankpar}) of rank parameter for type D quiver are parallel to the conditions (\ref{BuH}) according to the rules (in a way similar to Theorem \ref{main}): \begin{itemize}
                                   \item $M_{[a,b]}(V^{(1)})\leftrightarrow \zeta_Q(V)_{[\mu_Q(b)]\times[\lambda_Q(a)]}, 1\leq a<b\leq 6$;
                                   \item $M_{[1,b]}(V^{(1')})\leftrightarrow \zeta_Q(V)^0_{[\mu_Q(b)]\times[1']}, 2\leq b\leq 6$;
                                   \item $M_{[1,b]}(V^{(0)})\leftrightarrow \zeta_Q(V)_{[\mu_Q(b)]\times[1']},2\leq b\leq 6$;
                                   \item $M_{[a|b]}(V)\leftrightarrow \zeta_Q(V)^{[x]}_{[y]}$, where $x$ is the northern one in $\{\mu_Q(a),\mu_Q(b)\}$ and $y$ is the southern one, i.e. $\ind_{S'_Q}(x)=\min\{\ind_{S'_Q}(\mu_Q(a)),\ind_{S'_Q}(\mu_Q(b))\}$ and $\ind_{S'_Q}(y)=\max\{\ind_{S'_Q}(\mu_Q(a)),\ind_{S'_Q}(\mu_Q(b))\}$, $1\leq a<b\leq 6$.
                                 \end{itemize}
                   For example, $M_{[1,4]}(V^{(0)})\leftrightarrow \zeta_Q(V)_{[3]\times[1']}$, that is $$\zeta_Q(V)_{[3]\times[1]}=[0,A_1,B_2B_1,B_2B_{1'}]=[0,M_{[1,4]}(V^{(0)})];$$
                   $M_{[2|4]}(V)\leftrightarrow \zeta_Q(V)^{[2]}_{[3]}$, where $\mu_Q(2)=2,\mu_Q(4)=3$ and $2\prec 3$ in $S'_Q$, that is
                   $$\zeta_Q(V)^{[2]}_{[3]}=\left[\begin{matrix}
                                                    0 & 0 & B_1 & 0 & 0 & B_{1'} \\
                                                    I & 0 & 0 & I & 0 & 0 \\
                                                    B_3 & 0 & 0 & B_3 & 0 & 0 \\
                                                    A_2 & I & 0 & A_2 & I & 0 \\
                                                    0 & 0 & 0 & 0 & A_1 & B_2B_{1'}\\
                                                    0 & A_1 & B_2B_{1'} & 0 & 0 & 0 \\
                                                  \end{matrix}\right]$$$$\xrightarrow[transformation]{elementary}
                                                  \left[\begin{matrix}
                                                    0 & 0 & B_1 & 0 & 0 & B_{1'} \\
                                                    I & 0 & 0 & I & 0 & 0 \\
                                                    0 & 0 & 0 & 0 & 0 & 0 \\
                                                    0 & I & 0 & 0 & I & 0 \\
                                                    0 & -A_1 & -B_2B_{1'} & 0 & 0 & 0\\
                                                    0 & A_1 & B_2B_{1'} & 0 & 0 & 0 \\
                                                  \end{matrix}\right]\rightarrow \left[\begin{matrix}
                                                                                         M_{[2|4]}(V) &  &  \\
                                                                                          & I &  \\
                                                                                          & & 0
                                                                                       \end{matrix}\right];$$
                   $M_{[3|5]}(V)\leftrightarrow \zeta_Q(V)^{[5]}_{[4]}$, that is
                    $$\zeta_Q(V)^{[5]}_{[4]}=\left[\begin{matrix}
                                                     I & 0 & 0 & I & 0 & 0 \\
                                                     B_3 & 0 & 0 & B_3 & 0 & 0 \\
                                                     0 & 0 & 0 & A_2 & I & 0 \\
                                                     0 & 0& 0 & 0 & A_1 & B_2B_{1'} \\
                                                     A_2 & I & 0 & 0 & 0 & 0 \\
                                                     0& A_1 & B_2B_1 & 0 & 0 &
                                                   \end{matrix}\right]$$
                                                   $$\rightarrow \left[\begin{matrix}
                                                     I & 0 & 0 & I & 0 & 0 \\
                                                     0 & 0 & 0 & 0 & 0 & 0 \\
                                                     0 & 0 & 0 & 0 & I & 0 \\
                                                     A_1A_2 & 0& 0 & 0 & 0 & B_2B_{1'} \\
                                                     0 & I & 0 & 0 & 0 & 0 \\
                                                     -A_1A_2 & 0 & B_2B_1 & 0 & 0 &0
                                                   \end{matrix}\right]\rightarrow\left[\begin{matrix}
                                                                                         M_{[3|5]}(V) &  &  \\
                                                                                          & I &  \\
                                                                                         &  & 0
                                                                                       \end{matrix}\right].$$

Actually, this correspondence reveals the isomorphism between a type D quiver locus $\overline{\mathcal{O}_{V}}$ and some $\overline{BuH/H}\cap Bv_QH/H$. Therefore, it allows us to develop more geometric results on type D quiver loci through this isomorphism, similar to our discussion of type A in other parts of this section, in future.
\bibliographystyle{plain}
\bibliography{ref}
\end{document}